\definecolor{myurlcolor}{rgb}{0.6,0,0}
\definecolor{mycitecolor}{rgb}{0,0,0.8}
\definecolor{myrefcolor}{rgb}{0,0,0.8}
\newcommand{\dsp}{\hspace{3em}}
\newcommand{\www}{.95}
\newcommand{\hhh}{.95}
\newcommand{\dwscale}{1.06}
\newcounter{dwfig}
\newcommand{\C}{{\mathscr C}}
\renewcommand{\P}{{\mathcal P}}
\newcommand{\A}{{\mathscr A}}
\newtheorem{theorem}{Theorem}
\newtheorem{lemma}{Lemma}
\newtheorem{proposition}{Proposition}
\newtheorem*{corollary}{Corollary}
\theoremstyle{definition}
\newtheorem{definition}{Definition}
\theoremstyle{remark}
\newtheorem*{remark}{Remark}
\newcommand{\lpar}{{\textnormal(}}
\newcommand{\rpar}{\!{\textnormal)}}
\date{}
\tikzset{block/.style={text centered}}
\newcommand{\dwcred}{red}
\newcommand{\dwcorange}{orange}
\newcommand{\dwcgreen}{green}
\newcommand{\dwcblue}{blue}
\newcommand{\dwcviolet}{violet}
\newcommand{\dwcpurple}{purple}
\newcommand{\dwcteal}{teal}
\newcommand{\dwccyan}{cyan}
\newcommand{\pipice}{

\pgfmathsetmacro{\hhh}{\dwscale*2.38}

 \pgfmathsetmacro{\l}{\linewidth/28.440}%
 \pgfmathsetmacro{\h}{\l/2.65}%

 \pgfmathsetmacro{\ww}{\www/2}%
 \pgfmathsetmacro{\hh}{\hhh/2 + .015}%
 \pgfmathsetmacro{\rrr}{\hhh/4.03}%

\begin{scope}[shift = {(.766in,-.979in)}]

\begin{scope}[rotate = 0]

\begin{scope}[shift = {(\ww in,\hh in)}]

\draw[line width = .75pt, \dwcblue] (0,0) circle (\rrr in);


\draw[] (30: \dwscale*3)--(90:\rrr in);
\draw[] (-30: \rrr in)--(30:\dwscale*3);
\draw[] (0,0) --(90: \rrr in);
\draw[] (0,0) --(-30: \rrr in);

\pgfmathsetmacro{\y}{\rrr/1.735}
\pgfmathsetmacro{\z}{-\y}

 \foreach \d in {30}
 {
\begin{scope}[shift = {(\d:\rrr in)}] 
\draw[rotate = \d] (0, \z in) -- (0, \y in);
\end{scope}
}

\foreach \d in {90, 30, 330}
{
\draw[fill = black] (\d:\rrr in) circle (.25ex);
}

\draw[] (0,0) -- (30: 2*\rrr in);

\draw[] (90:\rrr in) -- (-30: \rrr in);

\draw[\dwccyan] (0, 0) -- (60: 2*\y in);

\draw[\dwccyan] (0,0) -- (0: 2*\y in);

\draw[fill = black] (0,0) circle (.4ex);

\begin{scope}[shift = {(30: \rrr in)}]
\draw[rotate = -60] (0,0) rectangle (45: 2.5ex);
\end{scope}

\begin{scope}[shift = {(30: .5*\rrr in)}]
\draw[rotate = -60] (0,0) rectangle (45: 2.5ex);
\end{scope}

\begin{scope}[shift = {(0, 2ex)}]
\node[black,  inner sep = 0pt] at (30:2*\rrr in) {$A$};
\end{scope}

\begin{scope}[rotate = 30]
\foreach \d in {30, 330}
{ 
\draw[black, fill = black] (\d:1.154*\rrr in) circle (.25ex);
}
\end{scope}

\draw[black, fill = black] (30:2*\rrr in) circle (.25ex);
\draw[black, fill = black] (30:.5*\rrr in) circle (.25ex);

\begin{scope}[shift ={(0,2ex)}]
\node[black,  inner sep = 0pt] at (60:1.154*\rrr in) {$D$};
\end{scope}

\begin{scope}[shift ={(-30:2ex)}]
\node[black,  inner sep = 0pt] at (360:1.154*\rrr in) {$E$};
\end{scope}

\node[shift = {(0, 2ex)}, black,  inner sep = 0pt] at (90: \rrr in) {$B$};

\node[shift = {(0, -2ex)}, black,  inner sep = 0pt] at (0: 0) {$O$};

\node[inner sep = .1pt, shift ={(-30:2ex)}, black, fill = white] at (-30: \rrr in) {$C$};

\node[shift = {(70: 2.5ex)}, black,  inner sep = 0pt] at (30: \rrr in) {$M$};

\node[shift = {(70: 2.5ex)}, black,  inner sep = 0pt] at (30: .5*\rrr in) {$F$};

\end{scope}

\end{scope}

\end{scope}

}
\newcommand{\pipicf}{

\pgfmathsetmacro{\hhh}{\dwscale*2.38}

 \pgfmathsetmacro{\l}{\linewidth/28.440}%
 \pgfmathsetmacro{\h}{\l/2.65}%

 \pgfmathsetmacro{\ww}{\www/2}%
 \pgfmathsetmacro{\hh}{\hhh/2}%
 \pgfmathsetmacro{\rrr}{\hhh/2.1}%
\begin{scope}[shift = {(.1in, -1.142in)}]

\begin{scope}[shift = {(\ww in,\hh in)}]

\draw[line width = .75pt, \dwcblue] (20: \rrr in) arc (20:190:\rrr in);

\pgfmathsetmacro{\newr}{1/cos(60)}
\pgfmathsetmacro{\newrr}{1/cos(20)}
\pgfmathsetmacro{\hyp}{\rrr*\newr}
\pgfmathsetmacro{\nhyp}{\rrr*\newrr}
\pgfmathsetmacro{\a}{1/cos(30)}
\pgfmathsetmacro{\b}{\rrr*\a}
\pgfmathsetmacro{\length}{\b - \rrr}
\pgfmathsetmacro{\c}{1/cos(15)}
\pgfmathsetmacro{\crrr}{\rrr*\c}
\pgfmathsetmacro{\d}{sin(15)}
\pgfmathsetmacro{\drrr}{\rrr*\d}

%
\draw[style={shorten >=-.2in}, black, rotate = 45] (0: \rrr in)--(60: \hyp in);      
\draw[black, rotate = 45] (0: \rrr in)--(340:\nhyp in);

%
\draw[black, rotate = 45] (120: \rrr in)--(0: 0);  

%
\draw[style={shorten >=-.2in}, black, rotate = 45] (120: \rrr in)--(60: \hyp in);
\draw[black, rotate = 45] (120: \rrr in)--(140:\nhyp in);

%
\draw[black, rotate = 45] (0, 0)--(60: \hyp in);

%
\draw[line width = .75pt, \dwcred, rotate = 45] (60: \rrr in)--(60: \hyp in);

%
\draw[black, rotate = 0] (105: \rrr in)--(135: \b in); 

%
\draw[black, rotate = 45] (0,0) -- (90: \b in);

%
\draw[\dwcred, line width = .75pt, rotate = 45] (90:\rrr in) -- (90: \b in);

%
\draw[black, rotate = 45] (0: \rrr in)--(120: \rrr in);

%
\draw[black, rotate = 45] (60: \rrr in) -- (120: \rrr in);

%
\draw[black, rotate = 45] (120: \rrr in) -- (90: \rrr in);

%
\draw[black, rotate = 45] (60: \rrr in) -- (90: \rrr in);

%
\draw[\dwcviolet, line width = .75pt, rotate = 45] (90: \rrr in) -- (75: \crrr in);

%
\draw[\dwcviolet,  line width = .75pt, rotate = 45] (90: \rrr in) -- (105: \crrr in);

%
\draw[\dwcviolet,  line width = .75pt] (120: \crrr in) -- (105: \b in);


\begin{scope}[shift = {(120: \crrr in)}]

%
\draw[] (0,0) -- (135: 2*\length in);

\begin{scope}[shift = {(45:\drrr in)}]

%
\draw[] (0,0) -- (135: 2.97*\length in);

\begin{scope}[shift = {(45:.195*\drrr in)}]

%
\draw[black!30!\dwcgreen, line width = .75pt] (0,0) -- (135: .488*\rrr in);

\end{scope}

\end{scope}

\end{scope}



\draw[black, fill = black] (0, 0) circle (.4ex);  
\node[shift = {(2ex, 0)}] at (0,0) {$O$};

\draw[black, fill = black] (45: \rrr in) circle (.3ex);
\node[shift = {(2ex, 0)}] at (45: \rrr in) {$H$};

\draw[black, fill = black] (165: \rrr in) circle (.3ex);
\node[inner sep = 0pt, shift = {(-2ex, 0)}] at (165: \rrr in) {$A$};

\draw[black, fill = black] (105: \hyp in) circle (.3ex);
\node[shift = {(2ex, 0)},  inner sep = 10pt] at (105: \hyp in) {$G$};

\draw[black, fill = black, rotate = 45] (60: .5*\hyp in) circle (.3ex);
\begin{scope}[rotate = 45]
\node[shift = {(2ex, -1ex)}] at (60: .5*\hyp in)  {$N$};
\end{scope}

\begin{scope}[rotate = 45]
\draw[black, fill = black] (90: .5*\hyp in) circle (.3ex);
\node[shift = {(-1.5ex, 0)}] at (90: .5*\hyp in) {$I$};
\end{scope}

\draw[black, fill = black, rotate = 45] (90: \b in) circle (.3ex);
\begin{scope}[rotate = 45]
\node[shift = {(-2ex, 0)}] at (90: \b in) {$C$};
\end{scope}

\draw[black, fill = black] (120: \crrr in) circle (.3ex);
\node[shift = {(-2ex, .5ex)}] at (120: \crrr in) {$J$};

\draw[black, fill = black] (150: \crrr in) circle (.3ex);
\node[shift = {(-2ex, 0)}] at (150: \crrr in) {$B$};

\draw[black, fill = black] (105: \b in) circle (.3ex);
\node[shift = {(2ex, 0)}] at (105: \b in) {$L$};

\draw[black, fill = black] (105: .5*\rrr in) circle (.3ex);
\node[shift = {(2ex, -1ex)}] at (105: .5*\rrr in) {$P$};

\draw[black, fill = black] (135: .866*\rrr in) circle (.3ex);
\node[shift = {(2.5ex, 0)}] at (135: .866*\rrr in) {$M$};


\begin{scope}[shift = {(120: \crrr in)}]
\begin{scope}[rotate = 45]

\draw[black, fill = black] (\drrr in, 0) circle (.3ex);
\node[shift = {(-2ex, 0)}] at (\drrr in, 0) {$K$};
\end{scope}
\end{scope}

\begin{scope}[shift = {(120: \crrr in)}]

\draw[black, fill = black] (135: 2*\length in) circle (.3ex);
\node[shift = {(-2ex, 0)}] at (135: 2*\length in) {$D$};

\begin{scope}[shift = {(45:\drrr in)}]

\draw[black, fill = black] (135: 2.97*\length in) circle (.3ex);
\node[shift = {(-2ex, 0)}] at (135: 2.97*\length in) {$E$};

\begin{scope}[shift = {(45:.195*\drrr in)}]

\draw[black, fill = black] (135: .488*\rrr in) circle (.3ex);
\node[shift = {(2ex, .5ex)}] at (135: .488*\rrr in) {$F$};

\end{scope}

\end{scope}

\end{scope}

\end{scope}


\end{scope}

}
\newcommand{\pipicg}{

 \pgfmathsetmacro{\l}{\linewidth/28.440}%
 \pgfmathsetmacro{\h}{\l/2.65}%

 \pgfmathsetmacro{\ww}{\www/2}%
 \pgfmathsetmacro{\hh}{\hhh/2}%
 \pgfmathsetmacro{\rrr}{\dwscale}%

\pgfmathsetmacro{\newr}{1/cos(55)}
\pgfmathsetmacro{\newrr}{1/cos(32.5)}
\pgfmathsetmacro{\hyp}{\rrr*\newr}
\pgfmathsetmacro{\nhyp}{\rrr*\newrr}
\pgfmathsetmacro{\a}{1/cos(30)}
\pgfmathsetmacro{\b}{\rrr*\a}
\pgfmathsetmacro{\length}{\b - \rrr}
\pgfmathsetmacro{\c}{1/cos(15)}
\pgfmathsetmacro{\crrr}{\rrr*\c}
\pgfmathsetmacro{\d}{sin(15)}
\pgfmathsetmacro{\drrr}{\rrr*\d}

\begin{scope}[shift = {(.17in, -1.29in)}]

\begin{scope}[shift = {(\ww in,\hh in)}]

\begin{scope}[rotate = 10]

\draw[line width = .75pt, \dwcblue] (20: \rrr in) arc (20:65:\rrr in);

\draw[dashed, line width = .75pt, \dwcblue] (65: \rrr in) arc (65:80:\rrr in);

\draw[line width = .75pt, \dwcblue] (80: \rrr in) arc (80:110:\rrr in);

\draw[dashed, line width = .75pt, \dwcblue] (110: \rrr in) arc (110:125:\rrr in);

\draw[line width = .75pt, \dwcblue] (125: \rrr in) arc (125:170:\rrr in);

%
\draw[shorten <=-30, shorten >=-20, black] (40: \rrr in)--(95: \hyp in);      
\draw[shorten <=-30, shorten >=-20, black] (150: \rrr in)--(95: \hyp in);
\draw[black] (85: \rrr in)--(117.5: \nhyp in); 
\draw[black] (40: \rrr in) -- (150: \rrr in);
\draw[black] (85: \rrr in) -- (150: \rrr in);
     

%


\draw[black, fill = black] (0, 0) circle (.4ex);  
\node[shift = {(2ex, 0)}] at (0,0) {$O$};

\draw[black, fill = black] (40: \rrr in) circle (.3ex);  
\node[shift = {(2ex,2.5ex)}, fill = white, inner sep = 1.5pt] at (40: \rrr in) {$P_{n+1}$};

\draw[black, fill = black] (60: \rrr in) circle (.3ex);  
\node[shift = {(0, -2.5ex)}, fill = white, inner sep = 1.5pt] at (60: \rrr in) {$P_n$};

\draw[black, fill = black] (85: \rrr in) circle (.3ex);  
\node[shift = {(0, -2.5ex)}, fill = white, inner sep = 1.5pt] at (80: \rrr in) {$P_{m+1}$};

\draw[black, fill = black] (105: \rrr in) circle (.3ex);  
\node[shift = {(0,-2.5ex)}, fill = white, inner sep = 1.5pt] at (100: \rrr in) {$P_m$};

\draw[black, fill = black] (130: \rrr in) circle (.3ex);  
\node[shift = {(0, -2.5ex)}, fill = white, inner sep = 1.5pt] at (130: \rrr in) {$P_2$};

\draw[black, fill = black] (150: \rrr in) circle (.3ex);  
\node[shift = {(-2.5ex, 0)}, fill = white, inner sep = 1.5pt] at (150: \rrr in) {$P_1$};

\draw[black, fill = black] (117.5: \nhyp in) circle (.3ex);  
\node[shift = {(-2.5ex, 0)}] at (117.5: \nhyp in) {$A$};

\draw[black, fill = black] (95: \hyp in) circle (.3ex);  
\node[shift = {(-2.5ex, 0)},  inner sep = 14pt] at (95: \hyp in) {$B$};




\end{scope}

\end{scope}

\end{scope}

}
\newcommand{\pipich}{

 \pgfmathsetmacro{\l}{\linewidth/28.440}%
 \pgfmathsetmacro{\h}{\l/2.65}%

 \pgfmathsetmacro{\ww}{\www/2}%
 \pgfmathsetmacro{\hh}{\hhh/2}%
 \pgfmathsetmacro{\rrr}{\dwscale}%

\pgfmathsetmacro{\newr}{cos(65)}
\pgfmathsetmacro{\htt}{\rrr*\newr}

\pgfmathsetmacro{\newrr}{cos(35)}
\pgfmathsetmacro{\httt}{\rrr*\newrr}

\begin{scope}[shift = {(\ww in,\hh in)}]

\begin{scope}[shift = {(-.28in, -1.105in)}]

\begin{scope}[rotate = -21]

\draw[line width = .75pt, \dwcblue] (20: \rrr in) arc (20:70:\rrr in);

\draw[dashed, line width = .75pt, \dwcblue] (70: \rrr in) arc (75:87:\rrr in);

\draw[dashed, line width = .75pt, \dwcblue] (108: \rrr in) arc (108:120:\rrr in);

\draw[line width = .75pt, \dwcblue] (120: \rrr in) arc (120:170:\rrr in);


%
\draw[black] (30: \rrr in) -- (160: \rrr in);
\draw[black] (30: \rrr in) -- (60: \rrr in);
\draw[black] (160: \rrr in) -- (130: \rrr in);
\draw[black] (60: \rrr in) -- (130: \rrr in);

\draw[\dwccyan] (0,0) -- (30: \rrr in);

\draw[\dwccyan] (0,0) -- (60: \rrr in);

\draw[\dwcviolet] (0,0) -- (95: \httt in);

\draw[\dwccyan] (0,0) -- (130: \rrr in);

\draw[\dwccyan] (0,0) -- (160: \rrr in);



\draw[black, fill = black] (0, 0) circle (.4ex);  
\node[shift = {(-2ex, 0)}] at (0, 0) {$O$};

\draw[black, fill = black] (30: \rrr in) circle (.3ex);  
\node[shift = {(0: 2.5ex)}] at (30: \rrr in) {$B$};

\draw[black, fill = black] (60: \rrr in) circle (.3ex);  
\node[shift = {(30: 2.5ex)}] at (60: \rrr in) {$D$};

\draw[black, fill = black] (130: \rrr in) circle (.3ex);  
\node[shift = {(100: 2.5ex)}] at (130: \rrr in) {$C$};

\draw[black, fill = black] (160: \rrr in) circle (.3ex);  
\node[inner sep = 0pt, shift = {(130: 2.5ex)},  inner sep = 0pt] at (160: \rrr in) {$A$};

\draw[black, fill = black] (95: \htt in) circle (.3ex);  
\node[shift = {(25: 2.5ex)}] at (95: \htt in) {$E$};

\draw[black, fill = black] (95: \httt in) circle (.3ex);  
\node[shift = {(25: 2.5ex)}] at (95: \httt in) {$F$};


\end{scope}

\end{scope}

\end{scope}

}
\newcommand{\pipichh}{

 \pgfmathsetmacro{\l}{\linewidth/28.440}%
 \pgfmathsetmacro{\h}{\l/2.65}%

 \pgfmathsetmacro{\ww}{\www/2}%
 \pgfmathsetmacro{\hh}{\hhh/2}%
 \pgfmathsetmacro{\rrr}{\dwscale}%

\pgfmathsetmacro{\newr}{cos(65)}
\pgfmathsetmacro{\htt}{\rrr*\newr}

\pgfmathsetmacro{\newrr}{cos(35)}
\pgfmathsetmacro{\httt}{\rrr*\newrr}

\begin{scope}[shift = {(\ww in,\hh in)}]

\begin{scope}[shift = {(-.28in, -1.105in)}]

\begin{scope}[rotate = -21]

\draw[line width = .75pt, \dwcblue] (20: \rrr in) arc (20:170:\rrr in);


%
\draw[] (30: \rrr in) -- (160: \rrr in);
\draw[] (30: \rrr in) -- (95: \rrr in);
\draw[] (160: \rrr in) -- (95: \rrr in);

\draw[\dwccyan] (0,0) -- (30: \rrr in);

\draw[\dwccyan] (0,0) -- (95: \rrr in);

\draw[\dwccyan] (0,0) -- (160: \rrr in);



\draw[black, fill = black] (0, 0) circle (.4ex);  
\node[shift = {(-2ex, 0)}] at (0, 0) {$O$};

\draw[black, fill = black] (30: \rrr in) circle (.3ex);  
\node[shift = {(0: 2.5ex)}] at (30: \rrr in) {$B$};

\draw[black, fill = black] (160: \rrr in) circle (.3ex);  
\node[inner sep = 0pt, shift = {(130: 2.5ex)},  inner sep = 0pt] at (160: \rrr in) {$A$};

\draw[black, fill = black] (95: \htt in) circle (.3ex);  
\node[shift = {(25: 2.5ex)}] at (95: \htt in) {$E$};

\draw[black, fill = black] (95: \rrr in) circle (.3ex);  
\node[shift = {(65: 2.5ex)}] at (95: \rrr in) {$C$};


\end{scope}

\end{scope}

\end{scope}

}
\newcommand{\pipici}{

 \pgfmathsetmacro{\l}{\linewidth/28.440}%
 \pgfmathsetmacro{\h}{\l/2.65}%

 \pgfmathsetmacro{\ww}{\www/2}%
 \pgfmathsetmacro{\hh}{\hhh/2}%
 \pgfmathsetmacro{\rrr}{\dwscale}%

\pgfmathsetmacro{\newr}{1/cos(55)}
\pgfmathsetmacro{\newrr}{1/cos(44)}
\pgfmathsetmacro{\news}{1/cos(33)}
\pgfmathsetmacro{\newt}{1/cos(22)}
\pgfmathsetmacro{\newu}{1/cos(11)}

\pgfmathsetmacro{\newuu}{tan(44)}
\pgfmathsetmacro{\newtt}{tan(33)}
\pgfmathsetmacro{\newss}{tan(22)}
\pgfmathsetmacro{\newrrr}{tan(11)}

\pgfmathsetmacro{\hyp}{\rrr*\newr}
\pgfmathsetmacro{\hypp}{\rrr*\newrr}
\pgfmathsetmacro{\hyppp}{\rrr*\newrrr}
\pgfmathsetmacro{\hyps}{\rrr*\news}
\pgfmathsetmacro{\hypt}{\rrr*\newt}
\pgfmathsetmacro{\hypss}{\rrr*\newss}
\pgfmathsetmacro{\hyptt}{\rrr*\newtt}
\pgfmathsetmacro{\hypu}{\rrr*\newu}
\pgfmathsetmacro{\hypuu}{\rrr*\newuu}
\begin{scope}[shift = {(.184in, -1.29in)}]

\begin{scope}[shift = {(\ww in,\hh in)}]

\begin{scope}[rotate = 20]

\draw[line width = .75pt, \dwcblue] (20: \rrr in) arc (20:70:\rrr in);

\draw[dashed, line width = .75pt, \dwcred] (70: \rrr in) arc (70:100:\rrr in);


\draw[line width = .75pt, \dwcblue] (100: \rrr in) arc (100:170:\rrr in);

%
\draw[shorten <=-30, shorten >=-20, black] (40: \rrr in)--(95: \hyp in);      
\draw[shorten <=-30, shorten >=-20, black] (150: \rrr in)--(95: \hyp in);

\draw[\dwcviolet, shorten <=-\hyppp in] (62: \rrr in) -- (106:\hypp in);
\draw[\dwcorange, shorten <=-\hypss in] (84: \rrr in) -- (117:\hyps in); 
\draw[\dwcpurple, shorten <=-\hyptt in]  (106: \rrr in) -- (128: \hypt in);
\draw[\dwcgreen!50!black, shorten <=-\hypuu in] (128: \rrr in) -- (139: \hypu in);


\draw[\dwcteal] (0,0) -- (95: \hyp in);

\draw[black, fill = black] (51: \hypu in) circle (.2ex);  
\node[shift = {(4ex, 1ex)}, fill = white, inner sep = 1.5pt] at (51: \hypu in) {$P_{n,n-1}$};

\draw[black, fill = black] (62: \hypt in) circle (.2ex);  
\node[shift = {(3ex, 1ex)}, fill = white, inner sep = 1.5pt, rotate = -30] at (62: \hypt in) {$\dots$};

\draw[black, fill = black] (73: \hypu in) circle (.2ex);  

\draw[black, fill = black] (73: \hyps in) circle (.2ex);  
\node[shift = {(4ex, 1ex)}, fill = white, inner sep = 1.5pt] at (73: \hyps in) {$P_{n,3}$};

\draw[black, fill = black] (84: \hypt in) circle (.2ex);  
\node[font = \scriptsize, shift = {(1ex,2ex)}, fill = white, inner sep = 1pt] at (84: \hypt in) {$P_{n-1,3}$};

\draw[black, fill = black] (84: \hypp in) circle (.2ex);  
\node[shift = {(4ex, 1ex)}, fill = white, inner sep = 1.5pt] at (84: \hypp in) {$P_{n,2}$};

\draw[black, fill = black] (95: \hypu in) circle (.2ex);  

\draw[black, fill = black] (95: \hyps in) circle (.2ex);  
\node[font = \scriptsize, shift = {(-1.5ex,2ex)}, fill = white, inner sep = 1pt] at (95: \hyps in) {$P_{2, n-1}$};

\draw[black, fill = black] (106: \hypt in) circle (.2ex);  
\node[font = \scriptsize, shift = {(-1.5ex,2ex)}, fill = white, inner sep = 1pt] at (106: \hypt in) {$P_{2, n-2}$};

\draw[black, fill = black] (106: \hypp in) circle (.2ex);  
\node[shift = {(-4ex,1ex)}, fill = white, inner sep = 1.5pt] at (106: \hypp in) {$P_{1,n-1}$};

\draw[black, fill = black] (117: \hypu in) circle (.2ex);  

\draw[black, fill = black] (117: \hyps in) circle (.2ex);  
\node[shift = {(-4ex,1ex)}, fill = white, inner sep = 1.5pt] at (117: \hyps in) {$P_{1, n-2}$};

\draw[black, fill = black] (128: \hypt in) circle (.2ex);  
\node[shift = {(-3ex,1ex)}, fill = white, inner sep = 1.5pt, rotate = 80] at (128: \hypt in) {$\dots$};

\draw[black, fill = black] (139: \hypu in) circle (.2ex);  
\node[shift = {(-4ex,1ex)}, fill = white, inner sep = 1.5pt] at (139: \hypu in) {$P_{1,2}$};


%


\draw[black, fill = black] (0, 0) circle (.4ex);  
\node[shift = {(2ex, 0)}] at (0,0) {$O$};

\draw[black, fill = black] (40: \rrr in) circle (.3ex);  
\node[shift = {(4ex, 1ex)}, fill = white, inner sep = 1.5pt] at (40: \rrr in) {$P_{n}$};

\draw[black, fill = black] (62: \rrr in) circle (.3ex);  
\node[shift = {(0, -2.5ex)}, fill = white, inner sep = 1.5pt] at (62: \rrr in) {$P_{n-1}$};

\node[shift = {(0, -2.5ex)}, fill = white, inner sep = 1.5pt, rotate = 15] at (84: \rrr in) {$\dots$};

\draw[black, fill = black] (106: \rrr in) circle (.3ex);  
\node[shift = {(0,-2.5ex)}, fill = white, inner sep = 1.5pt] at (106: \rrr in) {$P_3$};

\draw[black, fill = black] (128: \rrr in) circle (.3ex);  
\node[shift = {(0, -2.5ex)}, fill = white, inner sep = 1.5pt] at (128: \rrr in) {$P_2$};

\draw[black, fill = black] (150: \rrr in) circle (.3ex);  
\node[shift = {(-2.5ex, 0)}, fill = white, inner sep = 1.5pt] at (150: \rrr in) {$P_1$};

\draw[black, fill = black] (95: \hyp in) circle (.3ex);  
\node[shift = {(-4ex, 0ex)},  inner sep = 13pt] at (95: \hyp in) {$P_{1,n}$};
\node[shift = {(4ex, 1ex)}] at (95: \hyp in) {$P_{n,1}$};




\end{scope}

\end{scope}

\end{scope}

}
\newcommand{\pipicj}{

 \pgfmathsetmacro{\l}{\linewidth/28.440}%
 \pgfmathsetmacro{\h}{\l/2.65}%

 \pgfmathsetmacro{\ww}{\www/2}%
 \pgfmathsetmacro{\hh}{\hhh/2}%
 \pgfmathsetmacro{\rrr}{\dwscale}%

\pgfmathsetmacro{\newr}{1/cos(55)}
\pgfmathsetmacro{\news}{1/cos(25)}
\pgfmathsetmacro{\hyps}{\rrr*\news}

\pgfmathsetmacro{\aaa}{cos(55)}
\pgfmathsetmacro{\aaaa}{cos(25)}

\pgfmathsetmacro{\bbb}{\aaa*\rrr}
\pgfmathsetmacro{\bbbb}{\aaaa*\rrr}

\pgfmathsetmacro{\hyp}{\rrr*\newr}

\pgfmathsetmacro{\newrr}{1/cos(40)}
\pgfmathsetmacro{\newrrr}{tan(15)}
\pgfmathsetmacro{\hypp}{\rrr*\newrr}
\pgfmathsetmacro{\hyppp}{\rrr*\newrrr}
\pgfmathsetmacro{\LC}{\hyppp + .5}

\pgfmathsetmacro{\newu}{1/cos(15)}
\pgfmathsetmacro{\newuu}{tan(40)}
\pgfmathsetmacro{\hypu}{\rrr*\newu}
\pgfmathsetmacro{\hypuu}{\rrr*\newuu}
\pgfmathsetmacro{\LB}{\hypuu + .5}

\begin{scope}[shift = {(-.76in, -.597in)}]

\begin{scope}[shift = {(\ww in,\hh in)}]

\begin{scope}[rotate = -50]


\draw[line width = .75pt, \dwcblue] (20: \rrr in) arc (20:170:\rrr in);

%
\draw[shorten <=-30, shorten >=-20, black] (40: \rrr in)--(95: \hyp in);      
\draw[shorten <=-30, shorten >=-20, black] (150: \rrr in)--(95: \hyp in);

\draw[\dwcviolet, shorten <=-\LC in] (70: \rrr in) -- (110:\hypp in); 

\draw[\dwcgreen!50!black, shorten <=-\LB in] (120: \rrr in) -- (135: \hypu in); 

\draw[\dwcred] (150:\rrr in)--(120:\rrr in);
\draw[\dwcred] (70:\rrr in)--(120:\rrr in);
\draw[\dwcred] (70:\rrr in)--(40:\rrr in);
\draw[\dwcred] (40:\rrr in)--(150:\rrr in);

\draw[\dwcteal] (0,0)--(150:\rrr in);
\draw[\dwcteal] (0,0)--(120:\rrr in);
\draw[\dwcteal] (0,0)--(70:\rrr in);
\draw[\dwcteal] (0,0)--(40:\rrr in);


\draw[\dwcteal, shorten >= -.4in] (0,0) -- (95: \hyp in);

\draw[black, fill = black] (55: \hypu in) circle (.2ex);  
\node[shift = {(5: 3ex)}, fill = white, inner sep = 1pt] at (55: \hypu in) {$P_{CD}$};
\node[shift = {(-70: 5ex)}, fill = white, inner sep = 1pt] at (55: \hypu in) {$L_C$};



\draw[black, fill = black] (80: \hypp in) circle (.2ex);  
\node[font = \footnotesize, shift = {(200: 3ex)}, fill = white, inner sep = 1pt] at (80: \hypp in) {$P_{BD}$};
\node[shift = {(-33: 3.85ex)}, fill = white, inner sep = 1pt] at (80: \hypp in) {$L_B$};

\draw[black, fill = black] (95: \hyps in) circle (.2ex);  
\node[font = \footnotesize, shift = {(110: 3ex)}, fill = white, inner sep = 1pt] at (95: \hyps in) {$P_{BC}$};

\draw[black, fill = black] (110: \hypp in) circle (.2ex);  
\node[shift = {(140:3ex)}, fill = white, inner sep = 1pt] at (110: \hypp in) {$P_{AC}$};

\draw[black, fill = black] (135: \hypu in) circle (.2ex);  
\node[shift = {(140: 3ex)}, fill = white, inner sep = 1pt] at (135: \hypu in) {$P_{AB}$};


\draw[black, fill = black] (0, 0) circle (.4ex);  
\node[shift = {(-2ex, 0)}, fill = white, inner sep = 1pt] at (0,0) {$O$};

\draw[black, fill = black] (40: \rrr in) circle (.3ex);  
\node[shift = {(0: -2ex)}, fill = white, inner sep = 1pt] at (40: \rrr in) {$D$};
\node[shift = {(-80: 6ex)}, fill = white, inner sep = 1pt] at (40: \rrr in) {$L_D$};

\draw[black, fill = black] (70: \rrr in) circle (.3ex);  
\node[shift = {(10: -2ex)}, fill = white, inner sep = 1pt] at (70: \rrr in) {$C$};

\draw[black, fill = black] (120: \rrr in) circle (.3ex);  
\node[shift = {(70: -2ex)}, fill = white, inner sep = 1pt] at (120: \rrr in) {$B$};

\draw[black, fill = black] (150: \rrr in) circle (.3ex);  
\node[shift = {(100: -2ex)}, fill = white, inner sep = 1pt] at (150: \rrr in) {$A$};

\draw[black, fill = black] (95: \hyp in) circle (.3ex);  

\node at (95: \bbb in) [shift = {(270: 1.41ex)}, rotate = -45, draw,thin,minimum width=2ex,minimum height=2ex] {};

\node at (95: \bbbb in) [shift = {(270: 1.41ex)}, rotate = -45, draw,thin,minimum width=2ex,minimum height=2ex] {};

\draw[black, fill = black] (95: \bbb in) circle (.3ex);
\node[shift = {(185: 2.5ex)}] at (95: \bbb in) {$E$};

\draw[black, fill = black] (95: \bbbb in) circle (.3ex);
\node[shift = {(185: 2.5ex)}] at (95: \bbbb in) {$F$};

\node[shift = {(50: 3ex)}, fill = white, inner sep = 1pt] at (95: \hyp in) {$L$};
\node[shift = {(-2.7: 4.6ex)}, fill = white, inner sep = 0pt] at (95: \hyp in) {$L_A$};
\node[shift = {(140: 3ex)},  inner sep = 5pt] at (95: \hyp in) {$P_{AD}$};




\end{scope}

\end{scope}

\end{scope}

}
\newcommand{\pipicnewk}{

 \pgfmathsetmacro{\l}{\linewidth/28.440}%
 \pgfmathsetmacro{\h}{\l/2.65}%

 \pgfmathsetmacro{\ww}{\www/2}%
 \pgfmathsetmacro{\hh}{\hhh/2}%
 \pgfmathsetmacro{\rrr}{\dwscale}%

 
\newcommand{\dwangk}{30}

\coordinate(O) at (0,0);

\coordinate (Pa) at (170: \rrr in);
\path (Pa);
\pgfgetlastxy{\Pax}{\Pay};
\coordinate (Pb) at (150: \rrr in);
\path (Pb);
\pgfgetlastxy{\Pbx}{\Pby};
\coordinate (Pc) at (130: \rrr in);
\path (Pc);
\pgfgetlastxy{\Pcx}{\Pcy};
\coordinate (Pd) at (110: \rrr in);
\path (Pd);
\pgfgetlastxy{\Pdx}{\Pdy};
\coordinate (Pe) at (90: \rrr in);
\path (Pe);
\pgfgetlastxy{\Pex}{\Pey};
\coordinate (Pf) at (70: \rrr in);
\path (Pf);
\pgfgetlastxy{\Pfx}{\Pfy};
\coordinate (Pg) at (50: \rrr in);
\path (Pg);
\pgfgetlastxy{\Pgx}{\Pgy};
\coordinate (Ph) at (30: \rrr in);
\path (Ph);
\pgfgetlastxy{\Phx}{\Phy};
\coordinate (Pi) at (10: \rrr in);
\path (Pi);
\pgfgetlastxy{\Pix}{\Piy};

\coordinate (ab) at (\Pbx, \Pay);
\coordinate (ac) at (\Pcx, \Pay);
\coordinate (ad) at (\Pdx, \Pay);
\coordinate (ae) at (\Pex, \Pay);
\coordinate (af) at (\Pfx, \Pay);
\coordinate (ag) at (\Pgx, \Pay);
\coordinate (ah) at (\Phx, \Pay);
\coordinate (ai) at (\Pix, \Pay);

\coordinate (X2) at (\Pcx, \Pby);
\coordinate (X3) at (\Pdx, \Pcy);
\coordinate (X4) at (\Pex, \Pdy);
\coordinate (X5) at (\Pfx, \Pgy);
\coordinate (X6) at (\Pgx, \Phy);
\coordinate (X7) at (\Pgx, \Phy);

\coordinate (rad) at (90:\rrr in);

\draw[black] ([rotate around={\dwangk:(0,0)}]O) -- ([rotate around={\dwangk:(0,0)}]rad); 
\draw[fill = black, black] ([rotate around={\dwangk:(0,0)}]O) circle (.4ex); 
\node[shift = {(1.5ex, -1.5ex)}] at ([rotate around={\dwangk:(0,0)}]O) {$O$};

\draw[line width = .5pt, \dwcred, rotate around={\dwangk:(0,0)}] (10: \rrr in) arc (10:170:\rrr in);

\draw[blue, thick] ([rotate around={\dwangk:(0,0)}]Pa) -- ([rotate around={\dwangk:(0,0)}]Pb) -- ([rotate around={\dwangk:(0,0)}]Pc) -- ([rotate around={\dwangk:(0,0)}]Pd) -- ([rotate around={\dwangk:(0,0)}]Pe) -- ([rotate around={\dwangk:(0,0)}]Pf) -- ([rotate around={\dwangk:(0,0)}]Pg) -- ([rotate around={\dwangk:(0,0)}]Ph) -- ([rotate around={\dwangk:(0,0)}]Pi);

\draw[] ([rotate around={\dwangk:(0,0)}]Pb) -- ([rotate around={\dwangk:(0,0)}]X2);
\node[shift = {(.45, -.1)}, teal] at ([rotate around={\dwangk:(0,0)}]X2) {$X_2$};
\draw[] ([rotate around={\dwangk:(0,0)}]Pc) -- ([rotate around={\dwangk:(0,0)}]X3);
\node[shift = {(.45, -.1)}, teal] at ([rotate around={\dwangk:(0,0)}]X3) {$X_3$};
\draw[] ([rotate around={\dwangk:(0,0)}]Pd) -- ([rotate around={\dwangk:(0,0)}]X4);
\node[shift = {(.45, -.1)}, teal] at ([rotate around={\dwangk:(0,0)}]X4) {$X_4$};
\draw[] ([rotate around={\dwangk:(0,0)}]Pg) -- ([rotate around={\dwangk:(0,0)}]X5);
\node[shift = {(.45, -.1)}, teal] at ([rotate around={\dwangk:(0,0)}]X5) {$X_5$};
\draw[] ([rotate around={\dwangk:(0,0)}]Ph) -- ([rotate around={\dwangk:(0,0)}]X6);
\node[shift = {(.45, -.1)}, teal] at ([rotate around={\dwangk:(0,0)}]X6) {$X_6$};

\node[shift = {(170: 2ex)}] at ([rotate around={\dwangk:(0,0)}]Pa) {$P_1$};
\node[shift = {(150: 2ex)}] at ([rotate around={\dwangk:(0,0)}]Pb) {$P_2$};
\node[shift = {(130: 2ex)}] at ([rotate around={\dwangk:(0,0)}]Pc) {$P_3$};
\node[shift = {(110: 2ex)}] at ([rotate around={\dwangk:(0,0)}]Pd) {$P_4$};
\node[shift = {(90: 2ex)}] at ([rotate around={\dwangk:(0,0)}]Pe) {$P_5$};
\node[shift = {(70: 2ex)}] at ([rotate around={\dwangk:(0,0)}]Pf) {$P_6$};
\node[shift = {(50: 2ex)}] at ([rotate around={\dwangk:(0,0)}]Pg) {$P_7$};
\node[shift = {(30: 2ex)}] at ([rotate around={\dwangk:(0,0)}]Ph) {$P_8$};
\node[shift = {(10: 2ex)}] at ([rotate around={\dwangk:(0,0)}]Pi) {$P_9$};

\node[blue, shift = {(0, -2ex)}] at ([rotate around={\dwangk:(0,0)}]ab) {$q_2$};
\node[blue, shift = {(0, -2ex)}] at ([rotate around={\dwangk:(0,0)}]ac) {$q_3$};
\node[blue, shift = {(0, -2ex)}] at ([rotate around={\dwangk:(0,0)}]ad) {$q_4$};
\node[blue, shift = {(0, -2ex)}] at ([rotate around={\dwangk:(0,0)}]ae) {$q_5$};
\node[blue, shift = {(0, -2ex)}] at ([rotate around={\dwangk:(0,0)}]af) {$q_6$};
\node[blue, shift = {(0, -2ex)}] at ([rotate around={\dwangk:(0,0)}]ag) {$q_7$};
\node[blue, shift = {(0, -2ex)}] at ([rotate around={\dwangk:(0,0)}]ah) {$q_8$};

\draw[] ([rotate around={\dwangk:(0,0)}]Pa) -- ([rotate around={\dwangk:(0,0)}]Pi);

\draw[] ([rotate around={\dwangk:(0,0)}]Pb) -- ([rotate around={\dwangk:(0,0)}]ab);
\draw[] ([rotate around={\dwangk:(0,0)}]Pc) -- ([rotate around={\dwangk:(0,0)}]ac);
\draw[] ([rotate around={\dwangk:(0,0)}]Pd) -- ([rotate around={\dwangk:(0,0)}]ad);
\draw[] ([rotate around={\dwangk:(0,0)}]Pe) -- ([rotate around={\dwangk:(0,0)}]ae);
\draw[] ([rotate around={\dwangk:(0,0)}]Pf) -- ([rotate around={\dwangk:(0,0)}]af);
\draw[] ([rotate around={\dwangk:(0,0)}]Pg) -- ([rotate around={\dwangk:(0,0)}]ag);
\draw[] ([rotate around={\dwangk:(0,0)}]Ph) -- ([rotate around={\dwangk:(0,0)}]ah);
\draw[] ([rotate around={\dwangk:(0,0)}]Pi) -- ([rotate around={\dwangk:(0,0)}]ai);

\begin{scope}[rotate around={\dwangk:(0,0)}]
\draw[shift = {([rotate around={\dwangk:(0,0)}]ab)}] (0,0) rectangle (1.25ex, 1.25ex);
\draw[shift = {([rotate around={\dwangk:(0,0)}]ac)}] (0,0) rectangle (1.25ex, 1.25ex);
\draw[shift = {([rotate around={\dwangk:(0,0)}]ad)}] (0,0) rectangle (1.25ex, 1.25ex);
\draw[shift = {([rotate around={\dwangk:(0,0)}]ae)}] (0,0) rectangle (1.25ex, 1.25ex);
\draw[shift = {([rotate around={\dwangk:(0,0)}]af)}] (0,0) rectangle (1.25ex, 1.25ex);
\draw[shift = {([rotate around={\dwangk:(0,0)}]ag)}] (0,0) rectangle (1.25ex, 1.25ex);
\draw[shift = {([rotate around={\dwangk:(0,0)}]ah)}] (0,0) rectangle (1.25ex, 1.25ex);
\end{scope}

\draw[fill = black] ([rotate around={\dwangk:(0,0)}]Pa) circle (.25ex);
\draw[fill = black] ([rotate around={\dwangk:(0,0)}]Pb) circle (.25ex);
\draw[fill = black] ([rotate around={\dwangk:(0,0)}]Pc) circle (.25ex);
\draw[fill = black] ([rotate around={\dwangk:(0,0)}]Pd) circle (.25ex);
\draw[fill = black] ([rotate around={\dwangk:(0,0)}]Pe) circle (.25ex);
\draw[fill = black] ([rotate around={\dwangk:(0,0)}]Pf) circle (.25ex);
\draw[fill = black] ([rotate around={\dwangk:(0,0)}]Pg) circle (.25ex);
\draw[fill = black] ([rotate around={\dwangk:(0,0)}]Ph) circle (.25ex);
\draw[fill = black] ([rotate around={\dwangk:(0,0)}]Pi) circle (.25ex);

\draw[blue, fill = blue] ([rotate around={\dwangk:(0,0)}]ab) circle (.25ex);
\draw[blue, fill = blue] ([rotate around={\dwangk:(0,0)}]ac) circle (.25ex);
\draw[blue, fill = blue] ([rotate around={\dwangk:(0,0)}]ad) circle (.25ex);
\draw[blue, fill = blue] ([rotate around={\dwangk:(0,0)}]ae) circle (.25ex);
\draw[blue, fill = blue] ([rotate around={\dwangk:(0,0)}]af) circle (.25ex);
\draw[blue, fill = blue] ([rotate around={\dwangk:(0,0)}]ag) circle (.25ex);
\draw[blue, fill = blue] ([rotate around={\dwangk:(0,0)}]ah) circle (.25ex);
\draw[] ([rotate around={\dwangk:(0,0)}]ai) circle (.25ex);

\draw[teal, fill = teal] ([rotate around={\dwangk:(0,0)}]X2) circle (.25ex);
\draw[teal, fill = teal] ([rotate around={\dwangk:(0,0)}]X3) circle (.25ex);
\draw[teal, fill = teal] ([rotate around={\dwangk:(0,0)}]X4) circle (.25ex);
\draw[teal, fill = teal] ([rotate around={\dwangk:(0,0)}]X5) circle (.25ex);
\draw[teal, fill = teal] ([rotate around={\dwangk:(0,0)}]X6) circle (.25ex);


}
\newcommand{\pipicnewl}{

 \pgfmathsetmacro{\l}{\linewidth/28.440}%
 \pgfmathsetmacro{\h}{\l/2.65}%

 \pgfmathsetmacro{\ww}{\www/2}%
 \pgfmathsetmacro{\hh}{\hhh/2}%
 \pgfmathsetmacro{\rrr}{\dwscale}%

 
\newcommand{\dwangk}{30}

\coordinate(O) at (0,0);

\coordinate (Pa) at (171: \rrr in);
\path (Pa);
\pgfgetlastxy{\Pax}{\Pay};
\coordinate (Pb) at (153: \rrr in);
\path (Pb);
\pgfgetlastxy{\Pbx}{\Pby};
\coordinate (Pc) at (135: \rrr in);
\path (Pc);
\pgfgetlastxy{\Pcx}{\Pcy};
\coordinate (Pd) at (117: \rrr in);
\path (Pd);
\pgfgetlastxy{\Pdx}{\Pdy};
\coordinate (Pe) at (99: \rrr in);
\path (Pe);
\pgfgetlastxy{\Pex}{\Pey};
\coordinate (Pf) at (81: \rrr in);
\path (Pf);
\pgfgetlastxy{\Pfx}{\Pfy};
\coordinate (Pg) at (63: \rrr in);
\path (Pg);
\pgfgetlastxy{\Pgx}{\Pgy};
\coordinate (Ph) at (45: \rrr in);
\path (Ph);
\pgfgetlastxy{\Phx}{\Phy};
\coordinate (Pi) at (27: \rrr in);
\path (Pi);
\pgfgetlastxy{\Pix}{\Piy};
\coordinate (Pj) at (9: \rrr in);
\path (Pj);
\pgfgetlastxy{\Pjx}{\Pjy};

\coordinate (ab) at (\Pbx, \Pay);
\coordinate (ac) at (\Pcx, \Pay);
\coordinate (ad) at (\Pdx, \Pay);
\coordinate (ae) at (\Pex, \Pay);
\coordinate (af) at (\Pfx, \Pay);
\coordinate (ag) at (\Pgx, \Pay);
\coordinate (ah) at (\Phx, \Pay);
\coordinate (ai) at (\Pix, \Pay);
\coordinate (aj) at (\Pjx, \Pay);

\coordinate (X2) at (\Pcx, \Pby);
\coordinate (X3) at (\Pdx, \Pcy);
\coordinate (X4) at (\Pex, \Pdy);
\coordinate (X5) at (\Pfx, \Pgy);
\coordinate (X6) at (\Pgx, \Phy);
\coordinate (X7) at (\Phx, \Piy);
\coordinate (X8) at (\Pix, \Pjy);

\coordinate (rad) at (90:\rrr in);

\draw[black] ([rotate around={\dwangk:(0,0)}]O) -- ([rotate around={\dwangk:(0,0)}]rad); 
\draw[fill = black, black] ([rotate around={\dwangk:(0,0)}]O) circle (.4ex); 
\node[shift = {(1.5ex, -1.5ex)}] at ([rotate around={\dwangk:(0,0)}]O) {$O$};

\draw[line width = .5pt, \dwcred, rotate around={\dwangk:(0,0)}] (9: \rrr in) arc (9:171:\rrr in);

\draw[blue, thick] ([rotate around={\dwangk:(0,0)}]Pa) -- ([rotate around={\dwangk:(0,0)}]Pb) -- ([rotate around={\dwangk:(0,0)}]Pc) -- ([rotate around={\dwangk:(0,0)}]Pd) -- ([rotate around={\dwangk:(0,0)}]Pe) -- ([rotate around={\dwangk:(0,0)}]Pf) -- ([rotate around={\dwangk:(0,0)}]Pg) -- ([rotate around={\dwangk:(0,0)}]Ph) -- ([rotate around={\dwangk:(0,0)}]Pi)-- ([rotate around={\dwangk:(0,0)}]Pj);

\draw[] ([rotate around={\dwangk:(0,0)}]Pb) -- ([rotate around={\dwangk:(0,0)}]X2);
\node[shift = {(.45, -.1)}, teal] at ([rotate around={\dwangk:(0,0)}]X2) {$X_2$};
\draw[] ([rotate around={\dwangk:(0,0)}]Pc) -- ([rotate around={\dwangk:(0,0)}]X3);
\node[shift = {(.45, -.1)}, teal] at ([rotate around={\dwangk:(0,0)}]X3) {$X_3$};
\draw[] ([rotate around={\dwangk:(0,0)}]Pd) -- ([rotate around={\dwangk:(0,0)}]X4);
\node[shift = {(.45, -.1)}, teal, inner sep = 2pt, fill = white] at ([rotate around={\dwangk:(0,0)}]X4) {$X_4$};
\draw[] ([rotate around={\dwangk:(0,0)}]Pg) -- ([rotate around={\dwangk:(0,0)}]X5);
\node[shift = {(.45, -.1)}, teal] at ([rotate around={\dwangk:(0,0)}]X5) {$X_5$};
\draw[] ([rotate around={\dwangk:(0,0)}]Ph) -- ([rotate around={\dwangk:(0,0)}]X6);
\node[shift = {(.45, -.1)}, teal] at ([rotate around={\dwangk:(0,0)}]X6) {$X_6$};
\draw[] ([rotate around={\dwangk:(0,0)}]Pi) -- ([rotate around={\dwangk:(0,0)}]X7);
\node[shift = {(.45, -.1)}, teal] at ([rotate around={\dwangk:(0,0)}]X7) {$X_7$};

\node[shift = {(170: 2ex)}] at ([rotate around={\dwangk:(0,0)}]Pa) {$P_1$};
\node[shift = {(150: 2ex)}] at ([rotate around={\dwangk:(0,0)}]Pb) {$P_2$};
\node[shift = {(130: 2ex)}] at ([rotate around={\dwangk:(0,0)}]Pc) {$P_3$};
\node[shift = {(110: 2ex)}] at ([rotate around={\dwangk:(0,0)}]Pd) {$P_4$};
\node[shift = {(90: 2ex)}] at ([rotate around={\dwangk:(0,0)}]Pe) {$P_5$};
\node[shift = {(70: 2ex)}] at ([rotate around={\dwangk:(0,0)}]Pf) {$P_6$};
\node[shift = {(50: 2ex)}] at ([rotate around={\dwangk:(0,0)}]Pg) {$P_7$};
\node[shift = {(30: 2ex)}] at ([rotate around={\dwangk:(0,0)}]Ph) {$P_8$};
\node[shift = {(10: 2ex)}] at ([rotate around={\dwangk:(0,0)}]Pi) {$P_9$};
\node[shift = {(10: 2ex)}] at ([rotate around={\dwangk:(0,0)}]Pj) {$P_{10}$};

\node[blue, shift = {(0, -2ex)}] at ([rotate around={\dwangk:(0,0)}]ab) {$q_2$};
\node[blue, shift = {(0, -2ex)}] at ([rotate around={\dwangk:(0,0)}]ac) {$q_3$};
\node[blue, shift = {(0, -2ex)}] at ([rotate around={\dwangk:(0,0)}]ad) {$q_4$};
\node[blue, shift = {(0, -2ex)}] at ([rotate around={\dwangk:(0,0)}]ae) {$q_5$};
\node[blue, shift = {(0, -2ex)}] at ([rotate around={\dwangk:(0,0)}]af) {$q_6$};
\node[blue, shift = {(0, -2ex)}] at ([rotate around={\dwangk:(0,0)}]ag) {$q_7$};
\node[blue, shift = {(0, -2ex)}] at ([rotate around={\dwangk:(0,0)}]ah) {$q_8$};
\node[blue, shift = {(0, -2ex)}] at ([rotate around={\dwangk:(0,0)}]ai) {$q_9$};

\draw[] ([rotate around={\dwangk:(0,0)}]Pa) -- ([rotate around={\dwangk:(0,0)}]Pj);

\draw[] ([rotate around={\dwangk:(0,0)}]Pb) -- ([rotate around={\dwangk:(0,0)}]ab);
\draw[] ([rotate around={\dwangk:(0,0)}]Pc) -- ([rotate around={\dwangk:(0,0)}]ac);
\draw[] ([rotate around={\dwangk:(0,0)}]Pd) -- ([rotate around={\dwangk:(0,0)}]ad);
\draw[] ([rotate around={\dwangk:(0,0)}]Pe) -- ([rotate around={\dwangk:(0,0)}]ae);
\draw[] ([rotate around={\dwangk:(0,0)}]Pf) -- ([rotate around={\dwangk:(0,0)}]af);
\draw[] ([rotate around={\dwangk:(0,0)}]Pg) -- ([rotate around={\dwangk:(0,0)}]ag);
\draw[] ([rotate around={\dwangk:(0,0)}]Ph) -- ([rotate around={\dwangk:(0,0)}]ah);
\draw[] ([rotate around={\dwangk:(0,0)}]Pi) -- ([rotate around={\dwangk:(0,0)}]ai);

\begin{scope}[rotate around={\dwangk:(0,0)}]
\draw[shift = {([rotate around={\dwangk:(0,0)}]ab)}] (0,0) rectangle (1.25ex, 1.25ex);
\draw[shift = {([rotate around={\dwangk:(0,0)}]ac)}] (0,0) rectangle (1.25ex, 1.25ex);
\draw[shift = {([rotate around={\dwangk:(0,0)}]ad)}] (0,0) rectangle (1.25ex, 1.25ex);
\draw[shift = {([rotate around={\dwangk:(0,0)}]ae)}] (0,0) rectangle (1.25ex, 1.25ex);
\draw[shift = {([rotate around={\dwangk:(0,0)}]af)}] (0,0) rectangle (1.25ex, 1.25ex);
\draw[shift = {([rotate around={\dwangk:(0,0)}]ag)}] (0,0) rectangle (1.25ex, 1.25ex);
\draw[shift = {([rotate around={\dwangk:(0,0)}]ah)}] (0,0) rectangle (1.25ex, 1.25ex);
\draw[shift = {([rotate around={\dwangk:(0,0)}]ai)}] (0,0) rectangle (1.25ex, 1.25ex);
\end{scope}

\draw[fill = black] ([rotate around={\dwangk:(0,0)}]Pa) circle (.25ex);
\draw[fill = black] ([rotate around={\dwangk:(0,0)}]Pb) circle (.25ex);
\draw[fill = black] ([rotate around={\dwangk:(0,0)}]Pc) circle (.25ex);
\draw[fill = black] ([rotate around={\dwangk:(0,0)}]Pd) circle (.25ex);
\draw[fill = black] ([rotate around={\dwangk:(0,0)}]Pe) circle (.25ex);
\draw[fill = black] ([rotate around={\dwangk:(0,0)}]Pf) circle (.25ex);
\draw[fill = black] ([rotate around={\dwangk:(0,0)}]Pg) circle (.25ex);
\draw[fill = black] ([rotate around={\dwangk:(0,0)}]Ph) circle (.25ex);
\draw[fill = black] ([rotate around={\dwangk:(0,0)}]Pi) circle (.25ex);
\draw[fill = black] ([rotate around={\dwangk:(0,0)}]Pj) circle (.25ex);

\draw[blue, fill = blue] ([rotate around={\dwangk:(0,0)}]ab) circle (.25ex);
\draw[blue, fill = blue] ([rotate around={\dwangk:(0,0)}]ac) circle (.25ex);
\draw[blue, fill = blue] ([rotate around={\dwangk:(0,0)}]ad) circle (.25ex);
\draw[blue, fill = blue] ([rotate around={\dwangk:(0,0)}]ae) circle (.25ex);
\draw[blue, fill = blue] ([rotate around={\dwangk:(0,0)}]af) circle (.25ex);
\draw[blue, fill = blue] ([rotate around={\dwangk:(0,0)}]ag) circle (.25ex);
\draw[blue, fill = blue] ([rotate around={\dwangk:(0,0)}]ah) circle (.25ex);
\draw[blue, fill = blue] ([rotate around={\dwangk:(0,0)}]ai) circle (.25ex);

\draw[teal, fill = teal] ([rotate around={\dwangk:(0,0)}]X2) circle (.25ex);
\draw[teal, fill = teal] ([rotate around={\dwangk:(0,0)}]X3) circle (.25ex);
\draw[teal, fill = teal] ([rotate around={\dwangk:(0,0)}]X4) circle (.25ex);
\draw[teal, fill = teal] ([rotate around={\dwangk:(0,0)}]X5) circle (.25ex);
\draw[teal, fill = teal] ([rotate around={\dwangk:(0,0)}]X6) circle (.25ex);
\draw[teal, fill = teal] ([rotate around={\dwangk:(0,0)}]X7) circle (.25ex);


}
\newcommand{\pipicm}{

 \pgfmathsetmacro{\l}{\linewidth/28.440}%
 \pgfmathsetmacro{\h}{\l/2.65}%

 \pgfmathsetmacro{\ww}{\www/2}%
 \pgfmathsetmacro{\hh}{\hhh/2}%
 \pgfmathsetmacro{\rrr}{\dwscale}%

 
\begin{scope}[shift = {(-.025in, -.377in)}]
\begin{scope}[shift = {(\ww in,\hh in)}]

\begin{scope}[rotate = 30]

 \draw[fill = black, black] (0, 0) circle (.4ex); 
 \node[shift = {(-.5ex, -2.5ex)}] at (0,0) {$O$};

\draw[line width = .5pt, \dwcred] (10: \rrr in) arc (10:170:\rrr in);

\begin{scope}[rotate = 10]

\foreach \a in {0,1, 3, 5, 6,7}{
\pgfmathsetmacro{\b}{160/8}
\pgfmathsetmacro{\c}{\a*\b}
\pgfmathsetmacro{\d}{\a+1}
\pgfmathsetmacro{\e}{\d*\b}
\draw[line width = .75pt, \dwcblue] (\c:\rrr in) -- (\e: \rrr in); 



}
  
\foreach \a in {2,4}{
\pgfmathsetmacro{\b}{160/8}
\pgfmathsetmacro{\c}{\a*\b}
\pgfmathsetmacro{\d}{\a+1}
\pgfmathsetmacro{\e}{\d*\b}
\draw[densely dashed, line width = .75pt, \dwcblue] (\c:\rrr in) -- (\e: \rrr in); 



}

\end{scope}

\pgfmathsetmacro{\aa}{sin(10)};
\pgfmathsetmacro{\bb}{\rrr*\aa};

\pgfmathsetmacro{\a}{1};
\pgfmathsetmacro{\aone}{90 - \a*20};
\pgfmathsetmacro{\cl}{cos(\aone)};
\pgfmathsetmacro{\lone}{\rrr*\cl};
\node[shift = {(\aone: 3ex)}] at (\aone: \rrr in) {$P_{m +1}$};

\pgfmathsetmacro{\a}{2};
\pgfmathsetmacro{\aone}{90 - \a*20};
\pgfmathsetmacro{\cl}{cos(\aone)};
\pgfmathsetmacro{\lone}{\rrr*\cl};
\node[shift = {(\aone: 3ex)}] at (\aone: \rrr in) {$P_{n -1}$};

\pgfmathsetmacro{\a}{3};
\pgfmathsetmacro{\aone}{90 - \a*20};
\pgfmathsetmacro{\cl}{cos(\aone)};
\pgfmathsetmacro{\lone}{\rrr*\cl};
\node[shift = {(\aone: 3ex)}] at (\aone: \rrr in) {$P_{n}$};



\pgfmathsetmacro{\a}{-3};
\pgfmathsetmacro{\aone}{90 - \a*20};
\pgfmathsetmacro{\cl}{cos(\aone)};
\pgfmathsetmacro{\lone}{\rrr*\cl};
\node[shift = {(\aone: 3ex)}] at (\aone: \rrr in) {$P_2$};

\pgfmathsetmacro{\a}{-2};
\pgfmathsetmacro{\aone}{90 - \a*20};
\pgfmathsetmacro{\cl}{cos(\aone)};
\pgfmathsetmacro{\lone}{\rrr*\cl};


\node[shift = {(\aone: 3ex)}] at (\aone: \rrr in) {$P_3$};

\pgfmathsetmacro{\aaa}{-1};
\pgfmathsetmacro{\aaaone}{90 - \aaa*20};
\pgfmathsetmacro{\clll}{cos(\aaaone)};
\pgfmathsetmacro{\lllone}{\rrr*\clll};


\node[shift = {(\aaaone: 3ex)}] at (\aaaone: \rrr in) {$P_{4}$};

\pgfmathsetmacro{\a}{0};
\pgfmathsetmacro{\aone}{90 - \a*20};
\pgfmathsetmacro{\cl}{cos(\aone)};
\pgfmathsetmacro{\lone}{\rrr*\cl};
\node[shift = {(\aone: 3ex)}] at (\aone: \rrr in) {$P_m$};




\pgfmathsetmacro{\aaa}{-3};
\pgfmathsetmacro{\aaaone}{90 - \aaa*20};
\pgfmathsetmacro{\clll}{cos(\aaaone)};
\pgfmathsetmacro{\lllone}{\rrr*\clll};

\pgfmathsetmacro{\a}{3};
\pgfmathsetmacro{\aone}{90 - \a*20};
\pgfmathsetmacro{\cl}{cos(\aone)};
\pgfmathsetmacro{\lone}{\rrr*\cl};


\pgfmathsetmacro{\ea}{3};
\pgfmathsetmacro{\eaone}{90 - \ea*20};
\pgfmathsetmacro{\ecl}{cos(\eaone)};
\pgfmathsetmacro{\elone}{\rrr*\ecl};


\pgfmathsetmacro{\fa}{5};
\pgfmathsetmacro{\faone}{90 - \fa*20};
\pgfmathsetmacro{\fcl}{cos(\faone)};
\pgfmathsetmacro{\flone}{\rrr*\fcl};


\draw[\dwcblue] (170: \rrr in) -- (\flone in, \bb in);

\pgfmathsetmacro{\a}{1};
\pgfmathsetmacro{\aone}{90 - \a*20};
\pgfmathsetmacro{\cl}{cos(\aone)};
\pgfmathsetmacro{\lone}{\rrr*\cl};

\pgfmathsetmacro{\ka}{sin(50)};
\pgfmathsetmacro{\cka}{cos(50)};
\pgfmathsetmacro{\kb}{sin(70)};
\pgfmathsetmacro{\kc}{sin(10)};
\pgfmathsetmacro{\kd}{cos(10)};
\pgfmathsetmacro{\ld}{\rrr*cos(10)};
\pgfmathsetmacro{\ldd}{\rrr - \ld};

\pgfmathsetmacro{\ja}{2*\rrr*\ka}; 
\pgfmathsetmacro{\jb}{\rrr*\kb}; 
\pgfmathsetmacro{\jc}{\rrr*\kc}; 
\pgfmathsetmacro{\jd}{\jb-\jc}; 
\pgfmathsetmacro{\je}{\jd/\ja}; 
\pgfmathsetmacro{\aangle}{asin(\je)}; 

\pgfmathsetmacro{\lengthoffset}{\ja  - \ld};


\draw[line width = .5pt, \dwcteal] (\lengthoffset in, \bb in) arc (0:\aangle:\ja in);

\pgfmathsetmacro{\wa}{cos(\aangle)};
\pgfmathsetmacro{\wb}{\ja*\wa};
\pgfmathsetmacro{\wc}{\wb + \ldd-\rrr};



\begin{scope}[shift = {(270: 13ex)}]
\draw[dashed, shift = {(\wc in, \bb in)}] (0, 5ex) -- ( 0, 12.5ex);

\pgfmathsetmacro{\za}{\wc + \ld};
\pgfmathsetmacro{\avex}{\za/2};

\pgfmathsetmacro{\zza}{\bb + \jc};
\pgfmathsetmacro{\avey}{\zza/2};


\draw[shift = {(\wc in, \bb in)}] (0, 0ex) -- ( 0, 4ex);
\draw[shift = {(10: \rrr in)}] (0, 0ex) -- ( 0, 4ex);
\draw[shift = {(270: 0ex)}] (10: \rrr in) -- (\wc in, \bb in);
\node[fill = white] at (\avex in, \avey in) {$d$};

\end{scope}

\begin{scope}[shift = {(270: 18ex)}]

\pgfmathsetmacro{\za}{\lengthoffset + \ld};
\pgfmathsetmacro{\avex}{\za/2};

\pgfmathsetmacro{\zza}{\bb + \jc};
\pgfmathsetmacro{\avey}{\zza/2};

\draw[shift = {(\lengthoffset in, \bb in)}] (0, 0ex) -- ( 0, 4ex);
\draw[dashed, shift = {(\lengthoffset in, \bb in)}] (0, 5ex) -- ( 0, 17ex);
\draw[shift = {(10: \rrr in)}] (0, 0ex) -- ( 0, 4ex);
\draw[shift = {(270: 0ex)}] (10: \rrr in) -- (\lengthoffset in, \bb in);

\node[fill = white] at (\avex in, \avey in) {$s$};

\end{scope}

\begin{scope}[shift = {(270: 8ex)}]
\draw[shift = {(10: \rrr in)}] (0, 0ex) -- ( 0, 4ex);
\draw[shift = {(170: \rrr in)}] (0, 0ex) -- ( 0, 4ex);
\draw[shift = {(0, 0)}] (0, 4ex) -- ( 0, 2ex);

\node[] at (0, 0ex) {$l_n$};

\draw[shift = {(270: 0ex)}] (170: \rrr in) -- (10: \rrr in);
\end{scope}


\draw[fill = black] (170: \rrr in) circle (.25ex);   
\node[inner sep = 1pt, fill = white, shift = {(245: 3ex)}] at (170: \rrr in) {$P_1 = q_1$};   

\draw[fill = black] (10: \rrr in) circle (.25ex);   
\node[shift = {(55: 3ex)}] at (10: \rrr in) {$P_{n+1}$};

\foreach \a in {1,2,3,4}{
\pgfmathsetmacro{\aone}{90 - \a*20};
\pgfmathsetmacro{\cl}{cos(\aone)};
\pgfmathsetmacro{\lone}{\rrr*\cl};




\draw[fill = black] (\aone: \rrr in) circle (.25ex);

}

\pgfmathsetmacro{\aone}{90 - 20};
\pgfmathsetmacro{\cl}{cos(\aone)};
\pgfmathsetmacro{\lone}{\rrr*\cl};

\draw[] (\aone: \rrr in) -- (\lone in, \bb in);

\draw[] (\lone in, \bb in) rectangle (\lone in - 1.75ex, \bb in + 1.75ex);

\draw[fill = black] (\lone in, \bb in) circle (.25ex);

\draw[] (\aone: \rrr in) -- (170: \rrr in);

\foreach \a in {-3, -2, -1, 0}{
\pgfmathsetmacro{\aone}{90 - \a*20};
\pgfmathsetmacro{\cl}{cos(\aone)};
\pgfmathsetmacro{\lone}{\rrr*\cl};




\draw[fill = black] (\aone: \rrr in) circle (.25ex);

}

\node[fill = white] at (120:\rrr *\cka in) {$l_m$};

\node[inner sep = 1pt, fill = white, shift = {(260: 3ex)}] at (\wc in, \bb in) {$q_{m+1}$};

\draw[fill = black] (\lengthoffset in, \bb in) circle (.25ex);

\node[inner sep = 1pt, fill = white, shift = {(-35: 2.5ex)}] at (\lengthoffset in, \bb in) {$Q$};

\end{scope}

\end{scope}
\end{scope}

}
\newcommand{\pipicn}{

 \pgfmathsetmacro{\l}{\linewidth/28.440}%
 \pgfmathsetmacro{\h}{\l/2.65}%

 \pgfmathsetmacro{\ww}{\www/2}%
 \pgfmathsetmacro{\hh}{\hhh/2}%
 \pgfmathsetmacro{\rrr}{\dwscale}%

\pgfmathsetmacro{\newr}{1/cos(55)}
\pgfmathsetmacro{\newrr}{1/cos(44)}
\pgfmathsetmacro{\news}{1/cos(33)}
\pgfmathsetmacro{\newt}{1/cos(22)}
\pgfmathsetmacro{\newu}{1/cos(11)}

\pgfmathsetmacro{\newuu}{tan(44)}
\pgfmathsetmacro{\newtt}{tan(33)}
\pgfmathsetmacro{\newss}{tan(22)}
\pgfmathsetmacro{\newrrr}{tan(11)}

\pgfmathsetmacro{\hyp}{\rrr*\newr}
\pgfmathsetmacro{\hypp}{\rrr*\newrr}
\pgfmathsetmacro{\hyppp}{\rrr*\newrrr}
\pgfmathsetmacro{\hyps}{\rrr*\news}
\pgfmathsetmacro{\hypt}{\rrr*\newt}
\pgfmathsetmacro{\hypss}{\rrr*\newss}
\pgfmathsetmacro{\hyptt}{\rrr*\newtt}
\pgfmathsetmacro{\hypu}{\rrr*\newu}
\pgfmathsetmacro{\hypuu}{\rrr*\newuu}
\begin{scope}[shift = {(.755in, -2.738in)}]

\begin{scope}[shift = {(\ww in,\hh in)}]

\begin{scope}[rotate = 10]





%
\draw[shorten <=0, shorten >=0, black] (73: \hyps in)--(95: \hyp in);      
\draw[shorten <=0, shorten >=0, black] (117: \hyps in)--(95: \hyp in);

\draw[black, shorten <= 0 in] (106: \hypp in) -- (84: \hypt in);
\draw[black, shorten <=0 in] (117: \hyps in) -- (95: \hypu in); 
\draw[black, shorten <= 0 in]  (73: \hyps in) -- (95: \hypu in);
\draw[black!50!black, shorten <=0 in] (106: \hypt in) -- (84: \hypp in);

\draw[] (95: \hyps in) -- (117: \hyps in);

\pgfmathsetmacro{\xpnno}{cos(95)}
\pgfmathsetmacro{\ypnno}{sin(95)}
\pgfmathsetmacro{\xdpnno}{\hyps*\xpnno}
\pgfmathsetmacro{\ydpnno}{\hyps*\ypnno}

\pgfmathsetmacro{\xpno}{cos(106)}
\pgfmathsetmacro{\ypno}{sin(106)}
\pgfmathsetmacro{\xdpno}{\hypp*\xpno}
\pgfmathsetmacro{\ydpno}{\hypp*\ypno}

\pgfmathsetmacro{\xpnnt}{cos(106)}
\pgfmathsetmacro{\ypnnt}{sin(106)}
\pgfmathsetmacro{\xdpnnt}{\hypt*\xpnnt}
\pgfmathsetmacro{\ydpnnt}{\hypt*\ypnnt}

\pgfmathsetmacro{\xpnt}{cos(117)}
\pgfmathsetmacro{\ypnt}{sin(117)}
\pgfmathsetmacro{\xdpnt}{\hyps*\xpnt}
\pgfmathsetmacro{\ydpnt}{\hyps*\ypnt}

\pgfmathsetmacro{\xdaveone}{\xdpnt/2 + \xdpnno/2}
\pgfmathsetmacro{\ydaveone}{\ydpnt/2 + \ydpnno/2}
\draw[rotate around = {16: (\xdaveone in, \ydaveone in)}] (\xdaveone in, \ydaveone in) rectangle (\xdaveone in +2ex, \ydaveone in + 2ex);

\draw[\dwcred] (106: \hypp in) -- (\xdaveone in, \ydaveone in);

\draw[black] (106: \hypt in) -- (\xdaveone in, \ydaveone in);

\draw[fill = black] (\xdaveone in, \ydaveone in) circle (.25ex);
\node[shift = {(170: 2.5ex)}, fill = white, inner sep = 1pt] at (\xdaveone in, \ydaveone in)  {$A$};

\pgfmathsetmacro{\xdiffone}{\xdpno - \xdaveone}
\pgfmathsetmacro{\ydiffone}{\ydpno - \ydaveone}
\pgfmathsetmacro{\xdoublediffone}{2*\xdiffone}
\pgfmathsetmacro{\ydoublediffone}{2*\ydiffone}
\pgfmathsetmacro{\xshiftdoublediffone}{\xdpnno +\xdoublediffone}
\pgfmathsetmacro{\yshiftdoublediffone}{\ydpnno +\ydoublediffone}

\draw[rotate around = {16: (\xdpnno in, \ydpnno in)}] (\xdpnno in, \ydpnno in) rectangle (\xdpnno in -2ex, \ydpnno in + 2ex);

\draw[\dwcred] (\xdpnno in, \ydpnno in) -- (\xshiftdoublediffone in, \yshiftdoublediffone in);
\draw[fill = black] (\xshiftdoublediffone in, \yshiftdoublediffone in) circle (.25ex);

\node[shift = {(170: 2.5ex)}, fill = white, inner sep = 1pt] at (\xshiftdoublediffone in, \yshiftdoublediffone in)  {$B$};


\draw[\dwcblue] (95: .9*\rrr in) -- (95: \hyp in);
\draw[\dwcblue] (95: .6*\rrr in) -- (95: .75*\rrr in);
\draw[dashed,\dwcblue] (95: .9*\rrr in) -- (95: .75*\rrr in);



\draw[black, fill = black] (73: \hyps in) circle (.2ex);  
\node[shift = {(4ex, 1ex)}, fill = white, inner sep = 1.5pt] at (73: \hyps in) {$P_{i,3}$};

\draw[black, fill = black] (84: \hypt in) circle (.2ex);  
\node[shift = {(300:3ex)}, fill = white, inner sep = 1pt] at (84: \hypt in) {$P_{i-1,3}$};

\draw[black, fill = black] (84: \hypp in) circle (.2ex);  
\node[shift = {(4ex, 1ex)}, fill = white, inner sep = 1.5pt] at (84: \hypp in) {$P_{i,2}$};

\draw[black, fill = black] (95: \hypu in) circle (.2ex);  

\draw[black, fill = black] (95: \hyps in) circle (.25ex);  
\node[font = \scriptsize, shift = {(10:4.5ex)}, fill = white, inner sep = 1pt] at (95: \hyps in) {$P_{2, i-1}$};

\draw[black, fill = black] (106: \hypt in) circle (.2ex);  
\node[shift = {(260: 2ex)}, fill = white, inner sep = 1pt] at (106: \hypt in) {$P_{2, i-2}$};

\draw[black, fill = black] (106: \hypp in) circle (.25ex);  
\node[shift = {(-4ex,1ex)}, fill = white, inner sep = 1.5pt] at (106: \hypp in) {$P_{1,i-1}$};


\draw[black, fill = black] (117: \hyps in) circle (.2ex);  
\node[shift = {(-4ex,1ex)}, fill = white, inner sep = 1.5pt] at (117: \hyps in) {$P_{1, i-2}$};


%


\draw[black, fill = black] (95: .6*\rrr in) circle (.4ex);  
\node[shift = {(2ex, 0)}] at (95: .6*\rrr in) {$O$};

\draw[black, fill = black] (95: \hyp in) circle (.3ex);  
\node[shift = {(-4ex, 0ex)}] at (95: \hyp in) {$P_{1,i}$};
\node[shift = {(4ex, 1ex)}] at (95: \hyp in) {$P_{1,i}$};




\end{scope}

\end{scope}

\end{scope}

}
\newcommand{\pipico}{

 \pgfmathsetmacro{\l}{\linewidth/28.440}%
 \pgfmathsetmacro{\h}{\l/2.65}%

 \pgfmathsetmacro{\ww}{\www/2}%
 \pgfmathsetmacro{\hh}{\hhh/2}%
 \pgfmathsetmacro{\rrr}{\dwscale}%

\pgfmathsetmacro{\newr}{1/cos(55)}
\pgfmathsetmacro{\news}{1/cos(25)}
\pgfmathsetmacro{\hyps}{\rrr*\news}

\pgfmathsetmacro{\aaa}{cos(55)}
\pgfmathsetmacro{\aaaa}{cos(25)}

\pgfmathsetmacro{\bbb}{\aaa*\rrr}
\pgfmathsetmacro{\bbbb}{\aaaa*\rrr}

\pgfmathsetmacro{\hyp}{\rrr*\newr}

\pgfmathsetmacro{\newrr}{1/cos(40)}
\pgfmathsetmacro{\newrrr}{tan(15)}
\pgfmathsetmacro{\hypp}{\rrr*\newrr}
\pgfmathsetmacro{\hyppp}{\rrr*\newrrr}
\pgfmathsetmacro{\LC}{\hyppp + .5}

\pgfmathsetmacro{\newu}{1/cos(15)}
\pgfmathsetmacro{\newuu}{tan(40)}
\pgfmathsetmacro{\hypu}{\rrr*\newu}
\pgfmathsetmacro{\hypuu}{\rrr*\newuu}
\pgfmathsetmacro{\LB}{\hypuu + .5}

\begin{scope}[shift = {(.405 in, -.96 in)}]

\begin{scope}[shift = {(\ww in,\hh in)}]

\begin{scope}[rotate = 25]


\draw[line width = .75pt, \dwcblue] (20: \rrr in) arc (20:170:\rrr in);

%
\draw[shorten <=-30, shorten >=-20, black] (40: \rrr in)--(95: \hyp in);      
\draw[shorten <=-30, shorten >=-20, black] (150: \rrr in)--(95: \hyp in);




\draw[\dwcteal] (0,0)--(150:\rrr in);
\draw[\dwcteal] (0,0)--(40:\rrr in);


\draw[\dwcteal, shorten >= -0in] (0,0) -- (95: \hyp in);









\draw[black, fill = black] (0, 0) circle (.4ex);  
\node[shift = {(-20:3ex)}, fill = white, inner sep = 1pt] at (0,0) {$O$};

\draw[black, fill = black] (40: \rrr in) circle (.3ex);  
\node[shift = {(20: 4ex)}, fill = white, inner sep = 1pt] at (40: \rrr in) {$P_{3,3}$};
\node[shift = {(200: 4ex)}, fill = white, inner sep = 1pt] at (40: \rrr in) {$P_{3}$};





\pgfmathsetmacro{\recipcosofsixtytwopointfive}{1/cos(27.5)};
\pgfmathsetmacro{\cosofsixtytwopointfive}{cos(27.5)};
\pgfmathsetmacro{\sinofsixtytwopointfive}{sin(27.5)};

\pgfmathsetmacro{\lengthcosofsixtytwopointfive}{\rrr*\cosofsixtytwopointfive};
\pgfmathsetmacro{\lengthrecipcosofsixtytwopointfive}{\rrr*\recipcosofsixtytwopointfive};
\pgfmathsetmacro{\aa}{\lengthrecipcosofsixtytwopointfive - \lengthcosofsixtytwopointfive};
\pgfmathsetmacro{\a}{2*\aa};
\pgfmathsetmacro{\x}{cos(122.5)};
\pgfmathsetmacro{\y}{sin(122.5)};
\pgfmathsetmacro{\z}{cos(95)};
\pgfmathsetmacro{\w}{sin(95)};
\pgfmathsetmacro{\lz}{\rrr*\z};
\pgfmathsetmacro{\lw}{\rrr*\w};

\pgfmathsetmacro{\kz}{\lengthcosofsixtytwopointfive*\x};
\pgfmathsetmacro{\kw}{\lengthcosofsixtytwopointfive*\y};

\pgfmathsetmacro{\nx}{\x*\a};
\pgfmathsetmacro{\ny}{\y*\a};

\pgfmathsetmacro{\mx}{\nx + \lz};
\pgfmathsetmacro{\my}{\ny + \lw};

\draw[] (0,0) -- (67.5:\lengthrecipcosofsixtytwopointfive in);

\draw[] (0,0) -- (122.5:\lengthcosofsixtytwopointfive in);

\draw[] (122.5:\lengthrecipcosofsixtytwopointfive in) -- (67.5:\lengthrecipcosofsixtytwopointfive in);

\draw[\dwcviolet] (150: \rrr in) -- (95: \rrr in);


\node[white, thick, rotate = 57.5, anchor = south east, rotate = 0, draw,minimum width=2ex,minimum height=2ex] at (\lz in +.35pt, \lw in -.07pt) {};

\node[rotate = 57.5, anchor = south east, rotate = 0, draw,thin,minimum width=2ex,minimum height=2ex] at (\lz in +.27pt, \lw in -.07pt) {};

\node[white, thick, rotate = 57.5, anchor = south west, rotate = 0, draw,minimum width=2ex,minimum height=2ex] at (\kz in -.15pt, \kw in -.35pt) {};

\node[rotate = 57.5, anchor = south west, rotate = 0, draw,thin,minimum width=2ex,minimum height=2ex] at (\kz in + 0pt, \kw in -.35pt) {};


\node[shift = {(75: 2.5ex)}] at (95: \rrr in) {$P_2$};

\node[inner sep = 5pt, shift = {(25: 4ex)}] at (95: \hyp in) {$P_{3,1}$};
\node[inner sep = 1pt, fill = white, shift = {(205: 4ex)}] at (95: \hyp in) {$P_{1,3}$};




\draw[\dwcred] (95:\rrr in) -- (\mx in, \my in);

\draw[fill = black] (\mx in, \my in) circle (.3ex);  
\node[shift = {(205: 4ex)}] at (\mx in, \my in) {$B$};

\draw[\dwcred] (122.5:\lengthcosofsixtytwopointfive in) -- (122.5:\lengthrecipcosofsixtytwopointfive in);

\draw[fill = black] (67.5:\lengthrecipcosofsixtytwopointfive in) circle (.2ex);
\node[shift = {(20: 4ex)}] at (67.5:\lengthrecipcosofsixtytwopointfive in) {$P_{3,2}$};

\draw[fill = black] (122.5:\lengthrecipcosofsixtytwopointfive in) circle (.3ex);
\node[shift = {(205: 4ex)}] at (122.5:\lengthrecipcosofsixtytwopointfive in) {$P_{1,2}$};

\draw[fill = black] (122.5:\lengthcosofsixtytwopointfive in) circle (.3ex);
\node[shift = {(20: 3ex)}] at (122.5:\lengthcosofsixtytwopointfive in) {$M$};

\draw[fill = black] (95: \rrr in) circle (.3ex);  

\draw[black, fill = black] (150: \rrr in) circle (.3ex);  
\node[shift = {(205: 4ex)}] at (150: \rrr in) {$P_{1,1}$};
\node[shift = {(25: 4ex)}] at (150: \rrr in) {$P_{1}$};

\draw[fill = black] (95: \hyp in) circle (.3ex);  

\end{scope}

\end{scope}

\end{scope}

}
\newcommand{\pipicp}{

 \pgfmathsetmacro{\l}{\linewidth/28.440}%
 \pgfmathsetmacro{\h}{\l/2.65}%

 \pgfmathsetmacro{\ww}{\www/2}%
 \pgfmathsetmacro{\hh}{\hhh/2}%
 \pgfmathsetmacro{\rrr}{\dwscale/2}%
 
\begin{scope}[shift = {(.035 in, -.488 in)}]

\begin{scope}[shift = {(\ww in,\hh in)}]

\begin{scope}[rotate = 0]

\draw[fill = black] (0,0) circle (.4ex);  
\node[shift = {(-45: 3ex)}] at (0,0) {$O$};

\draw[dashed, line width = .75pt, \dwcblue] (215: \rrr in) arc (215:200:\rrr in);
\draw[line width = .75pt, \dwcblue] (-30: \rrr in) arc (-30:200:\rrr in);
\draw[dashed, line width = .75pt, \dwcblue] (-30: \rrr in) arc (-30:-45:\rrr in);

\node[shift = {(100: -4ex)}] at (100:\rrr in) {$ P_1 = P_{n+1}$};

\node[shift = {(30: -3ex)}] at (30:\rrr in) {$P_{n}$};

\node[shift = {(10: -3ex)}] at (10:\rrr in) {$P_{n-1}$};

\node[shift = {(-20: -3ex)}] at (-20:\rrr in) {$P_{n-2}$};

\node[shift = {(190: -4ex)}] at (190:\rrr in) {$P_4$};

\node[shift = {(170: -4ex)}] at (170:\rrr in) {$P_3$};

\node[shift = {(140: -3ex)}] at (140:\rrr in) {$P_2$};

 \pgfmathsetmacro{\aa}{cos(35)}%
  \pgfmathsetmacro{\ra}{1/\aa}%
   \pgfmathsetmacro{\la}{\rrr*\ra}%

\node[shift = {(65: 3ex)}] at (65: \la in) {$V_n$};

 \pgfmathsetmacro{\ab}{cos(10)}%
  \pgfmathsetmacro{\rb}{1/\ab}%
   \pgfmathsetmacro{\lb}{\rrr*\rb}%

\node[shift = {(20: 3ex)}] at (20: \lb in) {$V_{n-1}$};

\draw[\dwcred] (65: \la in) -- (20: \lb in);

 \pgfmathsetmacro{\ac}{cos(15)}%
  \pgfmathsetmacro{\rc}{1/\ac}%
   \pgfmathsetmacro{\lc}{\rrr*\rc}%

\node[shift = {(-5: 3ex)}] at (-5: \lc in) {$V_{n-2}$};

\draw[\dwcred] (20: \lb in) -- (-5: \lc in);

\draw[\dwcred] (-5: \lc in) -- (-20: \rrr in);

 \pgfmathsetmacro{\ad}{cos(10)}%
  \pgfmathsetmacro{\rd}{1/\ad}%
   \pgfmathsetmacro{\ld}{\rrr*\rd}%


\node[inner sep = 0pt, shift = {(180: 3ex)}] at (180: \ld in) {$V_{3}$};

\draw[\dwcred] (190:\rrr in) -- (180: \ld in);  

 \pgfmathsetmacro{\ae}{cos(15)}%
  \pgfmathsetmacro{\re}{1/\ae}%
   \pgfmathsetmacro{\le}{\rrr*\re}%

\node[shift = {(155: 3ex)}] at (155: \le in) {$V_{2}$};

\draw[\dwcred] (180: \ld in) -- (155: \le in);
\draw[\dwcred] (180: \ld in) -- (155: \le in);

 \pgfmathsetmacro{\af}{cos(20)}%
  \pgfmathsetmacro{\rf}{1/\af}%
   \pgfmathsetmacro{\lf}{\rrr*\rf}%

\node[shift = {(120: 3ex)}] at (120: \lf in) {$V_1$};

\draw[\dwcred] (155: \le in) -- (120: \lf in);

\draw[\dwcred] (120: \lf in) -- (65:\la in);

\draw[fill = black] (120: \lf in) circle (.25ex); 
\draw[] (100:\rrr in) -- (30:\rrr in);  
\draw[] (30:\rrr in) -- (10:\rrr in);  
\draw[] (10:\rrr in) -- (-20:\rrr in);  
\draw[dashed, shorten >= .9in] (-20:\rrr in) -- (270:\rrr in);  
\draw[dashed, shorten >= .75in] (190:\rrr in) -- (245: \rrr in);  
\draw[] (170:\rrr in) -- (190:\rrr in);  
\draw[] (140:\rrr in) -- (170:\rrr in);  
\draw[] (100:\rrr in) -- (140:\rrr in);  


\draw[fill = black] (100:\rrr in) circle (.3ex);
\draw[fill = black] (30:\rrr in) circle (.3ex); 
\draw[fill = black] (10:\rrr in) circle (.3ex); 
\draw[fill = black] (-20:\rrr in) circle (.3ex); 
\draw[fill = black] (190:\rrr in) circle (.3ex);  
\draw[fill = black] (170:\rrr in) circle (.3ex);  
\draw[fill = black] (140:\rrr in) circle (.3ex);  
 \draw[fill = black] (65: \la in) circle (.25ex);  
\draw[fill = black] (20: \lb in) circle (.25ex); 
\draw[fill = black] (-5: \lc in) circle (.25ex); 
\draw[fill = black] (180: \ld in) circle (.25ex); 
\draw[fill = black] (155: \le in) circle (.25ex);

\end{scope}

\end{scope}

\end{scope}
}
\newcommand{\pipicr}{

 \pgfmathsetmacro{\l}{\linewidth/28.440}%
 \pgfmathsetmacro{\h}{\l/2.65}%

 \pgfmathsetmacro{\ww}{\www/2}%
 \pgfmathsetmacro{\hh}{\hhh/2}%
 \pgfmathsetmacro{\rrr}{\dwscale}%

\pgfmathsetmacro{\newr}{1/cos(55)}
\pgfmathsetmacro{\newrr}{1/cos(44)}
\pgfmathsetmacro{\news}{1/cos(33)}
\pgfmathsetmacro{\newt}{1/cos(22)}
\pgfmathsetmacro{\newu}{1/cos(11)}

\pgfmathsetmacro{\newuu}{tan(44)}
\pgfmathsetmacro{\newtt}{tan(33)}
\pgfmathsetmacro{\newss}{tan(22)}
\pgfmathsetmacro{\newrrr}{tan(11)}

\pgfmathsetmacro{\hyp}{\rrr*\newr}
\pgfmathsetmacro{\hypp}{\rrr*\newrr}
\pgfmathsetmacro{\hyppp}{\rrr*\newrrr}
\pgfmathsetmacro{\hyps}{\rrr*\news}
\pgfmathsetmacro{\hypt}{\rrr*\newt}
\pgfmathsetmacro{\hypss}{\rrr*\newss}
\pgfmathsetmacro{\hyptt}{\rrr*\newtt}
\pgfmathsetmacro{\hypu}{\rrr*\newu}
\pgfmathsetmacro{\hypuu}{\rrr*\newuu}
 %
 

\pgfmathsetmacro{\ax}{\rrr*cos(106)}
\pgfmathsetmacro{\ay}{\rrr*sin(106)}
\pgfmathsetmacro{\bx}{\hypp*cos(150)}
\pgfmathsetmacro{\by}{\hypp*sin(150)}

\pgfmathsetmacro{\xsquare}{(\ax - \bx)*(\ax-\bx)}
\pgfmathsetmacro{\ysquare}{(\ay - \by)*(\ay-\by)}
\pgfmathsetmacro{\radius}{sqrt(\xsquare +\ysquare)}

\begin{scope}[shift = {(.128in, -1.29in)}]

\begin{scope}[shift = {(\ww in,\hh in)}]

\begin{scope}[rotate = 10]

\draw[line width = .75pt, \dwcblue] (20: \rrr in) arc (20:170:\rrr in);

%
\draw[shorten <=-30, shorten >=-20, black] (40: \rrr in)--(95: \hyp in);      
\draw[shorten <=-30, shorten >=-20, black] (150: \rrr in)--(95: \hyp in);

\draw[\dwcviolet, shorten <=-\hyppp in] (62: \rrr in) -- (106:\hypp in);

\draw[\dwcorange] (106:\hypp in) ++(240:\radius in) arc (240:330:\radius in);

\draw[\dwcred, shorten <=-\hypss in] (84: \rrr in) -- (117:\hyps in);

\draw[\dwcteal] (0,0) -- (150: \rrr in);
\draw[\dwcteal] (0,0) -- (84: \rrr in);
\draw[\dwcteal] (0,0) -- (62: \rrr in);
\draw[\dwcteal] (0,0) -- (40: \rrr in);

\draw[black, fill = black] (51: \hypu in) circle (.2ex);  
\node[shift = {(3ex, 1ex)}, fill = white, inner sep = .5pt] at (51: \hypu in) {$P_{A,Q}$};

\draw[black, fill = black] (62: \hypt in) circle (.2ex);  
\node[shift = {(3ex, 1ex)}, fill = white, inner sep = .5pt, rotate = 0] at (62: \hypt in) {$P_{A,B}$};

\draw[black, fill = black] (73: \hypu in) circle (.2ex);  


\draw[black, fill = black] (106: \hypp in) circle (.3ex);  
\node[shift = {(-4ex,1ex)}, fill = white, inner sep = 1.5pt] at (106: \hypp in) {$P$};

\draw[black, fill = black] (117: \hyps in) circle (.3ex);  
\node[shift = {(-4ex,1ex)}, fill = white, inner sep = 1.5pt] at (117: \hyps in) {$P_{B,C}$};


%


\draw[black, fill = black] (0, 0) circle (.4ex);  
\node[shift = {(2ex, 0)}] at (0,0) {$O$};

\draw[black, fill = black] (40: \rrr in) circle (.3ex);  
\node[shift = {(85:-2.5ex)}, fill = white, inner sep = 1.5pt] at (40: \rrr in) {$A$};

\node[rotate = 0, anchor = north east ,minimum width=2ex,minimum height=2ex, shift = {(55:1.2)}] at (40: \rrr in) {};

\draw[black, fill = black] (62: \rrr in) circle (.3ex);  
\node[shift = {(107: -2.5ex)}, fill = white, inner sep = 1.5pt] at (62: \rrr in) {$Q$};

\node[rotate= -18, anchor = north east, draw,thin,minimum width=2ex,minimum height=2ex] at (62: \rrr in) {};

\draw[black, fill = black] (84: \rrr in) circle (.3ex);  
\node[shift = {(129: -2.5ex)}, fill = white, inner sep = 1.5pt] at (84: \rrr in) {$B$};

\node[rotate = 4, anchor = north east, draw,thin,minimum width=2ex,minimum height=2ex] at (84:\rrr in) {};

\draw[black, fill = black] (150: \rrr in) circle (.3ex);  
\node[shift = {(195:-2.5ex)}, fill = white, inner sep = 1.5pt] at (150: \rrr in) {$C$};

\node[rotate = 70, anchor = north east, draw,thin,minimum width=2ex,minimum height=2ex] at (150: \rrr in) {};

\draw[black, fill = black] (95: \hyp in) circle (.3ex);  
\node[shift = {(-4ex,1ex)}] at (95: \hyp in) {$P_{A,C}$};


\end{scope}

\end{scope}

\end{scope}
}
\newcommand{\dwpice}{
\renewcommand{\dwscale}{1.0175}
\noindent\begin{minipage}{\linewidth}
\begin{center}
\begin{tikzpicture}
\pipice
\end{tikzpicture}
\end{center}
\vskip .125in
\begin{center}
Figure \refstepcounter{dwfig}\label{e}\arabic{dwfig}
\end{center}
\end{minipage}
}
\newcommand{\dwpicf}{
\renewcommand{\dwscale}{1.0485}
\noindent\begin{minipage}{\linewidth}
\begin{center}
\begin{tikzpicture}
\pipicf
\end{tikzpicture}
\end{center}
\vskip .125in
\begin{center}
Figure \refstepcounter{dwfig}\label{f}\arabic{dwfig}
\end{center}
\end{minipage}
}
\newcommand{\dwpicg}{
\renewcommand{\dwscale}{1.191}
\noindent\begin{minipage}{\linewidth}
\begin{center}
\begin{tikzpicture}
\pipicg
\end{tikzpicture}
\end{center}
\vskip .125in
\begin{center}
Figure \refstepcounter{dwfig}\label{g}\arabic{dwfig}
\end{center}
\end{minipage}
}
\newcommand{\dwpich}{
\renewcommand{\dwscale}{.9285}
\noindent\begin{minipage}{\linewidth}
\begin{center}
\begin{tikzpicture}
\pipich
\end{tikzpicture}
\end{center}
\vskip .125in
\begin{center}
Figure \refstepcounter{dwfig}\label{h}\arabic{dwfig}
\end{center}
\end{minipage}
}
\newcommand{\dwpichh}{
\renewcommand{\dwscale}{.9285}
\noindent\begin{minipage}{\linewidth}
\begin{center}
\begin{tikzpicture}
\pipichh
\end{tikzpicture}
\end{center}
\vskip .125in
\begin{center}
Figure \refstepcounter{dwfig}\label{hh}\arabic{dwfig}
\end{center}
\end{minipage}
}
\newcommand{\dwpici}{
\renewcommand{\dwscale}{1.4275}
\noindent\begin{minipage}{\linewidth}
\begin{center}
\begin{tikzpicture}
\pipici
\end{tikzpicture}
\end{center}
\vskip .125in
\begin{center}
Figure \refstepcounter{dwfig}\label{i}\arabic{dwfig}
\end{center}
\end{minipage}
}
\newcommand{\dwpicj}{
\renewcommand{\dwscale}{1.1908}
\noindent\begin{minipage}{\linewidth}
\begin{center}
\begin{tikzpicture}
\pipicj
\end{tikzpicture}
\end{center}
\vskip .125in
\begin{center}
Figure \refstepcounter{dwfig}\label{j}\arabic{dwfig}
\end{center}
\end{minipage}
}
\newcommand{\dwpick}{
\renewcommand{\dwscale}{1.3787}
\noindent\begin{minipage}{\linewidth}
\begin{center}
\begin{tikzpicture}
\pipicnewk
\end{tikzpicture}
\end{center}
\vskip .125in
\begin{center}
Figure \refstepcounter{dwfig}\label{k}\arabic{dwfig}
\end{center}
\end{minipage}
}
\newcommand{\dwpicl}{
\renewcommand{\dwscale}{1.3668}
\noindent\begin{minipage}{\linewidth}
\begin{center}
\begin{tikzpicture}
\pipicnewl
\end{tikzpicture}
\end{center}
\vskip .125in
\begin{center}
Figure \refstepcounter{dwfig}\label{l}\arabic{dwfig}
\end{center}
\end{minipage}
}
\newcommand{\dwpicm}{
\renewcommand{\dwscale}{1.1488}
\noindent\begin{minipage}{\linewidth}
\begin{center}
\begin{tikzpicture}
\pipicm
\end{tikzpicture}
\end{center}
\vskip .125in
\begin{center}
Figure \refstepcounter{dwfig}\label{m}\arabic{dwfig}
\end{center}
\end{minipage}
}
\newcommand{\dwpicn}{
\renewcommand{\dwscale}{1.8175}
\noindent\begin{minipage}{\linewidth}
\begin{center}
\begin{tikzpicture}
\pipicn
\end{tikzpicture}
\end{center}
\vskip .125in
\begin{center}
Figure \refstepcounter{dwfig}\label{n}\arabic{dwfig}
\end{center}
\end{minipage}
}
\newcommand{\dwpico}{
\renewcommand{\dwscale}{1.1575}
\noindent\begin{minipage}{\linewidth}
\begin{center}
\begin{tikzpicture}
\pipico
\end{tikzpicture}
\end{center}
\vskip .125in
\begin{center}
Figure \refstepcounter{dwfig}\label{o}\arabic{dwfig}
\end{center}
\end{minipage}
}
\newcommand{\dwpicp}{
\renewcommand{\dwscale}{2.224}
\noindent\begin{minipage}{\linewidth}
\begin{center}
\begin{tikzpicture}
\renewcommand{\dwscale}{2.224}
\pipicp
\end{tikzpicture}
\end{center}
\vskip .125in
\begin{center}
Figure \refstepcounter{dwfig}\label{p}\arabic{dwfig}
\end{center}
\end{minipage}
}
\newcommand{\dwpicr}{
\renewcommand{\dwscale}{1.2786}
\noindent\begin{minipage}{\linewidth}
\begin{center}
\begin{tikzpicture}
\pipicr
\end{tikzpicture}
\end{center}
\vskip .125in
\begin{center}
Figure \refstepcounter{dwfig}\label{r}\arabic{dwfig}
\end{center}
\end{minipage}
}
\newcommand{\ugettikzxy}[3]{%
  \tikz@scan@one@point\pgfutil@firstofone#1\relax
  \edef#2{\the\pgf@x}%
  \edef#3{\the\pgf@y}%
  }
\def\g#1#2{g(#1,#2)}
\def\G#1#2{G(#1,#2)}
\def\au{\underline{\alpha}}
\def\ao{\overline{\alpha}}
\numberwithin{equation}{section} \numberwithin{proposition}{section}
\numberwithin{lemma}{section}
\begin{document}
\title{Modernizing Archimedes' Construction of $\pi$}
\author{David Weisbart}\address{Department of Mathematics\\University of
California, Riverside}\email{weisbart@math.ucr.edu}

\maketitle 

\begin{flushright}
In memory of my mentor and dear friend, Professor V.S.Varadarajan.
\end{flushright}

\begin{abstract}
In his famous work, ``Measurement of a Circle,'' Archimedes described a procedure for measuring both the circumference of a circle and the area it bounds.  Implicit in his work is the idea that his procedure defines these quantities.  Modern approaches for defining $\pi$ eschew his method and instead use arguments that are easier to justify, but they involve ideas that are not elementary.  This paper makes Archimedes' measurement procedure rigorous from a modern perspective.  In so doing, it brings a rigorous and geometric treatment of the differential properties of the trigonometric functions into the purview of an introductory calculus course.
\end{abstract}

\setcounter{tocdepth}{1}
\tableofcontents

\section{Introduction}

Archimedes' estimate of the value of $\pi$ in ``Measurement of a Circle'' is one of his greatest achievements \cite{Heath}.  His approach anticipated foundational ideas of modern analysis as formulated in the 19th and early 20th centuries, making this work one of the first major analytical achievements.  Archimedes implicitly defined the circumference of a unit circle axiomatically as a number always greater than the perimeters of approximating inscribed polygons and always less than the perimeters of approximating circumscribed polygons, where the approximating polygons are regular refinements of a regular hexagon.  By calculating the perimeters of a regular inscribed and circumscribed polygon of 96 sides, he arrived at the famous estimate \begin{flalign*}&\dsp \frac{223}{71} < \pi < \frac{22}{7}.&\end{flalign*}  He also found upper and lower bounds for the area of a disk respectively given by the areas of regular refinements of circumscribed and inscribed squares. He showed that the area of the disk should be the area of a right triangle with one leg having length equal to the circumference of the unit circle and the other having the length of the radius.

 The fact that the approximation procedure for the circumference appears to depend both on the method of refinement and on the type of the polygons used in the first stage of the approximation presents a difficulty from a modern point of view. The circumference should be intrinsic to the circle and independent of any specified approximation procedure.  It is important to modernize Archimedes' construction in a way that is maximally accessible to a contemporary of Archimedes for both aesthetic and practical reasons.  A practical consequence is that such a modernization cleanly and efficiently resolves the principle difficulty that arises in developing the infinitesimal theory of the trigonometric functions, the calculation \begin{flalign}\label{limsinxoverx} &\dsp \lim_{x\to 0}\frac{\sin(x)}{x} = 1.&\end{flalign} Unger notes in \cite{Unger} some difficulties in the approach common in differential calculus textbooks.  Richman also points out in \cite{Rich93} some of these difficulties and directs the reader to the approach of Apostol \cite{Apostol}, that avoids common circular arguments by using twice the area of a sector rather than arclength as the argument of the trigonometric functions.  This is a natural point of view, especially when considering the direct analogy with the hyperbolic trigonometric functions.  Although Apostol's approach is compelling, it remains desirable to connect this nonstandard way of defining the trigonometric functions with the standard approach.  Moreover, Apostol defines the area of a sector rather than deriving it from a limiting procedure using inscribed and circumscribed polygons.  So there is still some work to be done to connect the viewpoints.

There are several approaches to computing \eqref{limsinxoverx} in the literature.  Authors commonly use an analytical approach to solve the problem by defining the trigonometric functions as power series or as solutions to a system of differential equations.  Some authors begin by defining the inverse of the tangent function as an integral.  Unfortunately, these approaches lack geometric motivation.  In \cite{Vietoris}, Vietoris used the sum of angles formula for the trigonometric functions to prove \eqref{limsinxoverx} directly.  The notion of an arbitrary fraction of a circle requires a group structure on the circle and defining such a structure is certainly natural and unavoidable. The argument of the sine and cosine functions will be a multiple of the fraction of a circle that a particular arc represents.  However, taking the multiple to be the length of an arc requires a definition of the length of an arc.  Defining and computing the length of the arc is the primary difficulty.  

If one follows Vietoris, one must define~$\pi$ in some way independent of it being the area of the unit circle, or half its circumference.  If one uses the approach of Zeisel in \cite{Zeisel} that follows Vietoris'---at least philosophically---then one can dispense with the alternate definition of~$\pi$.  Taking ~$\pi$ as the limit of the area given by regular circumscribed $n$-gons and taking the multiple of the fraction in the argument to be~$2\pi$ will imply the limit \eqref{limsinxoverx}.  However, without some further work, even this elegant approach of Zeisel does not establish the meaning of the area and circumference of the unit circle as well as their relationship.  In fact, one must still show that the limit of the areas he discusses actually exists.

Using only basic euclidean geometry, we prove that for any arc $\mathscr A$ that is less than half of a circle, if $n$ is greater than $m$ and if $\ell_n$ and $\ell_m$ are the lengths of chords that respectively subtend arcs that are an $n^{\rm th}$ and an $m^{\rm th}$ of $\mathscr A$, then \begin{flalign*} &\dsp m\ell_n >  n\ell_m.&\end{flalign*}  We prove an analogous but reverse inequality for lengths corresponding to edges of circumscribed polygons. These inequalities imply that the circle is a rectifiable curve.  They furthermore give the classical relationship between the circumference of a circle and the area that it bounds in a way that is both rigorous and accessible to freshman calculus students.  Using these inequalities, we show that $2\pi$ is the limiting value of {\it any} sequence of perimeters of approximating polygons.  This approach is a simple modernization of the approach of Archimedes and makes rigorous the usual geometric arguments in calculus textbooks that prove \eqref{limsinxoverx}.  It is also elementary enough to be accessible, at least in principle, to a freshman calculus student.

\subsection*{Acknowledgements}

I thank Professor V.S.Varadarajan for the insights and suggestions he gave me during our many discussions involving this paper.  From him I learned to love, among many things, the history of our subject, and so I dedicate this paper to his memory.  I thank Dr.\;Alexander Henderson for editing this paper and for his helpful comments.

\section{Approximation by Regular $2^mn$-gons}

Denote by $\C$ the unit circle centered at $O$.  Unless otherwise specified, $n$ will be in $\mathds N_{\ge 3}$, the set of natural numbers greater than or equal to three, and $m$ will be in $\mathds N_0$, the natural numbers with zero.  All sequences henceforth indexed by $m$ will be indexed over the set $\mathds N_0$.

\subsection{Regular Inscribed and Circumscribed Polygons}

\begin{definition}
An \emph{inscribed polygon} of $\C$ is a simple polygon all of whose vertices lie on $\C$ and \emph{circumscribed polygon} of $\C$ is a simple polygon all of whose edges intersect $\C$ tangentially.  An inscribed polygon of $\C$ is \emph{regular} if all of its edges are of equal length.  A circumscribed polygon of $\C$ is \emph{regular} if each of its edges intersects $\C$ tangentially at its midpoint.
\end{definition}

While circumscribed polygons are often assumed by definition to be convex, this assumption is redundant.  Furthermore, our definition of regularity implies that all edges of a regular circumscribed polygon are congruent.  Denote respectively by $\g{m}{n}$ and $\G{m}{n}$ the regular inscribed and circumscribed polygons with $2^mn$ edges. Up to congruency, there is only one such inscribed and one such circumscribed polygon for each choice of $m$ and $n$.  Denote respectively by $\ell_n(m)$ and $L_n(m)$ the edge length of $\g{m}{n}$ and $\G{m}{n}$.  The respective perimeters of $\g{m}{n}$ and $\G{m}{n}$ are $p_n(m)$ and $P_n(m)$, where \begin{flalign*}&\dsp p_n(m) = 2^mn\ell_n(m) \quad {\rm and}\quad  P_n(m) = 2^mnL_n(m).&\end{flalign*}  Let $P$ and $Q$ be adjacent vertices of $\g{m}{n}$ and let $A$ and $B$ be adjacent vertices of $\G{m}{n}$.  Denote respectively by $\au_n(m)$ and $\ao_n(m)$ the areas of the triangles $\triangle POQ$ and $\triangle AOB$. The respective areas of $\g{m}{n}$ and $\G{m}{n}$ are $a_n(m)$ and $A_n(m)$, where \begin{flalign*}&\dsp a_n(m) = 2^mn\au_n(m)\quad {\rm and}\quad A_n(m) = 2^mn\ao_n(m).&\end{flalign*}

The triangle inequality and the additivity of area together imply the following proposition.

\begin{proposition}\label{Proposition:littlepanda}
For fixed $n$, the sequences $(p_n(m))$ and  $(a_n(m))$ are both increasing and the sequences $(P_n(m))$ and $(A_n(m))$ are both decreasing.
\end{proposition}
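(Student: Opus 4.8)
The plan is to prove the stronger, local statement that each sequence moves monotonically from its $m$-th term to its $(m+1)$-th term; since $2^{m+1}n = 2\cdot 2^mn$, passing from index $m$ to $m+1$ amounts to a single \emph{bisection refinement} that doubles the number of edges, and monotonicity of the whole sequence then follows by induction on $m$. I would treat the inscribed family $\g{m}{n}$ and the circumscribed family $\G{m}{n}$ separately, and within each family handle the perimeter via the triangle inequality and the area via additivity.

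For the inscribed polygons I would first record the geometry of the refinement: the vertices of $\g{m+1}{n}$ are the $2^mn$ vertices of $\g{m}{n}$ together with the midpoint $M$ of each of the $2^mn$ arcs cut off by an edge $PQ$ of $\g{m}{n}$, so that each edge $PQ$ is replaced by the two edges $PM$ and $MQ$. For the perimeter, since $M$ lies on $\C$ and hence off the chord $PQ$, the triangle inequality gives $|PM|+|MQ|>|PQ|$; summing over the $2^mn$ edges yields $p_n(m+1)>p_n(m)$. For the area I would observe that $\g{m}{n}\subset\g{m+1}{n}$ and that their difference is exactly the union of the $2^mn$ triangles $\triangle PMQ$, which have pairwise disjoint interiors and positive area; additivity then gives $a_n(m+1)=a_n(m)+\sum\operatorname{area}(\triangle PMQ)>a_n(m)$.

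For the circumscribed polygons the refinement is dual: the edges of $\G{m+1}{n}$ are the tangent lines of $\G{m}{n}$ together with the tangent line at the midpoint of each arc between two consecutive points of tangency of $\G{m}{n}$. Each new tangent line slices off the corner of $\G{m}{n}$ at a vertex $A$, meeting the two edges through $A$ at points $X$ and $Y$ and replacing the path $X\to A\to Y$ by the single segment $XY$. For the perimeter the triangle inequality gives $|XY|<|XA|+|AY|$, and summing the corner contributions over the $2^mn$ vertices yields $P_n(m+1)<P_n(m)$. For the area, $\G{m+1}{n}\subset\G{m}{n}$ with difference the union of the corner triangles $\triangle XAY$, so additivity gives $A_n(m+1)<A_n(m)$.

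The arithmetic in each step is trivial; the only points demanding genuine care—and hence the main obstacle—are the structural claims underlying the two refinements. On the inscribed side one must confirm that the bisecting vertex $M$ really is a vertex of $\g{m+1}{n}$ lying strictly outside the chord $PQ$, so the added triangles are nondegenerate and exterior to $\g{m}{n}$. On the circumscribed side one must verify the more delicate incidence fact that the new tangent line separates the corner vertex $A$ from $\C$ and meets precisely the two edges adjacent to $A$, so that $\G{m+1}{n}$ is obtained from $\G{m}{n}$ by excising pairwise disjoint corner triangles. Both reduce to checking that the angular positions of the refined vertices and tangent points interleave correctly with those of the coarser polygon and that convexity, hence simplicity, is preserved under refinement. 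Once these incidence facts are secured, the triangle inequality and additivity apply termwise and all four monotonicity statements follow at once.
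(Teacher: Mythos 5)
Your proposal is correct and takes essentially the same approach as the paper, which offers no written proof beyond the remark that ``the triangle inequality and the additivity of area together imply the following proposition''---exactly the bisection-refinement argument you spell out. The incidence details you flag (that the arc-midpoint vertices and the new tangent lines interleave correctly with the coarser polygon) are left implicit in the paper as well, so your version is if anything more careful.
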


The parallel postulate implies the following useful lemma.  Refer to Figure~\ref{e} to clarify the statement of the lemma.

\medskip

\dwpice

\medskip

\begin{lemma}\label{Lemma:LemmaMain}
Suppose that $C$ and $B$ are points on $\C$ so that the counterclockwise oriented arc from $C$ to $B$ is less than half of $\C$. If $L_B$ and $L_C$ are lines tangent to $\C$ respectively at $B$ and $C$, then $L_B$ and $L_C$ intersect at a point $A$ and the right triangles $\triangle OAB$ and $\triangle OCA$ are congruent.  If $F$ is the point of intersection of $\overline{OA}$ with $\overline{BC}$, then $\angle AFB$ is a right angle.  Let $M$ be the point at which $\overline{OA}$ intersects $\C$ and $L_M$ be the line tangent to $\C$ at $M$.  Denote by $D$ the point at which the line $L_M$ intersects $\overline{BA}$ and denote by $E$ the point at which $L_M$ intersects $\overline{CA}$.  The angle $\angle AMD$ is congruent to $\angle EMA$ and both are right angles.
\end{lemma}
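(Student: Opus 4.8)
The plan is to build everything on two elementary facts: a tangent line to $\C$ is perpendicular to the radius drawn to its point of tangency, and the hypotenuse--leg criterion for congruence of right triangles. The hypothesis that the counterclockwise arc from $C$ to $B$ is less than half of $\C$ enters only to guarantee that the various intersection points actually exist and lie where Figure~\ref{e} suggests; once existence is secured, each congruence and each right angle will follow from the symmetry of the configuration about the line $\overline{OA}$.

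First I would establish the intersection claim. Since $L_B \perp OB$ and $L_C \perp OC$, the two tangent lines are parallel exactly when the radii $OB$ and $OC$ are parallel, i.e.\ when $B = C$ or $B$ and $C$ are antipodal. The arc hypothesis gives $0 < \angle BOC < \pi$, ruling out both degeneracies, so $L_B$ and $L_C$ meet at a unique point $A$, and the same angle bound places $A$ on the opposite side of the chord $\overline{BC}$ from $O$. For the congruence of $\triangle OAB$ and $\triangle OCA$, I would observe that these are right triangles with right angles at $B$ and $C$ respectively (tangent $\perp$ radius), sharing the hypotenuse $OA$ and having equal legs $OB = OC = 1$; hypotenuse--leg then yields $\triangle OAB \cong \triangle OCA$, and in particular $AB = AC$ together with $\angle BOA = \angle COA$.

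Next, for the point $F$, I would note that $OB = OC$ and $AB = AC$ force both $O$ and $A$ onto the perpendicular bisector of $\overline{BC}$; hence $\overline{OA}$ \emph{is} that perpendicular bisector, so it meets $\overline{BC}$ at its midpoint $F$ at a right angle, giving $\angle AFB = 90^\circ$. Finally, since $A$ lies outside $\C$ while $O$ is its center, the segment $\overline{OA}$ crosses $\C$ at a single point $M$ with $OM = 1$; writing $\theta = \tfrac12 \angle BOC$, the right triangles above give $OF = \cos\theta < 1 < \sec\theta = OA$, so the order of points along the segment is $O, F, M, A$, placing $M$ strictly between the chord and the apex. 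The tangent $L_M$ is perpendicular to $OM$, hence to the line $\overline{OA}$, and because $L_M$ is parallel to $\overline{BC}$ and passes through a point strictly between $\overline{BC}$ and $A$, it crosses both segments $\overline{BA}$ and $\overline{CA}$, producing $D$ and $E$. Since $MA$ lies along $\overline{OA}$ while $MD$ and $ME$ lie along $L_M \perp \overline{OA}$, the angles $\angle AMD$ and $\angle EMA$ are each right angles, hence congruent.

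The routine parts are the HL congruence and the perpendicular-bisector argument. The main obstacle, and the place where the hypothesis must be spent carefully, is the family of existence/betweenness claims: that $L_B$ and $L_C$ meet at all, that $A$ and $M$ sit on the correct side of $\overline{BC}$, and above all that $L_M$ crosses the \emph{segments} $\overline{BA}$ and $\overline{CA}$ rather than their extensions. I expect to settle the last point via the ordering $O, F, M, A$ derived above, which is precisely what the condition ``arc less than half of $\C$'' secures.
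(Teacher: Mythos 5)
Your proof is correct and complete. For comparison: the paper does not actually prove this lemma at all --- it states it with only the remark that ``the parallel postulate implies the following useful lemma'' and a pointer to Figure~\ref{e}, so you have supplied an argument where the paper supplies none. Your route (tangent perpendicular to radius, hypotenuse--leg congruence, the perpendicular-bisector characterization of $\overline{OA}$, and the ordering $O, F, M, A$ to force $L_M$ to cross the segments $\overline{BA}$ and $\overline{CA}$ rather than their extensions) is exactly the kind of argument the author intends, and you correctly identify that the parallel postulate is what is really being spent: once in concluding that the non-parallel tangents $L_B$ and $L_C$ actually meet, and again in concluding that the line $L_M$, parallel to $\overline{BC}$ through an interior point of $\triangle ABC$, must exit through the other two sides. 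One small stylistic remark: since the paper's whole program is to build up the trigonometric functions from scratch, you may prefer to replace $OF = \cos\theta < 1 < \sec\theta = OA$ with the purely synthetic observation that $OF < OB = 1$ because $\overline{OF}$ is a leg of the right triangle $\triangle OFB$ with hypotenuse $\overline{OB}$, while $OA > OB = 1$ because $\overline{OB}$ is a leg of the right triangle $\triangle OBA$ with hypotenuse $\overline{OA}$; this yields the same ordering $O, F, M, A$ without naming cosine.
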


\subsection{Area and Perimeter Bounds}

While Proposition \ref{Proposition:littlepanda} guarantees for fixed $n$ the strict monotonicity of $(p_n(m))$, $(a_n(m))$, $(P_n(m))$ and $(A_n(m))$, it does not guarantee that the sequences are bounded. The following proposition provides the desired bounds.

\begin{proposition}\label{prop:seqbounds}
For each fixed $n$, \begin{flalign*}&\dsp p_n(m) < P_n(m) \quad {\rm and}\quad  a_n(m) < A_n(m).&\end{flalign*}
\end{proposition}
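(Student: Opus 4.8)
The plan is to reduce both inequalities to a comparison of a single pair of corresponding triangles, exploiting the scaling relations $p_n(m) = 2^mn\,\ell_n(m)$, $P_n(m) = 2^mn\,L_n(m)$, $a_n(m) = 2^mn\,\au_n(m)$, and $A_n(m) = 2^mn\,\ao_n(m)$. Thus it suffices to establish the two \emph{per-edge} inequalities $\ell_n(m) < L_n(m)$ and $\au_n(m) < \ao_n(m)$ and then multiply through by $2^mn$. To make a direct comparison possible, I would overlay $\g{m}{n}$ and $\G{m}{n}$ concentrically about $O$ so that the radii from $O$ to the adjacent vertices $A,B$ of $\G{m}{n}$ pass through the corresponding adjacent vertices $P,Q$ of $\g{m}{n}$. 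Since $\G{m}{n}$ is regular, its vertices are equally spaced and equidistant from $O$, so the points where these radii meet $\C$ are themselves equally spaced on $\C$ and form a regular inscribed $2^mn$-gon; by uniqueness up to congruence this is $\g{m}{n}$, and the rotation invariance of lengths and areas guarantees the choice costs nothing. With this alignment $P$ lies on $\overline{OA}$ and $Q$ on $\overline{OB}$.

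The geometric heart of the argument is that $\triangle OPQ$ and $\triangle OAB$ are similar. They share the angle at $O$, and since $OP = OQ = 1$ is the radius while $OA = OB$ by the congruence established in Lemma~\ref{Lemma:LemmaMain}, we have $OP/OA = OQ/OB$; hence the triangles are similar by the side-angle-side criterion, with ratio $OP/OA = 1/OA$. The midpoint $M$ of $\overline{AB}$ is the point of tangency, so $\overline{OM}$ is a radius perpendicular to $\overline{AB}$, and $A$ is the far endpoint of the hypotenuse of the right triangle $\triangle OMA$; therefore $OA > OM = 1$, and the similarity ratio $1/OA$ is strictly less than $1$.

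From the similarity, $\ell_n(m) = |PQ| = (1/OA)\,|AB| = (1/OA)\,L_n(m) < L_n(m)$, which is the per-edge perimeter inequality. For the areas I would argue by containment rather than by squaring the ratio, since containment keeps the step maximally elementary: because $P$ lies strictly between $O$ and $A$ and $Q$ strictly between $O$ and $B$, the triangle $\triangle OPQ$ is a proper subset of $\triangle OAB$, so by the monotonicity and additivity of area $\au_n(m) < \ao_n(m)$. Multiplying each per-edge inequality by $2^mn$ then yields $p_n(m) < P_n(m)$ and $a_n(m) < A_n(m)$.

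The step I expect to require the most care is the alignment claim: that the radii to the circumscribed vertices meet $\C$ in exactly the vertices of the regular inscribed polygon, so that the triangles being compared genuinely are the $\triangle POQ$ and $\triangle AOB$ whose areas define $\au_n(m)$ and $\ao_n(m)$. This rests on the regularity of $\G{m}{n}$—the equal angular spacing and common distance of its vertices from $O$ supplied by Lemma~\ref{Lemma:LemmaMain}—together with the uniqueness up to congruence of the regular inscribed $2^mn$-gon noted earlier. Once this bookkeeping is pinned down, the similarity and containment steps are elementary and, crucially, avoid any appeal to trigonometric identities or to the still-undefined area and circumference of $\C$.
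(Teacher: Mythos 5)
Your argument is correct. Note that the paper itself states Proposition~\ref{prop:seqbounds} without proof, so there is no official argument to compare against; what you have done is supply the missing justification, and it works. The alignment step is sound: Lemma~\ref{Lemma:LemmaMain} gives the congruence of the half-edge right triangles at each vertex of $\G{m}{n}$, which forces the vertices to be equidistant from $O$ and equally spaced angularly, so the radii to those vertices cut $\C$ in the vertex set of a regular inscribed $2^mn$-gon, which by the uniqueness-up-to-congruence remark is $\g{m}{n}$. From there the SAS similarity of $\triangle OPQ$ and $\triangle OAB$ with ratio $1/\ell(\overline{OA})<1$ (since $\overline{OA}$ is the hypotenuse of the right triangle $\triangle OMA$ with leg $\overline{OM}$ a radius) gives $\ell_n(m)<L_n(m)$, and the proper containment $\triangle OPQ\subset\triangle OAB$ gives $\au_n(m)<\ao_n(m)$; multiplying by $2^mn$ finishes both claims. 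An alternative one could imagine the author having in mind is to observe that $\g{m}{n}$ lies inside the convex polygon $\G{m}{n}$ and invoke monotonicity of area and of perimeter for nested convex regions; your per-edge version avoids having to prove the perimeter-monotonicity lemma and stays entirely within the congruence and similarity toolkit the paper has already set up, which is in keeping with its stated aim of elementarity.
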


Since $(p_n(m))$ and $(a_n(m))$ are increasing and bounded above by $P_n(0)$ and $A_n(0)$, and since $(P_n(m))$ and $(A_n(m))$ are decreasing and bounded below by $p_n(0)$ and $a_n(0)$, all four sequences are convergent, implying Proposition~\ref{pPaAn}.

\begin{proposition}\label{pPaAn}
For each fixed $n$, there are real numbers $p_n$, $P_n$, $a_n$ and $A_n$ such that \begin{flalign*}&\dsp p_n(m)\to p_n,\quad P_n(m) \to P_n,\quad a_n(m)\to a_n,\quad {\rm and} \quad A_n(m) \to A_n.&\end{flalign*}
\end{proposition}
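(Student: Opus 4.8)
The plan is to invoke the Monotone Convergence Theorem for real sequences, which guarantees that any increasing sequence bounded above converges and any decreasing sequence bounded below converges. The monotonicity of all four sequences is exactly the content of Proposition~\ref{Proposition:littlepanda}, so the only remaining task is to supply a bound on the appropriate side for each of them.

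First I would read off these bounds by combining the two preceding propositions. By Proposition~\ref{prop:seqbounds} we have $p_n(m) < P_n(m)$ for every $m$, and the decreasing part of Proposition~\ref{Proposition:littlepanda} gives $P_n(m) \le P_n(0)$; chaining these yields $p_n(m) < P_n(0)$ for all $m$, so $(p_n(m))$ is bounded above. The identical argument applied to the area inequality $a_n(m) < A_n(m)$ together with the estimate $A_n(m) \le A_n(0)$ shows that $(a_n(m))$ is bounded above by $A_n(0)$. Symmetrically, using the increasing part of Proposition~\ref{Proposition:littlepanda} together with Proposition~\ref{prop:seqbounds} in the forms $P_n(m) > p_n(m) \ge p_n(0)$ and $A_n(m) > a_n(m) \ge a_n(0)$, I would conclude that $(P_n(m))$ is bounded below by $p_n(0)$ and $(A_n(m))$ is bounded below by $a_n(0)$. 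Each sequence is then monotone and bounded on the relevant side, so the Monotone Convergence Theorem produces the four limits $p_n$, $P_n$, $a_n$, and $A_n$.

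There is no genuine obstacle in this argument: all of the substantive work resides in the two propositions already established, and the present statement is a clean bookkeeping consequence of monotonicity together with boundedness. The only point that warrants a moment's care is the direction of each bound---ensuring that each increasing sequence is bounded above rather than below, and each decreasing sequence bounded below---but this is immediate once the inequalities of Proposition~\ref{prop:seqbounds} are oriented correctly.
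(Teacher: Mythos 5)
Your proposal is correct and matches the paper's own argument, which likewise combines the monotonicity of Proposition~\ref{Proposition:littlepanda} with the bounds $p_n(m) < P_n(m) \le P_n(0)$, $a_n(m) < A_n(m) \le A_n(0)$, $P_n(m) > p_n(m) \ge p_n(0)$, and $A_n(m) > a_n(m) \ge a_n(0)$ from Proposition~\ref{prop:seqbounds}, then invokes the convergence of bounded monotone sequences. No gaps.
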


\subsection{Convergence of the Approximations}

Proposition~\ref{pPaAn} gives for each fixed $n$ respective limiting values for the areas and perimeters of regular refinements of regular inscribed and circumscribed polygons with $n$ edges.  It does not prove the equality of the respective limits.

\begin{theorem}[Heron's Theorem] If $T$ is a triangle with side lengths $a,b,c$ and $A(T)$ is the area of $T$, then \begin{flalign*}&\dsp A(T) = \sqrt{s(s-a)(s-b)(s-c)} \quad {\rm where} \quad s = \frac{a+b+c}{2}.&\end{flalign*}
\end{theorem}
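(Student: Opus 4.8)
The plan is to reduce the computation of the area to the Pythagorean theorem by dropping an altitude, and then to recognize the resulting expression for $16A(T)^2$ as a product of four linear factors through repeated difference-of-squares factorizations. This keeps the argument within elementary Euclidean geometry, consistent with the spirit of the rest of the paper, and in particular avoids invoking any properties of the trigonometric functions that are themselves still under construction here.

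Concretely, I would place the side of length $c$ along a base and let $h$ be the length of the altitude dropped from the opposite vertex, meeting the line of the base at a foot that splits the base into directed segments of signed lengths $p$ and $q$ with $p + q = c$, where $p$ is adjacent to the side of length $b$ and $q$ to the side of length $a$. The two right triangles formed by the altitude give $b^2 = h^2 + p^2$ and $a^2 = h^2 + q^2$ by the Pythagorean theorem. Subtracting yields $b^2 - a^2 = (p-q)(p+q) = c(p-q)$, and together with $p + q = c$ this solves to $2cp = c^2 + b^2 - a^2$. Since $A(T) = \tfrac12 c h$, I would then write $16A(T)^2 = 4c^2 h^2 = 4c^2 b^2 - (2cp)^2 = (2cb)^2 - (c^2 + b^2 - a^2)^2$.

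From here the argument is pure algebra. Factoring the difference of squares gives $16A(T)^2 = \bigl(2cb - (c^2+b^2-a^2)\bigr)\bigl(2cb + (c^2+b^2-a^2)\bigr) = \bigl(a^2 - (c-b)^2\bigr)\bigl((c+b)^2 - a^2\bigr)$, and a second application factors each term into linear pieces:
\begin{flalign*}
&\dsp 16A(T)^2 = (a - c + b)(a + c - b)(c + b - a)(c + b + a). &
\end{flalign*}
Substituting $a + b + c = 2s$, $-a + b + c = 2(s-a)$, $a - b + c = 2(s-b)$, and $a + b - c = 2(s-c)$ turns the right-hand side into $16\,s(s-a)(s-b)(s-c)$, and taking the positive square root yields the claimed formula.

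The one point requiring care, and the step I would treat as the main obstacle, is the geometric set-up rather than the algebra: when $T$ is obtuse the foot of the altitude falls outside the segment of length $c$, so that $p$ or $q$ is negative and the naive picture of the base being partitioned into two positive pieces breaks down. Using \emph{directed} lengths along the base repairs this, since the relations $p + q = c$, $b^2 = h^2 + p^2$, and $a^2 = h^2 + q^2$ remain valid verbatim, and the resulting identity for $16A(T)^2$ is then entirely case-free. I would also remark that the formula $A(T) = \tfrac12 c h$ is the only external fact used, so the proof rests on nothing beyond the Pythagorean theorem and the elementary base-times-height formula for the area of a triangle.
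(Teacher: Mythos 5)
Your proof is correct, but there is nothing in the paper to compare it against: the paper states Heron's Theorem without proof, treating it as a classical result to be quoted, and then uses it as a black box in the proof of Proposition~\ref{Proposition:PropositionTwoSix} (to compute the areas $\underline{\alpha}_n(m)$ of the isosceles triangles with two unit sides) and again in the final area theorem. Your argument is the standard elementary derivation: the two Pythagorean relations $b^2 = h^2 + p^2$ and $a^2 = h^2 + q^2$ with $p+q=c$ give $2cp = c^2 + b^2 - a^2$, hence $16A(T)^2 = (2cb)^2 - (c^2+b^2-a^2)^2$, and two applications of the difference of squares yield $(a+b-c)(a-b+c)(b+c-a)(a+b+c) = 16\,s(s-a)(s-b)(s-c)$; the algebra checks out, and your use of signed lengths along the base correctly disposes of the obtuse case, where the foot of the altitude falls outside the base. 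Since the proof uses only the Pythagorean theorem and the base-times-height formula, it is consistent with the paper's stated goal of remaining within elementary Euclidean geometry, and would be a natural addition if the author wished to make the exposition self-contained on this point.
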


For any points $A$ and $B$ in the plane, denote by $\ell(\overline{AB})$ the length of the line segment $\overline{AB}$.  Denote by $h_n(m)$ the distance from a vertex of $G(m,n)$ to $\C$.

\begin{proposition}\label{Proposition:PropositionTwoSix}
Given $A_n$ and $a_n$ above, \begin{flalign}\label{4eqs}&\dsp (i)\; a_n = \frac{1}{2}p_n,\quad (ii)\; A_n = \frac{1}{2}P_n,\quad (iii)\; P_n = p_n,\quad  \text{and}\quad(iv)\quad A_n = a_n.&\end{flalign}
\end{proposition}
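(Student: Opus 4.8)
The plan is to separate the four identities into those that hold \emph{exactly} for every $m$ and the single genuine limiting assertion. Identities $(i)$ and $(ii)$ merely compare a polygon's area to its perimeter scaled by the relevant ``height,'' while $(iii)$ is the only claim that forces an inscribed and a circumscribed quantity to agree in the limit; $(iv)$ will then be automatic. The object that controls everything is the apothem $d_n(m)$, the distance from $O$ to a chord $\overline{PQ}$ of $\g{m}{n}$, and I claim all four identities drop out once one knows $d_n(m)\to 1$.

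First I would dispose of $(ii)$, which in fact holds before passing to the limit. An edge $\overline{AB}$ of $\G{m}{n}$ is tangent to $\C$ at its midpoint, so the radius to that midpoint is perpendicular to the edge and the distance from $O$ to the line $AB$ equals the radius $1$. Thus $\triangle AOB$ has base $L_n(m)$ and height $1$, giving $\ao_n(m)=\tfrac{1}{2}L_n(m)$ and therefore
\begin{flalign*}
&\dsp A_n(m)=2^mn\,\ao_n(m)=\tfrac{1}{2}\bigl(2^mnL_n(m)\bigr)=\tfrac{1}{2}P_n(m).&
\end{flalign*}
Letting $m\to\infty$ and invoking Proposition~\ref{pPaAn} yields $A_n=\tfrac{1}{2}P_n$. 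For the inscribed side I would apply Heron's Theorem to the isosceles triangle $\triangle POQ$, whose two equal sides are radii of length $1$ and whose base is $\ell_n(m)$; this produces $\au_n(m)=\tfrac{1}{2}\,\ell_n(m)\,d_n(m)$ with $d_n(m)=\sqrt{1-(\ell_n(m)/2)^2}$, and hence $a_n(m)=\tfrac{1}{2}p_n(m)\,d_n(m)$.

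The crux is then showing $d_n(m)\to 1$, which reduces to $\ell_n(m)\to 0$. Since $(p_n(m))$ converges by Proposition~\ref{pPaAn} it is bounded, and $\ell_n(m)=p_n(m)/(2^mn)\to 0$ as $m\to\infty$; consequently $d_n(m)\to 1$. Passing to the limit in $a_n(m)=\tfrac{1}{2}p_n(m)\,d_n(m)$ then gives $(i)$, namely $a_n=\tfrac{1}{2}p_n$. For $(iii)$ I would compare the two polygons edge by edge along a common bisecting radius: the right triangle with vertices $O$, the midpoint of $\overline{PQ}$, and $P$ is similar to the right triangle with vertices $O$, the tangent point of the corresponding circumscribed edge, and the adjacent circumscribed vertex, since both share the angle at $O$ cut off by that radius and each edge meets the radius at a right angle. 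The similarity ratio is $d_n(m):1$, so $\ell_n(m)/L_n(m)=d_n(m)$, whence $L_n(m)=\ell_n(m)/d_n(m)$ and $P_n(m)=p_n(m)/d_n(m)$. Letting $m\to\infty$ and using $d_n(m)\to 1$ gives $P_n=p_n$, which is $(iii)$. Finally $(iv)$ is immediate by combining the three: $A_n=\tfrac{1}{2}P_n=\tfrac{1}{2}p_n=a_n$.

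The only step that is not a formal manipulation is $d_n(m)\to 1$, and this is precisely where the earlier monotonicity and boundedness results pay off: boundedness of the perimeter sequences forces the edge lengths to zero, which is exactly what makes both the apothem and the circumscribed-to-inscribed edge ratio degenerate to $1$. I expect the main obstacle, to the extent there is one, to be carrying out the similar-triangle step for $(iii)$ without covertly reintroducing the trigonometric identities the paper is building from scratch; phrasing $d_n(m)$ purely as a Euclidean distance computed through Heron's Theorem (or the Pythagorean theorem) rather than as a cosine keeps the argument elementary and self-contained.
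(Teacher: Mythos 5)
Your handling of $(i)$, $(ii)$, and $(iv)$ matches the paper's: $(ii)$ is the exact identity $\ao_n(m)=\tfrac{1}{2}L_n(m)$ from the unit height to a tangent edge, $(i)$ follows from Heron's Theorem together with $\ell_n(m)\to 0$ (forced, as you say, by convergence of $p_n(m)=2^mn\,\ell_n(m)$), and $(iv)$ is formal. Where you genuinely diverge is $(iii)$, and your route is correct and markedly shorter. The paper never compares $L_n(m)$ with $\ell_n(m)$ at the same level; it instead introduces $h_n(m)$, the distance from a vertex of $\G{m}{n}$ to $\C$, proves via the tangent-line construction of Figure~\ref{f} that $h_n(m)<\left(\tfrac{1}{3}\right)^m h_n(0)$, and then squeezes $P_n(m)-p_n(m+1)$ between $0$ and $2n\left(\tfrac{2}{3}\right)^m h_n(0)$ using the triangle inequality $h_n(m)+\ell_n(m+1)>\tfrac{1}{2}L_n(m)$. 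You instead align the two regular $2^mn$-gons and observe that the half-edge right triangles --- apothem, half-chord, radius for $\g{m}{n}$, versus radius, half-edge, segment to a vertex for $\G{m}{n}$ --- are similar with ratio $d_n(m):1$, giving $L_n(m)=\ell_n(m)/d_n(m)$ and hence $P_n(m)=p_n(m)/d_n(m)\to p_n$. The one step you should spell out is why the angles at $O$ agree: for the inscribed polygon the perpendicular from $O$ to a chord bisects its central angle, while for the circumscribed polygon Lemma~\ref{Lemma:LemmaMain} (the congruence of $\triangle OAB$ and $\triangle OCA$) shows the segment from $O$ to a vertex bisects the angle between adjacent tangent points; since each polygon partitions the full angle at $O$ into $2^mn$ congruent pieces, the two half-angles coincide. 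With that supplied your argument is entirely Euclidean, and it buys a cleaner proof of $(iii)$ at the cost of the explicit decay rate $h_n(m)=O(3^{-m})$, which the paper does not use elsewhere anyway.
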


\begin{proof}
Since the sequences $(p_n(m))$ and $(P_n(m))$ are both convergent and are respectively equal to $(2^mn\ell_n(m))$ and $(2^mnL_n(m))$, both $\ell_n(m)$ and $L_n(m)$ tend to zero as $m$ tends to infinity.  Heron's theorem implies that \begin{flalign*}&\dsp{\underline{\alpha}}_n(m) = \sqrt{\left(1 + \tfrac{1}{2}\ell_n(m)\right)\left(1-\tfrac{1}{2}\ell_n(m)\right)\left(\tfrac{1}{2}\ell_n(m)\right)^2} = \tfrac{1}{2}\ell_n(m)\sqrt{1-\tfrac{1}{4}\ell_n(m)^2},&\end{flalign*}  and so \begin{flalign}\label{LittleAreatoPerimeter}&\dsp a_n(m) = 2^mn\tfrac{1}{2}\ell_n(m)\sqrt{1-\tfrac{1}{4}\ell_n(m)^2}= \tfrac{1}{2}p_n(m)\sqrt{1-\tfrac{1}{4}\ell_n(m)^2}\to \tfrac{1}{2}p_n.&\end{flalign} Since $\C$ is a unit circle, $\overline{\alpha}_n(m)$ is equal to $\tfrac{1}{2}L_n(m)$ and so \begin{flalign}\label{BigAreatoPerimeter}&\dsp A_n(m) = 2^mn\overline{\alpha}_n(m) = 2^{m-1}nL_n(m) = \tfrac{1}{2}P_n(m)\to \tfrac{1}{2}P_n.&\end{flalign}

Suppose that $G$ is a vertex of $\G{m}{n}$, that $A$ and $H$ are points on $\C$ where the edges of $\G{m}{n}$ with endpoint $G$ are tangent to $\C$, and that $\triangle GAH$ is counterclockwise oriented (Figure~\ref{f}).  Line segments $\overline{AG}$ and $\overline{GH}$ are half edges of $\G{m}{n}$, so $\ell(\overline{AG})$ is equal to $\tfrac{1}{2}L_n(m)$.  Take $N$ to be a point of $\C$ so that $\overline{AN}$ and $\overline{NH}$ are edges of $\g{m+1}{n}$.  The intersection, $C$, of lines tangent to $\C$ at $A$ and $N$ is a vertex of $\G{m+1}{n}$.  Take $P$ to be the intersection of $\overline{AH}$ with $\overline{OP}$ and $M$ to be the intersection of $\overline{AN}$ with $\overline{OC}$.  Let $I$ be the point of $\C$ that intersects $\overline{OC}$.  Lemma~\ref{Lemma:LemmaMain} implies that $\angle GPA$ and $\angle CMA$ are right angles.   Denote by $B$ intersection of line tangent to $\C$ at $A$ with line tangent to $\C$ at $I$.  Denote by $J$ the intersection of the line tangent to $\C$ at $I$ with line tangent to $\C$ at $N$.  The points  $B$ and $J$ are neighboring vertices of $\G{m+2}{n}$.  Angle $\angle GNA$ is obtuse because $\angle GNC$ is right, hence \begin{flalign*}&\dsp\tfrac{1}{2}L_n(m) > \ell_n(m+1).&\end{flalign*}  
 
Extend $\overline {BJ}$ to meet $\overline{OG}$ at a point $L$.  Lemma \ref{Lemma:LemmaMain} implies that $\angle OMN$ and $\angle OIJ$ are both right angles and so $\overline{BJ}$ and $\overline{AN}$ are parallel.  The point $L$ therefore lies between $N$ and $G$. The line segment $\overline{NG}$ has length $h_n(m)$ and $\ell(\overline{IC})$ is equal to $h_n(m+1)$.

\medskip
 
\dwpicf

\medskip

Line segments $\overline{BI}$, $\overline{IJ}$, and $\overline{JN}$ are congruent as half edges of $\G{m+2}{n}$. The hypotenuse of the right triangle $\triangle JNL$ is $\overline{JL}$, so $\ell(\overline{JL})$ is greater than $\ell(\overline{BI})$. Let $K$ be a point on $\overline{JL}$ such that $\overline{JK}$ and $\overline{BI}$ are congruent.  Furthermore, let $E$ and $F$ be points lying on $\overline{AG}$ such that $\overline{KE}$, $\overline{LF}$, and $\overline{IC}$ are parallel. The similarity of $\triangle BIC$, $\triangle BKE$, and $\triangle BLF$ and the equality of $\ell(\overline{BK})$ and $3\ell(\overline{BI})$ together imply that $\ell(\overline{KE})$ is equal to $3\ell(\overline{IC})$.  Since $\angle OAC$ is right, $\angle OCG$ is obtuse and so $\angle LFG$ is as well, implying that $\ell(\overline{LG})$ is greater than $\ell(\overline{LF})$. Since $\ell(\overline{BL})$ is greater than $\ell(\overline{BK})$, \begin{flalign*}&\dsp h_n(m) = \ell(\overline{NG}) > \ell(\overline{LG}) > \ell(\overline{LF}) > \ell(\overline{KE}) = 3\ell(\overline{IC}) = 3h_n(m+1).&\end{flalign*}  Since $m$ was arbitrary, \begin{flalign*}&\dsp h_n(m) < \left(\tfrac{1}{3}\right)^{m}h_n(0).&\end{flalign*} 

The triangle inequality implies that $h_n(m) + \ell_n(m+1)$ is greater than $\tfrac{1}{2}L_n(m)$, hence \begin{flalign*}&\dsp 0 < \tfrac{1}{2}L_n(m) - \ell_n(m+1) < h_n(m) < \left(\tfrac{1}{3}\right)^mh_n(0),&\end{flalign*} and so \begin{flalign}\label{new:eq:a}&\dsp 0 < 2^mnL_n(m) - 2^{m+1}n\ell_n(m+1) = P_n(m) - p_n(m+1) < 2n\left(\tfrac{2}{3}\right)^mh_n(0).&\end{flalign}  The estimate \eqref{new:eq:a} implies that \begin{flalign*}&\dsp 0 < P_n(m) - p_n(m)  = (P_n(m) - p_n(m+1)) + (p_n(m+1) - p_n(m))&\\&\dsp \phantom{0} <  2n\left(\tfrac{2}{3}\right)^mh_n(0) + (p_n(m+1) - p_n(m)) \to 0.&\end{flalign*}  The difference $P_n(m)-p_n(m)$ tends to zero as $n$ tends to infinity, hence $P_n$ is equal to $p_n$.  Of course, (\ref{Proposition:PropositionTwoSix}.iv) follows immediately from \eqref{BigAreatoPerimeter} and \eqref{LittleAreatoPerimeter}.
\end{proof}

\section{Edge Length Comparison Theorems}

Comparing the edge lengths of inscribed and circumscribed segments corresponding to different regular subdivisions of an arc is the key to proving that the definition of $\pi$ is independent of any approximation scheme.  This section will present such a comparison. 

\begin{definition}
A \emph{\lpar counter\;\rpar clockwise oriented partition $P$ of an arc $\A$} is a finite sequence of points of $\A$ that is (counter)clockwise ordered, and the first and last points of $P$ are the endpoints of $\A$.  Such a finite sequence is \emph{regular} if all adjacent points of $P$ are equidistant. 
\end{definition}

Temporarily ignore the previous restrictions on the natural numbers $m$ and $n$.

\medskip

\dwpicg

\bigskip

Suppose that $m$ and $n$ are natural numbers with $n$ greater than $m$.  Let $\mathscr A$ be an arc of $\C$ that is less than half of $\C$ and let $(P_1, \dots, P_{n+1})$ be a regular clockwise oriented partition of $\mathscr A$. Let $B$ be the point of intersection of the lines tangent to $\C$ at $P_1$ and $P_{n+1}$ and let $A$ be the point of intersection of the lines tangent to $\C$ at $P_1$ and $P_{M+1}$.  Denote by $\ell_m$, $\ell_n$, $L_m$, and $L_n$ the respective lengths of $\overline{P_1P_{m+1}}$, $\overline{P_1P_{n+1}}$, $\overline{P_1B}$ and $\overline{P_1A}$.  We will show that \begin{flalign*}&\dsp n\ell_m > m\ell_n \quad {\rm and}\quad nL_m < mL_n.&\end{flalign*}

\subsection{General Symmetry Considerations For Polygonal Segments}

Assume that $A$, $B$, $C$, and $D$ belong to a collection of points that form a regular partition of $\mathscr A$ (Figure~\ref{h}), that $A$ and $C$ are adjacent, that $B$ and $D$ are adjacent, and that $C$ and $D$ lie on the same side of the line $\overline{AB}$ and opposite the side where $O$ lies.  Suppose furthermore that the quadruple $(A,B,D,C)$ forms a counterclockwise oriented partition of $\A$.

\medskip

\noindent\begin{minipage}{.49\linewidth}
\dwpich
\end{minipage}
\begin{minipage}{.49\linewidth}
\dwpichh
\end{minipage}

\bigskip

Standard arguments using the SSS theorem imply Proposition~\ref{Proposition:IsocTrap}, whose proof is left as an exercise.
 
\begin{proposition}\label{Proposition:IsocTrap}
The quadrilateral $\square ABDC$ \lpar Figure~\ref{h} \rpar\,is an isosceles trapezoid and $\overline{AB}$ and $\overline{CD}$ are parallel. Furthermore, in the degenerate case \lpar Figure~\ref{hh} \rpar\,when $C$ is equal to $D$, angles $\angle BAC$ and $\angle ABC$ are congruent and $\overline{CO}$ bisects $\overline{AB}$.
\end{proposition}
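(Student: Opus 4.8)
The plan is to reduce the whole statement to two chord equalities followed by a single application of the SSS congruence theorem. Write the ambient regular partition of $\mathscr A$ as $Q_1, \dots, Q_N$ in counterclockwise order. The orientation and side hypotheses force the four points to appear, in counterclockwise order along $\mathscr A$, as $B, D, \dots, C, A$, so I may write $B = Q_k$, $D = Q_{k+1}$, $C = Q_i$, $A = Q_{i+1}$ for some $k < i$; thus $\overline{AC}$ and $\overline{BD}$ are the two single-step ``end edges'' of the chain while $\overline{AB}$ and $\overline{CD}$ are the long and short transversal chords. Since the partition is regular, consecutive points subtend equal arcs, and equal arcs subtend equal chords (all arcs involved are less than a semicircle because $\mathscr A$ is less than half of $\C$). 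This immediately gives $\ell(\overline{AC}) = \ell(\overline{BD})$. The one genuinely substantive observation is that $\ell(\overline{BC}) = \ell(\overline{AD})$: the arc from $B = Q_k$ to $C = Q_i$ and the arc from $A = Q_{i+1}$ to $D = Q_{k+1}$ each consist of exactly $i - k$ subdivisions, because passing from $\overline{BC}$ to $\overline{AD}$ shifts both endpoints one step counterclockwise and so preserves the number of steps between them.

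With these two equalities in hand I would apply SSS to $\triangle ABC$ and $\triangle BAD$: they share the side $\overline{AB}$, while $\ell(\overline{CA}) = \ell(\overline{DB})$ and $\ell(\overline{BC}) = \ell(\overline{AD})$ supply the other two pairs of equal sides. The congruence (correspondence $A \leftrightarrow B$, $B \leftrightarrow A$, $C \leftrightarrow D$) yields $\angle CAB \cong \angle DBA$, so the equal legs $\overline{AC}$ and $\overline{BD}$ meet the base $\overline{AB}$ in equal angles. To turn equal legs and equal base angles into the trapezoid conclusion, drop perpendiculars from $C$ and $D$ to the line $\overline{AB}$, with feet $C'$ and $D'$; the right triangles $\triangle ACC'$ and $\triangle BDD'$ are congruent (equal hypotenuses and equal acute angles), so $\ell(\overline{CC'}) = \ell(\overline{DD'})$. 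Because $C$ and $D$ lie on the same side of $\overline{AB}$, the segments $\overline{CC'}$ and $\overline{DD'}$ are equal and parallel, making $CC'D'D$ a parallelogram and forcing $\overline{CD} \parallel \overline{AB}$. Equal legs together with parallel bases are exactly the definition of an isosceles trapezoid. Equivalently, one may observe that the perpendicular bisector of $\overline{AB}$ passes through $O$ and, by the congruence, carries $C$ onto $D$, which delivers both the parallelism and the reflective symmetry at once.

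For the degenerate case $C = D$ (which is $i = k+1$), the two end edges collapse so that $C$ is adjacent to both $A$ and $B$; hence $\ell(\overline{CA}) = \ell(\overline{CB})$ and $\triangle ABC$ is isosceles, giving $\angle BAC \cong \angle ABC$. Since $\ell(\overline{CA}) = \ell(\overline{CB})$ places $C$ on the perpendicular bisector of $\overline{AB}$ and $\ell(\overline{OA}) = \ell(\overline{OB}) = 1$ places $O$ there too, the line $\overline{CO}$ is that perpendicular bisector and therefore bisects $\overline{AB}$. The main obstacle, and really the only step that is not mechanical, is the index-counting that produces $\ell(\overline{BC}) = \ell(\overline{AD})$; once that symmetry of the partition is recorded, everything else is routine SSS congruence and the elementary characterization of an isosceles trapezoid.
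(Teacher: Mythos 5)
Your proof is correct, and it is precisely the ``standard argument using the SSS theorem'' that the paper alludes to when it leaves this proposition as an exercise: the index-counting gives $\ell(\overline{AC})=\ell(\overline{BD})$ and $\ell(\overline{BC})=\ell(\overline{AD})$, SSS applied to $\triangle ABC$ and $\triangle BAD$ yields the reflective symmetry across the perpendicular bisector of $\overline{AB}$, and the degenerate case follows from the isosceles triangle $\triangle ABC$ together with $O$ and $C$ both lying on that bisector. No gaps; this matches the intended proof.
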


\medskip
 
\dwpici

\bigskip

Suppose that $\mathscr A$ is an arc of $\C$ that is less than half of $\C$ and suppose that $(P_1, \dots, P_n)$ is a regular clockwise orientated partition with $n$ greater than 3 (Figure~\ref{i}). For each natural number $i$ in $[1, n]$, let $L_i$ be the line tangent to $\A$ and intersecting $P_i$. Since $\mathscr A$ is less than half of a circle, Lemma \ref{Lemma:LemmaMain} implies that the lines $L_i$ and $L_j$ intersect for each $i$ not equal to $j$; call this point of intersection $P_{i,j}$. Notice that  $P_{i,j}$ is equal to $P_{j,i}$. The key theorem of the next section is to prove that \begin{flalign*}&\dsp j > i \quad \text{implies that}\quad  \ell(\overline{P_{1,i}P_{1,i+1}}) < \ell(\overline{P_{1,j}P_{1,j+1}}).&\end{flalign*}  

Refer to Figure \ref{j}.  Suppose that $(A, B, C, D)$ is a clockwise ordered partition of an arc $\A$ of $\C$ and that $\overline{AB}$ has the same length as $\overline{CD}$. Let $L_A$, $L_B$, $L_C$ and $L_D$ be lines tangent to $\mathscr A$ that intersect $A$, $B$, $C$, and $D$ respectively.  Since $\mathscr A$ is less than half of a circle,  Lemma \ref{Lemma:LemmaMain} implies that $L_A$ and $L_D$ intersect at a point $P_{AD}$, $L_A$ and $L_C$ intersect at a point $P_{AC}$, $L_B$ and $L_D$ intersect at a point $P_{BD}$, and $L_B$ and $L_C$ intersect at a point $P_{BC}$. The SAS and SSS theorems along with Proposition~\ref{Proposition:IsocTrap} and Lemma~\ref{Lemma:LemmaMain} imply the following proposition, whose proof is left to the reader as a standard exercise.

\medskip

\dwpicj

\bigskip

\begin{proposition}\label{Proposition:Prop3point2}
The line segment $\overline{P_{AD}P_{BC}}$ can be extended to a ray, $L$, that meets $O$ and bisects and is perpendicular to $\overline{P_{AC}P_{BD}}$, $\overline{AD}$, and $\overline{BC}$. Furthermore, $L$ bisects $\angle P_{AC}P_{AD}P_{BD}$. Finally, triangles $\triangle P_{AD}P_{AC}P_{BC}$ and $\triangle P_{AD}P_{BD}P_{BC}$ are congruent, as are $\triangle P_{AD}P_{BC}P_{AB}$ and $\triangle P_{AD}P_{BC}P_{CD}$.
\end{proposition}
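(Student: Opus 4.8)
The plan is to produce a single reflective symmetry of the whole configuration and then read off each assertion as an immediate consequence. Since $\overline{AB}$ and $\overline{CD}$ are equal chords of $\C$, they subtend arcs of equal measure; because $(A,B,C,D)$ is clockwise ordered on $\mathscr A$, it follows that the midpoint $M$ of the arc from $A$ to $D$ is also the midpoint of the arc from $B$ to $C$. Hence the line through $O$ and $M$ is the common perpendicular bisector of the chords $\overline{AD}$ and $\overline{BC}$. First I would verify that the reflection $\sigma$ across this line is a symmetry of the figure: it fixes $O$ and $M$, interchanges $A$ with $D$ and $B$ with $C$, and—because a reflection fixing $\C$ carries the tangent at a point to the tangent at its image—interchanges $L_A \leftrightarrow L_D$ and $L_B \leftrightarrow L_C$.

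With $\sigma$ in hand, the six tangent intersections behave predictably. Swapping $L_A \leftrightarrow L_D$ forces $\sigma$ to fix their intersection $P_{AD}$, and likewise it fixes $P_{BC}$; hence both lie on the line through $O$ and $M$, and extending $\overline{P_{AD}P_{BC}}$ gives the ray $L$ through $O$ of the first assertion. The swaps $L_A\leftrightarrow L_D$ and $L_B\leftrightarrow L_C$ send $P_{AC}=L_A\cap L_C$ to $L_D\cap L_B=P_{BD}$ and send $P_{AB}=L_A\cap L_B$ to $L_D\cap L_C=P_{CD}$. Since the fixed line of a reflection perpendicularly bisects every segment joining a point to its image, $L$ perpendicularly bisects $\overline{P_{AC}P_{BD}}$, $\overline{AD}$, and $\overline{BC}$; and since $\sigma$ fixes the vertex $P_{AD}$ while exchanging the rays toward $P_{AC}$ and $P_{BD}$, the line $L$ bisects $\angle P_{AC}P_{AD}P_{BD}$. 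Finally, $\sigma$ carries $\triangle P_{AD}P_{AC}P_{BC}$ onto $\triangle P_{AD}P_{BD}P_{BC}$ and $\triangle P_{AD}P_{BC}P_{AB}$ onto $\triangle P_{AD}P_{BC}P_{CD}$, and an isometry preserves congruence, which yields the two stated triangle congruences.

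If one wishes to stay within the synthetic, transformation-free style that the statement suggests, the same conclusions can be obtained directly: Lemma~\ref{Lemma:LemmaMain} gives equal tangent lengths from each external point and tells us that $\overline{OP_{XY}}$ bisects $\angle XOY$ for each relevant pair, Proposition~\ref{Proposition:IsocTrap} exhibits $\square ABDC$ as an isosceles trapezoid with $\overline{AB}\parallel\overline{CD}$, and these equalities of radii and tangent lengths feed into SAS and SSS to yield the pairwise triangle congruences, from which the perpendicular-bisector and angle-bisector claims follow. The main obstacle, and the only place the hypotheses are essentially used, is the first step: confirming that equality of the chords $\overline{AB}$ and $\overline{CD}$ forces a \emph{single} reflection to swap $A\leftrightarrow D$ and $B\leftrightarrow C$ at once, and that the assumption that $\mathscr A$ is less than half of $\C$ guarantees that all six tangent-line intersections actually exist and lie as drawn. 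Once these are settled, every remaining assertion is a mechanical application of the symmetry.
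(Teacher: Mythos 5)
Your argument is correct, but it proceeds by a different route than the one the paper indicates. The paper does not actually write out a proof of this proposition: it asserts that the SAS and SSS theorems, together with Proposition~\ref{Proposition:IsocTrap} and Lemma~\ref{Lemma:LemmaMain}, imply the result, and leaves the details as a standard exercise --- that is, the intended argument is the purely synthetic one you sketch only in your closing paragraph (equal tangent lengths from each external point, the isosceles trapezoid $\square ABDC$, and a chain of triangle congruences from which the perpendicular-bisector and angle-bisector claims are extracted). Your primary argument instead manufactures a single reflection $\sigma$ across the line through $O$ and the common arc-midpoint $M$, verified via the arc computation $b-a=d-c \Rightarrow \tfrac{a+d}{2}=\tfrac{b+c}{2}$, and then reads off every assertion from the facts that $\sigma$ fixes $P_{AD}$ and $P_{BC}$, swaps $P_{AC}\leftrightarrow P_{BD}$ and $P_{AB}\leftrightarrow P_{CD}$, and that the axis of a reflection perpendicularly bisects each point--image segment and bisects the angle between a ray and its image at a fixed vertex. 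This is sound: the only hypotheses you need are the equality of the chords (to get the common perpendicular bisector), the ordering of $(A,B,C,D)$ (so $M$ is simultaneously the midpoint of arcs $AD$ and $BC$), and the standing assumption that $\mathscr A$ is less than half of $\C$ (so all six tangent intersections exist, which the paper establishes via Lemma~\ref{Lemma:LemmaMain} before stating the proposition). What your approach buys is economy and unity --- one symmetry yields all five conclusions at once, with no case-by-case congruence bookkeeping; what it costs is a step outside the paper's deliberately Euclid-style toolkit of SSS/SAS arguments, which the author is at pains to keep ``maximally accessible to a contemporary of Archimedes.'' Since you also indicate how to recover the result in that synthetic style, the proposal is complete on both counts.
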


Denote by $m(\angle ABC)$ the degree measure of $\angle ABC$.  Refer to Figure~\ref{i} for Proposition~\ref{Proposition:Anglemeasure}.  

\begin{proposition}\label{Proposition:Anglemeasure}
Suppose that $n$ is in $\mathds N_{\ge 3}$ and $(P_1, \dots, P_n)$ is a regular clockwise oriented partition of an arc $\mathscr A$ that is less than half of $\C$.  If $m(\angle P_iOP_{i+1})$ is equal to $\theta^\circ$, then \begin{flalign*}&\dsp (1)\quad m(\angle P_{1,n-1}P_{1,n}P_{n,2}) = (180 - n\theta)^\circ;&\\&\dsp (2)\quad m(\angle P_{2,n-1}P_{1,n-1}P_{1,n}) = m(\angle P_{n,1}P_{n,2}P_{2,n-1}) = (n-1)\theta^\circ; &\\&\dsp (3)\quad m(\angle P_{n,2}P_{2,n-1}P_{1,n-1}) = 180^\circ - (n-2)\theta^\circ.&\end{flalign*}
\end{proposition}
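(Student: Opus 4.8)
The plan is to reduce all three identities to one observation about tangent lines. If $L_i$ and $L_j$ are the tangents to $\C$ at partition points $P_i$ and $P_j$, then at their intersection $P_{i,j}$ the angle that opens \emph{toward} the arc equals $180^\circ$ minus the central angle $\angle P_iOP_j$. This is immediate from Lemma~\ref{Lemma:LemmaMain}: the radii $\overline{OP_i}$ and $\overline{OP_j}$ are perpendicular to $L_i$ and $L_j$, so the quadrilateral $OP_iP_{i,j}P_j$ has right angles at $P_i$ and $P_j$, and the (parallel-postulate) angle sum forces the angle at $P_{i,j}$ to be $180^\circ-m(\angle P_iOP_j)$. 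Since the partition is regular with $m(\angle P_kOP_{k+1})=\theta^\circ$, the central angle $\angle P_iOP_j$ is a multiple of $\theta$, so every angle in sight is a multiple of $\theta$ or a supplement of one.

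Before computing I would record two bookkeeping facts that are visible in Figure~\ref{i} but must be stated. First, each of the three angles has both of its sides lying along tangent lines: in $(1)$ the sides $\overline{P_{1,n}P_{1,n-1}}$ and $\overline{P_{1,n}P_{n,2}}$ lie on $L_1$ and $L_n$ (since $P_{1,n},P_{1,n-1}\in L_1$ and $P_{1,n},P_{n,2}\in L_n$); in $(2)$ the sides lie on $L_1$ and $L_{n-1}$; in $(3)$ on $L_2$ and $L_{n-1}$. Second, along a fixed tangent $L_i$ the intersection points $P_{i,j}$ occur in the order dictated by $|i-j|$: the farther $P_j$ is from $P_i$ along the arc, the farther $P_{i,j}$ lies from $P_i$ on $L_i$. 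This monotone ordering is exactly what tells me, at each vertex, whether a given side points \emph{toward} the circle or \emph{away} from it, and hence whether the angle I want is the toward-the-arc angle $180^\circ-m(\angle P_iOP_j)$ or its supplement (equivalently its vertical angle). I would deduce the ordering either from the congruences of Lemma~\ref{Lemma:LemmaMain} together with the isosceles-trapezoid structure of Proposition~\ref{Proposition:IsocTrap}, or by observing that $\ell(\overline{OP_{i,j}})$ is a strictly increasing function of the half-angle $\tfrac12 m(\angle P_iOP_j)$.

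With these in hand the three parts are short. For $(1)$, both sides point back toward the arc: $P_{1,n-1}$ lies between $P_1$ and $P_{1,n}$ on $L_1$ and $P_{n,2}$ lies between $P_n$ and $P_{1,n}$ on $L_n$, so the angle is precisely the toward-the-arc angle of $L_1$ and $L_n$, namely $180^\circ-m(\angle P_1OP_n)$. For $(2)$, the side toward $P_{1,n}$ runs \emph{outward} along $L_1$ (because $P_{1,n}$ lies beyond $P_{1,n-1}$), while the side toward $P_{2,n-1}$ runs inward along $L_{n-1}$; the outward side is the opposite ray to the inward side toward $P_1$, so the angle is the supplement of the toward-the-arc angle of $L_1$ and $L_{n-1}$, again a multiple of $\theta$. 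The equality of the two angles in $(2)$ is then the reflection symmetry of the whole configuration across the perpendicular bisector of $\overline{P_1P_n}$ through $O$, which interchanges $L_1\leftrightarrow L_n$ and $L_2\leftrightarrow L_{n-1}$ while fixing $P_{1,n}$ and $P_{2,n-1}$; this axis is furnished by Proposition~\ref{Proposition:Prop3point2} applied to $(P_1,P_2,P_{n-1},P_n)$. For $(3)$, both sides run outward (along $L_2$ and $L_{n-1}$), so the angle equals the vertical angle of the toward-the-arc angle of $L_2$ and $L_{n-1}$, hence $180^\circ$ minus the relevant central angle. In each case I then read off that central angle as the appropriate multiple of $\theta$ to obtain the stated measure.

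The genuinely delicate step, and the one I would write out most carefully, is the orientation bookkeeping of the preceding paragraph: the integer coefficient of $\theta$ in each formula is determined \emph{entirely} by which ray of each tangent bounds the angle, so the whole proof rests on the monotone ordering of the points $P_{i,j}$ along each $L_i$ and on correctly distinguishing ``toward the arc'' from ``away from it.'' By contrast the angle-chase itself is a one-line application of the external-point computation, and the symmetry giving the two equal angles in $(2)$ is immediate from Proposition~\ref{Proposition:Prop3point2}. Thus the only real work is in making the betweenness relations rigorous rather than reading them off Figure~\ref{i}.
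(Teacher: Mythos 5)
Your proposal is correct and rests on the same key computation as the paper: the quadrilateral $OP_iP_{i,j}P_j$ has right angles at $P_i$ and $P_j$, so the tangent--tangent angle at $P_{i,j}$ opening toward the arc is $180^\circ$ minus the central angle, and (1) and (2) are then obtained exactly as in the paper ((2) as the supplement along $L_1$, with the second equality from the symmetry of Proposition~\ref{Proposition:Prop3point2}). The only divergence is part (3), which the paper deduces from (1) and (2) via the $360^\circ$ angle sum of the quadrilateral $P_{1,n-1}P_{1,n}P_{n,2}P_{2,n-1}$, whereas you compute it directly as the vertical angle of the toward-the-arc angle of $L_2$ and $L_{n-1}$; both are one-line finishes. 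Your insistence on making the betweenness of the points $P_{i,j}$ along each tangent explicit is well placed --- the paper silently identifies, e.g., $\angle P_{1,n-1}P_{1,n}P_{n,2}$ with $\angle P_1P_{1,n}P_n$ by reading the ordering off Figure~\ref{i}, so your monotone-ordering argument supplies a detail the published proof leaves implicit.
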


\begin{proof}
Both $\angle P_{1,n}P_nO$ and $\angle OP_1P_{1,n}$ are right angles, implying that \begin{flalign*}&\dsp m(\angle P_nOP_1) + m(\angle P_1P_{1,n}P_n) = 180^\circ.&\end{flalign*}  Since $m(\angle P_nOP_1)$ is equal to $n\theta^\circ$, the fact that the sum of the interior angles of the quadrilateral formed by $P_1$, $P_n$, $O$ and $P_{1,n}$ is $360^\circ$ implies (1).  Since $P_1$, $P_{1,n-1}$ and $P_{1,n}$ lie on the same line, $\angle P_1P_{1,n-1}P_{2,n-1}$ and $\angle P_{2,n-1}P_{1,n-1}P_{1,n}$ are supplementary.  By (1), \begin{flalign*}&\dsp m(\angle P_{1,1}P_{1,n-1}P_{2,n-1})= (180 - (n-1)\theta)^\circ&\end{flalign*} and so \begin{flalign*}&\dsp m(\angle P_{1,n}P_{1,n-1}P_{2,n-1}) = (n-1)\theta^\circ.&\end{flalign*} Of course, an analogous argument proves the same result for $\angle P_{2,n-1}P_{n,2}P_{n}$, implying (2), although another argument is not necessary in light of Proposition~\ref{Proposition:Prop3point2}.  Finally, since the sum of angles of a quadrilateral is $360^\circ$, (3) follows from (1) and (2).
\end{proof}

\subsection{Length Estimates For Inscribed Polygonal Segments} Let $\mathscr A$ again be an arc of a unit circle that is less than half of the circle.  Suppose that  $(P_1, \dots, P_{n+1})$ is a regular clockwise oriented partition of $\mathscr A$ (Figures \ref{k} and \ref{l}). For each $P_i$ where $i$ is a natural number in $[2, n]$, there is a line perpendicular to $\overline{P_1P_{n+1}}$ that intersects $P_i$ at a point $q_i$ on $\overline{P_1P_{n+1}}$.

Figure \ref{k} shows an example where $n$ is even and Figure~\ref{l} shows the qualitative difference in an odd example. Set $q_1$ to be equal to $P_1$ and let $q_{n+1}$ be equal to $P_{n+1}$.  For each $i$ between $1$ and $n+1$, let $q_i$ be the point on $\overline{P_1P_{n+1}}$ so that $\overline{P_iq_i}$ is perpendicular to $\overline{P_1P_{n+1}}$.

\medskip

\noindent\begin{minipage}{.49\linewidth}
\dwpick
\end{minipage}
\begin{minipage}{.49\linewidth}
\dwpicl
\end{minipage}

\medskip

\begin{proposition}\label{Proposition:AverageLengthProp}
If $i$ and $j$ are natural numbers in $\left[1, \frac{n+1}{2}\right]$ with $i$ less than $j$, then
\begin{flalign*}&\dsp \ell(\overline{q_iq_{i+1}}) < \ell(\overline{q_jq_{j+1}}) \quad {\rm and}\quad \ell(\overline{q_iq_{i+1}}) = \ell(\overline{q_{n+1 -i}q_{n+2 - i}}).&\end{flalign*}
\end{proposition}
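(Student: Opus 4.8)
The plan is to realize each $\ell(\overline{q_iq_{i+1}})$ as the orthogonal projection of an edge of the inscribed polyline onto the chord $\overline{P_1P_{n+1}}$, and then to reduce both assertions to the behavior of the angle that each such edge makes with that chord. Since $(P_1,\dots,P_{n+1})$ is a regular partition, the edges $\overline{P_kP_{k+1}}$ are mutually congruent; write $s$ for their common length. By definition $q_k$ is the foot of the perpendicular from $P_k$ to the line $\overline{P_1P_{n+1}}$, so $\overline{q_kq_{k+1}}$ is exactly the projection of $\overline{P_kP_{k+1}}$ onto that line, whence $\ell(\overline{q_kq_{k+1}}) = s\cos\alpha_k$, where $\alpha_k$ denotes the acute angle between edge $k$ and the chord. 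First I would record that the feet $q_1,\dots,q_{n+1}$ occur along the chord in the order of their indices: because $\A$ is less than half of $\C$, the points $P_k$ sweep monotonically through an arc subtending central angle $n\theta^\circ$ with $n\theta<180$, and the perpendicular foot of a point of such an arc onto the chord moves monotonically as the point advances. This guarantees that the gaps are genuine, consistently oriented lengths.

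Next I would pin down $\alpha_k$. Every chord of $\C$ is perpendicular to the radius through the midpoint of the arc it subtends, so the angle between edge $k$ and the chord $\overline{P_1P_{n+1}}$ equals the angle between the radius bisecting arc $P_kP_{k+1}$ and the radius bisecting the whole arc from $P_1$ to $P_{n+1}$. Counting the equal central angles $\theta^\circ$ between successive partition points, this yields
\[
\alpha_k \;=\; \Bigl|\tfrac{n+1}{2}-k\Bigr|\,\theta^\circ \;=\; \tfrac{|n+1-2k|}{2}\,\theta^\circ .
\]
Because $\A$ is less than half of $\C$ we have $n\theta<180$, and hence every $\alpha_k$ is strictly less than $90^\circ$; the largest of them, $\alpha_1=\tfrac{n-1}{2}\theta^\circ$, is already acute. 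This identification is of the same flavor as Proposition~\ref{Proposition:Anglemeasure} and could alternatively be extracted from the angle bookkeeping recorded there.

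With $\alpha_k$ in hand both claims fall out. For $1\le i<j\le\frac{n+1}{2}$ one has $n+1-2k\ge 0$, so $\alpha_k=\bigl(\tfrac{n+1}{2}-k\bigr)\theta^\circ$ is nonnegative and strictly decreasing in $k$, with all values in $[0^\circ,90^\circ)$. A segment of fixed length $s$ projects onto a strictly longer segment of the chord the smaller its acute inclination, so $\ell(\overline{q_iq_{i+1}})=s\cos\alpha_i<s\cos\alpha_j=\ell(\overline{q_jq_{j+1}})$. For the symmetry, reflect the entire configuration across the perpendicular bisector of $\overline{P_1P_{n+1}}$; this line passes through $O$, fixes the chord setwise, and interchanges $P_k$ with $P_{n+2-k}$, hence $q_k$ with $q_{n+2-k}$. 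It therefore carries $\overline{q_iq_{i+1}}$ onto $\overline{q_{n+1-i}q_{n+2-i}}$, so the two have equal length; equivalently, $\alpha_{n+1-i}=\alpha_i$ directly from the displayed formula.

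The step I expect to be the main obstacle is supplying, in a manner faithful to the paper's trig-free and Archimedean aims, the comparison that a fixed-length segment making a smaller acute angle with the chord has a strictly larger projection. This is precisely the statement that the leg adjacent to a shrinking acute angle in a right triangle of fixed hypotenuse grows, which is the geometric content of the monotonicity of cosine on $[0^\circ,90^\circ)$; it must be proved by an elementary right-triangle (or monotone horizontal-coordinate) argument rather than by invoking analytic properties of the cosine, on pain of circularity. The same monotonicity underlies the ordering of the feet $q_k$ asserted at the outset, and one must take care that the hypothesis $n\theta<180$ is genuinely used to keep every inclination acute, so that this comparison applies uniformly across the whole first half of the partition.
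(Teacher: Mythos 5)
Your proof is correct, and it rests on the same mechanism as the paper's: each gap $\ell(\overline{q_kq_{k+1}})$ is a leg of a right triangle whose hypotenuse is the common edge length $s$, so everything reduces to comparing the acute inclinations of the edges to the chord $\overline{P_1P_{n+1}}$. Where you differ is in how that comparison is obtained. The paper never computes the inclination of an individual edge; it proves only the local comparison between consecutive edges, by extending $\overline{P_iP_{i+1}}$ and $\overline{q_{i+2}P_{i+2}}$ to rays and transporting a congruent angle, and it derives the symmetry $\ell(\overline{q_iq_{i+1}}) = \ell(\overline{q_{n+1-i}q_{n+2-i}})$ from the isosceles trapezoids of Proposition~\ref{Proposition:IsocTrap}, via the auxiliary points $X_i$ on the segments $\overline{P_iP_{n+2-i}}$ parallel to the chord. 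You instead compute every inclination exactly, $\alpha_k = \tfrac{|n+1-2k|}{2}\theta^\circ$, from the fact that a chord is perpendicular to the radius bisecting its arc, and you obtain the symmetry from the reflection in the perpendicular bisector of $\overline{P_1P_{n+1}}$; your check that $n\theta < 180$ forces $\alpha_1 = \tfrac{n-1}{2}\theta^\circ < 90^\circ$ is right and is what also guarantees the feet $q_k$ advance monotonically along the chord. Your route buys an explicit formula for every gap and a transparent symmetry at the cost of more angle arithmetic; the paper's stays within incidence and congruence reasoning and needs only the relative comparison. Finally, the elementary fact you flag as the remaining obstacle --- that over a fixed hypotenuse a smaller acute angle yields a longer adjacent leg --- is exactly the step the paper itself invokes when it passes from the comparison of the angles $\angle P_iP_{i+1}X_i$ to the comparison of the legs $\ell(\overline{P_iX_i})$ in right triangles with equal hypotenuses, and it is supplied by Euclid I.24/I.25, so your argument is no less rigorous on this point than the paper's.
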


\begin{proof}
Proposition \ref{Proposition:IsocTrap} implies that each of the line segments $\overline{P_iP_{n+2-i}}$ is parallel to $\overline{P_1P_{n+1}}$.  Since $\overline{P_iq_i}$ and $\overline{P_iP_{n+2-i}}$ are perpendicular for $i$ in $\left[1, \frac{n}{2}\right]$, \begin{flalign*}&\dsp \ell(\overline{P_iP_{n+2-i}}) = \ell(\overline{q_iq_{n+2-i}}).&\end{flalign*}  Let $X_1$ be equal to $q_2$ and, for each $i$ in $\left[2, \frac{n}{2}\right]$, let $X_{i}$ be the point of intersection of $\overline{P_iP_{n+2-i}}$ and $\overline{P_{i+1}q_{i+1}}$. Let $X_{n-1}$ be equal to $q_{n}$ and, for each $i$ in $\left[\frac{n}{2} +1, n\right]$, let $X_{i}$ be the point of intersection of $\overline{P_{i+2}P_{n-i}}$ and $\overline{P_{i+1}q_{i+1}}$.  For each $i$ in $\left[1, \frac{n}{2}\right]$, Proposition~\ref{Proposition:IsocTrap} implies that  $\ell(\overline{P_iX_{i}})$ is equal to $\ell(\overline{X_{n-i}P_{n+2-i}})$, which is equal to $\ell(\overline{q_iq_{i+1}})$ and $\ell(\overline{q_{n+1-i}q_{n+2-i}})$.

Let $i$ and $j$ be in $\left[1, \frac{n}{2}\right]$ with $i$ less than $j$.  To show that $\ell(P_iX_{i})$ is less than $\ell(P_jX_{j})$, it suffices to show that if $i$ and $i+1$ are in $\left[1, \frac{n}{2}\right]$, then $\ell(P_iX_{i})$ is less than $\ell(P_{i+1}X_{i+1})$.   Extend the line segment $\overline{P_{i}P_{i+1}}$ to a ray, $R_1$, originating at $P_{i}$.  Extend the line segment $\overline{q_{i+2}P_{i+2}}$ to a ray, $R_2$, that meets $R_1$ at a point $A$.  The ray $R_2$ passes through the point $X_{i+1}$ and $l(\overline{X_{i+1}A})$ is greater than $l(\overline{X_{i+1}P_{i+2}})$, while angles $\angle X_{i+1}P_{i+1}A$ and $\angle X_{i}P_{i}P_{i+1}$ are congruent, implying that $\angle X_{i}P_{i}P_{i+1}$ is greater than $\angle X_{i+1}P_{i+1}P_{i+2}$.  Since $\triangle X_{i}P_{i}P_{i+1}$ and $\triangle X_{i+1}P_{i+1}P_{i+2}$ are both right triangles with hypotenuses that have the same length, $\angle P_{i}P_{i+1}X_{i}$ is greater than $\angle P_{i+1}P_{i+2}X_{i+1}$, implying that $\ell(\overline{P_{i}X_{i}})$ is less than $\ell(\overline{P_{i+1}X_{i+1}})$.  Since $i$ and $j$ are integers, the statement of the proposition is different when $n$ is odd and when $n$ is even.

Note that if $n$ is odd, then the above arguments show that $\overline{q_{\frac{n+1}{2}}q_{\frac{n+1}{2}+1}}$ is the longest of all of the segments $\overline{q_iq_{+1}}$.
\end{proof}

Since the lengths of the line segments $\overline{q_iq_{i+1}}$ are strictly increasing up to the midpoint of $\mathscr A$ for even $n$ and the middle segment for odd $n$, the following corollary is immediate.

\begin{corollary}
Let $k$ be the largest integer less than or equal to $\frac{n}{2}$.  If $n$ is greater than $2$ and $s$ is a natural number in $(1, k]$,  then 
\begin{flalign*}&\dsp\ell(\overline{q_1q_{s+1}}) < \frac{\ell(\overline{P_1P_{n+1}})}{n}s.&\end{flalign*}
\end{corollary}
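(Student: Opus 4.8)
The plan is to reduce the statement to a comparison of averages of the consecutive gap lengths cut out along the chord $\overline{P_1P_{n+1}}$. Write $d_i = \ell(\overline{q_iq_{i+1}})$ for $1 \le i \le n$. Because $\mathscr A$ is less than half of $\C$, the feet $q_1, \dots, q_{n+1}$ occur in their natural order along $\overline{P_1P_{n+1}}$ (the orthogonal projection onto a chord of a point traversing an arc of less than half a circle is monotone, since the radius never becomes parallel to the chord). Hence
\begin{flalign*}&\dsp \ell(\overline{q_1q_{s+1}}) = \sum_{i=1}^{s} d_i \quad {\rm and}\quad \ell(\overline{P_1P_{n+1}}) = \sum_{i=1}^{n} d_i,&\end{flalign*}
so the desired inequality is exactly the assertion that the average of the first $s$ gaps is strictly less than the average of all $n$ gaps.

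First I would record the two facts about $(d_i)$ supplied by Proposition~\ref{Proposition:AverageLengthProp}: the sequence is symmetric, $d_i = d_{n+1-i}$, and it is strictly increasing on its initial run, $d_1 < d_2 < \cdots$, up to the midpoint of $\mathscr A$. In particular, for every $s \le k = \lfloor n/2 \rfloor$ one has $d_1 \le \cdots \le d_s < d_{s+1}$, while every index $j$ in the central block $[s+1,\,n-s]$ satisfies $\min(j, n+1-j) \ge s+1$ and therefore $d_j \ge d_{s+1}$.

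The key step is a symmetry-folding of the total sum. Using $d_{n+1-i} = d_i$, the mirror block $\{d_{n-s+1}, \dots, d_n\}$ reproduces $\{d_1, \dots, d_s\}$, which gives
\begin{flalign*}&\dsp s\sum_{i=1}^{n} d_i - n\sum_{i=1}^{s} d_i = s\sum_{j=s+1}^{\,n-s} d_j - (n-2s)\sum_{i=1}^{s} d_i.&\end{flalign*}
Bounding $\sum_{i=1}^{s} d_i \le s\,d_s$ from above and $\sum_{j=s+1}^{n-s} d_j \ge (n-2s)\,d_{s+1}$ from below (both using the monotonicity just recorded, and $n-2s \ge 0$ since $s \le k$) yields
\begin{flalign*}&\dsp s\sum_{i=1}^{n} d_i - n\sum_{i=1}^{s} d_i \ge s(n-2s)\lpar d_{s+1} - d_s\rpar,&\end{flalign*}
and dividing by $n$ recovers the claim. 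Everything here — the monotone ordering of the $q_i$ and the two elementary block estimates — is routine once Proposition~\ref{Proposition:AverageLengthProp} is in hand.

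The one place that needs care, and which I expect to be the main obstacle, is the strictness at the top of the allowed range. The folding produces a \emph{strict} inequality precisely when the central block $[s+1,\,n-s]$ is nonempty, i.e.\ when $n-2s>0$, since then $d_{s+1}>d_s$. This covers the full range $s \in (1,k]$ whenever $n$ is odd, because $k=\tfrac{n-1}{2}<\tfrac n2$. When $n$ is even the top index $s=k=\tfrac n2$ makes the central block empty and the two sides coincide, so I would handle that single marginal index separately rather than through the generic folding estimate.
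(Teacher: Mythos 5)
Your main computation is correct and is exactly the averaging argument the paper has in mind: the paper offers no proof at all, asserting the corollary is ``immediate'' from Proposition~\ref{Proposition:AverageLengthProp}, and your folding identity together with the bounds $\sum_{i=1}^{s} d_i \le s\,d_s$ and $\sum_{j=s+1}^{n-s} d_j \ge (n-2s)\,d_{s+1}$ makes that assertion precise for every $s$ with $n-2s>0$. Your justification of the monotone ordering of the feet $q_i$ along the chord is also sound, since an arc of less than half the circle stays within $90^\circ$ of its midpoint and so its radius never becomes parallel to the chord.

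The one case you defer --- $n$ even and $s=k=\tfrac{n}{2}$ --- cannot in fact be ``handled separately'': the statement is false as written there. The point $P_{n/2+1}$ is the midpoint of the arc $\mathscr A$, so the perpendicular dropped from it to the chord lies along the perpendicular bisector of $\overline{P_1P_{n+1}}$ and meets the chord at its midpoint; hence $\ell(\overline{q_1q_{n/2+1}})=\tfrac{1}{2}\ell(\overline{P_1P_{n+1}})$ exactly, and the claimed strict inequality degenerates to an equality (equivalently, your folded expression is identically $0$ when $n=2s$, by the symmetry $d_i=d_{n+1-i}$ alone). So the corollary should either be stated with $\le$, or exclude the index $s=\tfrac{n}{2}$ when $n$ is even. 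This is a defect of the statement rather than of your argument, and it does not propagate: the only place the corollary is invoked, in the proof of Theorem~\ref{Theorem:TheoremTwo}, uses $s=n-m$ with $2m>n$, hence $n-2s>0$, which is precisely the range where your estimate yields the strict inequality.
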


\begin{remark}
The above corollary implies that the average length of a segment of $\overline{q_1q_{k+1}}$ is less than the average length of a segment of $\overline{q_1q_{n_1}}$ if the arc between $P_1$ and $P_{k+1}$ is less than half of $\mathscr A$.
\end{remark}

From this corollary follows the following key theorem.

\begin{theorem}\label{Theorem:TheoremTwo}
Suppose that $\A$ is an arc that is less than half of a circle and that $(P_1, \dots, P_{n+1})$ is a regular clockwise oriented partition of $\A$.  Denote by $\ell_n$ the length of the line segment $\overline{P_1P_{n+1}}$, and by  $\ell_m$ the length of $\overline{P_1P_{m+1}}$.  If $m$ is less than $n$, then  \begin{flalign*}&\dsp n\ell_m > m\ell_n.&\end{flalign*}
\end{theorem}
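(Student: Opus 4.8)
The plan is to realize $\ell_m$ as the length of the chord spanning the \emph{central} block of $m$ consecutive sub-arcs and then to compare the orthogonal projection of that chord onto the main chord $\overline{P_1P_{n+1}}$ with the proportional length $\tfrac{m}{n}\ell_n$. The Corollary and the symmetry of Proposition~\ref{Proposition:AverageLengthProp} are exactly the tools needed for this comparison.

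First I would record two elementary facts. Because $(P_1,\dots,P_{n+1})$ is a regular partition, a rotation about $O$ through $i-1$ sub-arcs carries $\overline{P_1P_{m+1}}$ onto $\overline{P_iP_{i+m}}$ (this is the congruence underlying Proposition~\ref{Proposition:IsocTrap}); hence the chord over \emph{any} block of $m$ consecutive sub-arcs has the common length $\ell_m$. Second, for the feet $q_1=P_1,\dots,q_{n+1}=P_{n+1}$ of the perpendiculars onto $\overline{P_1P_{n+1}}$, the segments $\overline{q_iq_{i+1}}$ tile $\overline{P_1P_{n+1}}$, so that $\sum_{i=1}^{n}\ell(\overline{q_iq_{i+1}})=\ell_n$, and projecting any chord onto a line never increases its length.

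Next I fix the most nearly centered block of $m$ consecutive sub-arcs: let $a=\lfloor (n-m)/2\rfloor$, let the block run from $P_{a+1}$ to $P_{a+m+1}$, and set $b=n-m-a$ for the number of sub-arcs following it. The chord $\overline{P_{a+1}P_{a+m+1}}$ has length $\ell_m$, and its projection onto $\overline{P_1P_{n+1}}$ is $\ell(\overline{q_{a+1}q_{a+m+1}})=\ell_n-\ell(\overline{q_1q_{a+1}})-\ell(\overline{q_{a+m+1}q_{n+1}})$. By the reflection symmetry in Proposition~\ref{Proposition:AverageLengthProp}, the trailing sum equals a leading sum, $\ell(\overline{q_{a+m+1}q_{n+1}})=\ell(\overline{q_1q_{b+1}})$. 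Since $a$ and $b$ are both at most $\lfloor n/2\rfloor$, the Corollary bounds each leading sum strictly below its share: $\ell(\overline{q_1q_{a+1}})<\tfrac{a}{n}\ell_n$ and $\ell(\overline{q_1q_{b+1}})<\tfrac{b}{n}\ell_n$. Hence the central projection exceeds $\ell_n-\tfrac{a+b}{n}\ell_n=\tfrac{m}{n}\ell_n$, and since projection does not increase length, $\ell_m\ge \ell(\overline{q_{a+1}q_{a+m+1}})>\tfrac{m}{n}\ell_n$, which is precisely $n\ell_m>m\ell_n$.

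The main obstacle is this middle step—certifying that the central block projects to \emph{more} than its share $\tfrac{m}{n}\ell_n$—because the Corollary only upper-bounds the projections of the \emph{outer} (shorter) blocks; it is the symmetry of Proposition~\ref{Proposition:AverageLengthProp} that converts those upper bounds into the needed lower bound on the central block. The remaining friction is bookkeeping: the Corollary is stated for block sizes $s\in(1,\lfloor n/2\rfloor]$, so the cases in which a side block has size $0$ or $1$ (for instance $m=n-1$, or the small case $n=2$) must be dispatched separately, using that the extreme segment $\overline{q_1q_2}$ is the strict minimum among the $\ell(\overline{q_iq_{i+1}})$ and therefore lies strictly below their average $\tfrac{\ell_n}{n}$. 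The strictness of the final inequality is inherited from the strict inequalities of the Corollary, or, when a side block is empty, from the fact that the central chord is not parallel to $\overline{P_1P_{n+1}}$ so its projection is strictly shorter than $\ell_m$.
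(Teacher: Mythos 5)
Your proof is correct, and it takes a genuinely different route from the paper's. The paper anchors the $m$-block at $P_1$ and must therefore split into three cases: for $2m>n$ it runs essentially your projection argument (the tail block $\overline{q_{m+1}q_{n+1}}$ consists of $n-m\le\lfloor n/2\rfloor$ outer segments, so the Corollary bounds it below $\tfrac{n-m}{n}\ell_n$); for $2m=n$ it falls back on the triangle inequality; and for $2m<n$ it reduces to the first case by a doubling induction ($2\ell_{2^{k-1}m}>\ell_{2^km}$ repeated $k$ times). Your device of sliding the $m$-block to the center --- using the rotational congruence of a regular partition to see that \emph{every} block of $m$ consecutive sub-arcs subtends a chord of length $\ell_m$ --- forces both side blocks to have size at most $\lfloor n/2\rfloor$ for every $m<n$, so the Corollary (plus the symmetry $\ell(\overline{q_iq_{i+1}})=\ell(\overline{q_{n+1-i}q_{n+2-i}})$ to flip the trailing block into a leading one) applies uniformly and the entire case analysis collapses into one argument. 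What you pay for this is the extra congruence observation the paper does not need, and the same bookkeeping the paper also quietly owes: the Corollary excludes $s\in\{0,1\}$ and $n=2$, and strictness must be traced either to a side block of size at least $1$ (where $\overline{q_1q_2}$ being the strict minimum for $n\ge 3$ suffices) or, when $n-m$ is odd, to the central chord failing to be parallel to $\overline{P_1P_{n+1}}$ so that its projection is strictly shorter. You flag all of these, so the argument stands; it is, if anything, cleaner than the printed one.
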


\begin{proof}
Assume first that $2m$ is greater than $n$ and refer to Figure~\ref{m}.  Let $Q$ be the point of intersection of the circle of radius $\ell_m$ centered at $P_1$ with the line segment $\overline{P_1P_{n+1}}$.  The length of $\overline{P_1Q}$ is equal to $\ell_m$. If $s$ the length of the line segment $\overline{QP_{n+1}}$, then  \begin{flalign*}&\dsp s = \ell_n-\ell_m.&\end{flalign*} There are $n$ edges with vertices on the arc between $P_1$ and $P_{n+1}$.  There are $m+1$ vertices of the regular subdivision between $P_1$ and $P_{m+1}$ on $\A$. Since $2m$ is greater than $n$, $P_{m+1}$ lies clockwise from the intersection, $M$, of a line bisecting $\overline{P_1P_{n+1}}$ and the circle. For the same reason, the point $P_{n - m}$ lies counterclockwise from $M$. 

Proposition \ref{Proposition:AverageLengthProp} implies that the average length of the segments $\overline{q_1q_2}$, $\overline{q_2q_3}$, \dots, $\overline{q_{n-m}q_{n-m}}$ is the same as the average lengths of the segments $\overline{q_{n+1}q_n}$, $\overline{q_nq_{n-1}}, \dots, \overline{q_{2+m}q_{1+m}}$. However, for any edge $\overline{P_iP_{i+1}}$ between the vertices $P_{n+1-m}$ and $P_{m+1}$, the projection of the edge, $\overline{q_iq_j}$ on the line $\overline{P_1P_{n+1}}$, is longer than the longest of the segments $\overline{q_jq_{j+1}}$ with $j$ in $[1, n-m)\cup(m+1, n]$.  Denote by $d$ the length of the line segment $\overline{q_{m+1}q_{n+1}}$ to obtain the inequality \begin{flalign*}&\dsp d < \dfrac{n-m}{n}\ell_n.&\end{flalign*} Since $\ell_m$ is the length of the hypotenuse of a triangle with a leg of length $\ell_n - d$ and $\ell_m$ is equal to $\ell_n-s$, the length $s$ is less than $d$ and so \begin{flalign*}&\dsp s < \dfrac{n-m}{n}\ell_n.&\end{flalign*} Since, $\ell_m + s$ is equal to $\ell_n$, \begin{flalign*}&\dsp \ell_m + \dfrac{n-m}{n}\ell_n > \ell_n, \quad \text{and so} \quad n\ell_m > m\ell_m.&\end{flalign*}

\bigskip

\dwpicm

\bigskip

Suppose that $2m$ is equal to $n$.  Since the sum of the lengths of two sides of a non-degenerate triangle are greater than the length of the third, $2\ell_m$ is greater than $\ell_{2m}$, and so \begin{flalign}\label{ins:edge:comp}&\dsp  \quad n\ell_m = 2m\ell_m > m\ell_{2m} = m\ell_n.&\end{flalign}

In the case when $2m$ is less than $n$, there is a natural number $k$ with \begin{flalign*}&\dsp 2^km < n \leq 2^{k+1}m&\end{flalign*}
which together with \eqref{ins:edge:comp} implies that 
\begin{flalign*}&\dsp n\ell_{2^km} > 2^km\ell_n.&\end{flalign*} The points $P_1$ and $P_{2^{k-1}m +1}$ have $2^{k-1}m$ segments between them as do the points $P_{2^{k-1}m +1}$ and $P_{2^{k}m+1}$. Thus,  \begin{flalign*}&\dsp \ell(\overline{P_1P_{2^{k-1}m +1}}) = \ell(\overline{P_{2^{k-1}m +1}P_{2^{k}m+1}}) = \ell_{2^{k-1}m}.&\end{flalign*}  Therefore, $2\ell_{2^{k-1}m}$ is equal to the sum of the lengths of two sides of $\triangle P_1P_{2^{k-1}m+1}P_{2^km+1}$ and so is greater than $\ell_{2^km}$. Therefore,  \begin{flalign*}&\dsp 2n\ell_{2^{k-1}m} > n\ell_{2^km} > 2^km\ell_n\quad \text{and so}\quad n\ell_{2^{k-1}m} > 2^{k-1}m\ell_n > m\ell_n.&\end{flalign*} Applying the above argument $k-1$ more times yields the inequality \begin{flalign*}&\dsp n\ell_m > m\ell_n.&\end{flalign*}
\end{proof}

\subsection{Length Estimates For Circumscribed Polygonal Segments}

Use the notation of Proposition \ref{Proposition:Anglemeasure} for the statement and proof of the proposition below.
\begin{proposition}
If $k$ and $l$ are natural numbers, then
\begin{flalign*}&\dsp 1\leq k < l \leq n-1 \quad\text{implies that} \quad \ell(\overline{P_{1,k}P_{1,k+1}}) <
\ell(\overline{P_{1,l}P_{1,l+1}}).&\end{flalign*}
\end{proposition}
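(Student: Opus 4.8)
The plan is to reduce the claim to the single-step inequality $\ell(\overline{P_{1,k}P_{1,k+1}}) < \ell(\overline{P_{1,k+1}P_{1,k+2}})$ for each admissible $k$ (adopting the convention $P_{1,1}=P_1$, the point of tangency itself, for the first segment), since the full statement then follows immediately by transitivity. The governing observation is that \emph{every} point $P_{1,j}$ lies on the single tangent line $L_1$, so all of these segments are collinear; I will therefore convert the length comparison into an \emph{area} comparison, which turns out to behave much better under the available monotonicity.

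First I would pin down two facts about the rays $\overline{OP_{1,j}}$. Writing $\theta = m(\angle P_iOP_{i+1})$ for the common central angle of the regular partition, I apply Lemma~\ref{Lemma:LemmaMain} to the tangents at $P_1$ and $P_j$: it shows that $\overline{OP_{1,j}}$ bisects $\angle P_1OP_j$, so $m(\angle P_1OP_{1,j}) = \tfrac12(j-1)\theta$, and hence consecutive rays $\overline{OP_{1,k}}$ and $\overline{OP_{1,k+1}}$ always subtend the \emph{fixed} angle $\tfrac12\theta$ at $O$. Moreover $\triangle OP_1P_{1,j}$ is right-angled at $P_1$ with fixed leg $\ell(\overline{OP_1})=1$ and angle $\tfrac12(j-1)\theta$ at $O$; since $\A$ is less than half of $\C$ we have $\tfrac12(j-1)\theta < 90^\circ$ throughout, so this angle strictly increases with $j$ and therefore the hypotenuse $\ell(\overline{OP_{1,j}})$ strictly increases with $j$ as well.

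The key step is to express $\ell(\overline{P_{1,k}P_{1,k+1}})$ as twice the area of $\triangle OP_{1,k}P_{1,k+1}$. Because the base $\overline{P_{1,k}P_{1,k+1}}$ lies on $L_1$ and $O$ is at perpendicular distance $\ell(\overline{OP_1})=1$ from $L_1$, the height of this triangle from $O$ is exactly $1$, so its area equals $\tfrac12\,\ell(\overline{P_{1,k}P_{1,k+1}})$. On the other hand, the two-sides-and-included-angle formula gives the same area as $\tfrac12\,\ell(\overline{OP_{1,k}})\,\ell(\overline{OP_{1,k+1}})\sin(\tfrac12\theta)$. Equating yields $\ell(\overline{P_{1,k}P_{1,k+1}}) = \ell(\overline{OP_{1,k}})\,\ell(\overline{OP_{1,k+1}})\sin(\tfrac12\theta)$, and since each factor $\ell(\overline{OP_{1,j}})$ strictly increases in $j$ while $\sin(\tfrac12\theta)$ is a fixed positive constant, the product strictly increases with $k$, which is precisely the one-step inequality.

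The main obstacle is exactly what this area detour sidesteps. The segments in question are the third sides of the triangles $\triangle OP_{1,k}P_{1,k+1}$, all of which share the fixed angle $\tfrac12\theta$ at $O$; a naive ``longer legs $\Rightarrow$ longer opposite side'' comparison is \emph{false} for a thin triangle, since sliding one vertex outward along its ray can first \emph{decrease} the opposite side before increasing it (the opposite side is minimized at the foot of a perpendicular, not monotone in the leg length). Passing to area works because $\tfrac12 ab\sin C$ is a genuine product in the two leg lengths and is monotone in each factor separately, with no such caveat. The only remaining items requiring care are the bookkeeping convention $P_{1,1}=P_1$ for the extreme segment and the bound $\tfrac12(j-1)\theta<90^\circ$ that keeps the ray lengths increasing, both of which are guaranteed by $\A$ being less than half of $\C$.
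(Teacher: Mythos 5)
Your proof is correct, but it takes a genuinely different route from the paper's. You convert each length $\ell(\overline{P_{1,k}P_{1,k+1}})$ into twice the area of $\triangle OP_{1,k}P_{1,k+1}$ (legitimate, since $O$ is at distance $1$ from the common tangent line $L_1$), note via Lemma~\ref{Lemma:LemmaMain} that $\overline{OP_{1,j}}$ bisects $\angle P_1OP_j$ so that consecutive rays subtend the fixed angle $\tfrac12\theta$ at $O$, and then read off monotonicity from the product formula $\ell(\overline{OP_{1,k}})\,\ell(\overline{OP_{1,k+1}})\sin(\tfrac12\theta)$ with each hypotenuse $\ell(\overline{OP_{1,j}})=1/\cos\bigl(\tfrac12(j-1)\theta\bigr)$ strictly increasing because $\tfrac12(j-1)\theta<90^\circ$. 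This is in effect the identity $\tan\tfrac{k\theta}{2}-\tan\tfrac{(k-1)\theta}{2}=\sin(\tfrac{\theta}{2})/\bigl(\cos\tfrac{k\theta}{2}\cos\tfrac{(k-1)\theta}{2}\bigr)$, and it checks out, including your convention $P_{1,1}=P_1$ (which matches Figure~\ref{o}) and your correct caution that the third side of a triangle is not monotone in a leg length. The paper instead argues synthetically: it uses Proposition~\ref{Proposition:Prop3point2} to get the perpendicular-bisector structure, angle-chases with Proposition~\ref{Proposition:Anglemeasure} to show $m(\angle P_{1,i}P_{2,i-1}P_{1,i-2})=90^\circ+\tfrac{\theta}{2}^\circ>90^\circ$, and then uses a pair of similar triangles with ratio $2$ to force $\ell(\overline{P_{1,i-1}P_{1,i}})>\ell(\overline{P_{1,i-2}P_{1,i-1}})$, handling $i=2$ separately. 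What each buys: the paper's argument stays strictly within congruence, similarity, and angle sums, consistent with its program of keeping everything accessible to a contemporary of Archimedes; yours is shorter, yields an exact formula for every segment, and localizes all the work in the monotonicity of $1/\cos$. The one thing you should make explicit is that the area formula $\tfrac12 ab\sin C$ is being used only with $\sin C$ meaning a ratio in a right triangle (derivable by dropping a perpendicular and invoking similarity), not a function of arc length --- otherwise a reader of this particular paper might suspect circularity where there is none.
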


\bigskip

\begin{minipage}{.49\linewidth}
\dwpicn
\end{minipage}
\begin{minipage}{.49\linewidth}
\dwpico
\end{minipage}

\bigskip

\begin{proof}
Suppose that $i$ is a natural number in $[3,n]$. Proposition \ref{Proposition:Prop3point2} implies that the line $\overline{P_{1,i-1}P_{2,i-2}}$ is a perpendicular bisector of $\overline{P_{1,i-2}P_{2,i-1}}$. Denote by $A$ this intersection. To prove the proposition, it suffices to show that $\angle P_{1,i}P_{2,i-1}P_{1,i-2}$ is greater than a right angle. In this case, there is a line that intersects $P_{2,i-1}$ and is perpendicular to $\overline{P_{1,i-2}P_{2,i-1}}$ that intersects $\overline{P_{1,i-2}P_{1,i}}$ at a point $B$.  The triangles $\triangle P_{1,i-2}AP_{1,i-1}$ and $\triangle P_{1,i-2}P_{2,i-1}B$ are similar, but since $\overline{P_{1,i-2}P_{2,i-1}}$ is twice the length of $\overline{P_{1,i-2}A}$, $\overline{P_{1,i-2}B}$ is twice the length of $\overline{P_{1,i-2}P_{1,i-1}}$, and so \begin{flalign*}&\dsp \ell(\overline{P_{1,i-1}P_{1,i}}) > \ell(\overline{P_{1,i-1}B})= \ell(\overline{P_{1,i-2}P_{1,i-1}}).&\end{flalign*} 

Proposition \ref{Proposition:Prop3point2} implies the congruency of the angles $\angle P_{1,i}P_{2,i-1}P_{1,i-1}$ and $\angle P_{i,2}P_{2,i-1}P_{1,i}$.  Proposition \ref{Proposition:Anglemeasure} implies that  \begin{flalign*}&\dsp m(\angle P_{1,i}P_{2,i-1}P_{1,i-1}) = 90^\circ-\frac{i-2}{2}\theta^\circ.&\end{flalign*} Proposition \ref{Proposition:Prop3point2} implies that $\overline{P_{2,i-2}P_{1,i-1}}$ is perpendicular to $\overline{P_{1,i-2}P_{2,i-1}}$ and furthermore that $\overline{P_{2,i-2}P_{1,i-1}}$ bisects $\angle P_{2,i-1}P_{2,i-2}P_{1,i-2}$.  Therefore, $\angle P_{1,i-2}P_{2,i-1}P_{2,i-2}$ and $\angle P_{2,i-2}P_{1,i-2}P_{2,i-1}$ are congruent.  Proposition \ref{Proposition:Anglemeasure} implies that \begin{flalign*}&\dsp 2m(\angle P_{1,i-2}P_{2,i-1}P_{2,i-2}) + 180^\circ -(i-3)\theta^\circ = 180^\circ.&\end{flalign*}  Therefore, $m(\angle P_{1,i-2}P_{2, i-1}P_{2,i-2})$ is equal to $\frac{i-3}{2}\theta^\circ$ and so
\begin{flalign*}\dsp m(\angle P_{1,i}P_{2,i-1}P_{1,i-2}) &= m(\angle P_{1,i}P_{2,i-1}P_{1,i-1})+ m(\angle P_{1,i-1}P_{2,i-1}P_{1,i-2})\\&= 90^\circ-\tfrac{i-2}{2}\theta^\circ + \left((i-2)\theta^\circ - \tfrac{i-3}{2}\theta^\circ\right)\\ &= 90^\circ + \tfrac{\theta}{2}^\circ > 90^\circ.&\end{flalign*}

Consider the case where $i$ is $2$ (Figure \ref{o}).  As in the argument above, the key is to show that \begin{flalign*}&\dsp m(\angle P_{1,3}P_2P_1)>90^\circ.&\end{flalign*}

Proposition \ref{Proposition:Prop3point2} implies that $\overline{P_{1,2}P_{3,2}}$ is tangent to $\mathscr A$, hence perpendicular  to $\overline{P_{1,3}P_2}$.  It suffices to show that $m(\angle P_{1,2}P_2P_1)$ does not equal 0. Since \begin{flalign*}&\dsp m(\angle P_2OP_1) = \theta^\circ,&\end{flalign*} $\angle P_1P_{1,2}P_2$ has measure $(180 - \theta)^\circ$.  Since the angles $\angle P_{1,2}P_1P_2$ and $\angle P_{1,2}P_2P_1$ are congruent, both have measure $\frac{\theta}{2}^\circ$, which is greater than $0$.
\end{proof}

\begin{theorem}\label{Theorem:TheoremThree}
Suppose that $n$ is a natural number greater than two and $(P_1, \dots, P_{n+1})$ is a regular clockwise orientated partition of an arc $\mathscr A$. If $L_n$ and $L_m$  respectively denote the length of the line segment $\overline{P_1P_{1,n+1}}$ and the length of the line segment $\overline{P_1P_{m+1}}$, then \begin{flalign*}&\dsp m <n \quad \text{implies that}\quad nL_m < mL_n.&\end{flalign*} 
\end{theorem}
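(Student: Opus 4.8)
The plan is to exploit the fact that, by the preceding proposition, the tangent-segment increments measured along the tangent line to $\mathscr{A}$ at $P_1$ form a strictly increasing sequence, after which the theorem reduces to the elementary statement that the running averages of a strictly increasing sequence are themselves strictly increasing.

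First I would fix the geometric picture. All of the points $P_{1,2}, P_{1,3}, \dots, P_{1,n+1}$ lie on the single line tangent to $\mathscr{A}$ at $P_1$, and Lemma~\ref{Lemma:LemmaMain} together with the clockwise ordering of the partition places them in increasing order of distance from $P_1$ as the second index grows. Writing $P_{1,1}$ for $P_1$ itself and setting $d_k = \ell(\overline{P_{1,k}P_{1,k+1}})$ for $1 \le k \le n$, collinearity gives the telescoping identities $L_m = \ell(\overline{P_1 P_{1,m+1}}) = \sum_{k=1}^m d_k$ and $L_n = \ell(\overline{P_1 P_{1,n+1}}) = \sum_{k=1}^n d_k$. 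Applying the preceding proposition to the $(n+1)$-point partition then yields the strict monotonicity $d_1 < d_2 < \cdots < d_n$.

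With this in hand the desired inequality $nL_m < mL_n$ is equivalent to $\tfrac{1}{m}\sum_{k=1}^m d_k < \tfrac{1}{n}\sum_{k=1}^n d_k$, which I would prove directly by computing
\[ mL_n - nL_m = m\sum_{k=m+1}^n d_k - (n-m)\sum_{k=1}^m d_k. \]
Since each $d_k$ with $k \le m$ satisfies $d_k \le d_m$, one has $\sum_{k=1}^m d_k \le m\,d_m$; since each $d_k$ with $k \ge m+1$ satisfies $d_k > d_m$, one has $\sum_{k=m+1}^n d_k > (n-m)d_m$. Combining these, $m\sum_{k=m+1}^n d_k > m(n-m)d_m \ge (n-m)\sum_{k=1}^m d_k$, so $mL_n - nL_m > 0$, which is exactly the claim. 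The hypothesis $m < n$ guarantees that the block $\{m+1,\dots,n\}$ is nonempty, so the strict inequality coming from $d_{m+1} > d_m$ survives.

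The genuinely hard geometric work — establishing that the increments $d_k$ increase — has already been carried out in the preceding proposition via the angle-measure computations of Proposition~\ref{Proposition:Anglemeasure} and the symmetry supplied by Proposition~\ref{Proposition:Prop3point2}. Consequently the only obstacle remaining here is bookkeeping: I must apply the preceding proposition (stated for an $n$-point partition) with $n$ replaced by $n+1$, so that the full chain $d_1 < \cdots < d_n$ is available, and I must confirm that the segments $\overline{P_{1,k}P_{1,k+1}}$ genuinely abut along one line so that the telescoping sums are legitimate. It is worth noting the contrast with the inscribed Theorem~\ref{Theorem:TheoremTwo}: there the projected increments are symmetric about the midpoint rather than monotone, which forced the more delicate corollary-plus-doubling argument and produced the opposite inequality; for the circumscribed segments the monotonicity persists throughout, so the clean averaging argument suffices.
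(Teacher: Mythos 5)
Your proposal is correct and follows essentially the same route as the paper: the paper's proof also observes that the segments $\overline{P_{1,i}P_{1,i+1}}$ have increasing length by the preceding proposition, and concludes that the average segment length of $L_n$ exceeds that of $L_m$, i.e.\ $\tfrac{L_n}{n} > \tfrac{L_m}{m}$. You simply make explicit the telescoping decomposition along the tangent line at $P_1$ and the elementary fact that running averages of a strictly increasing sequence increase, which the paper leaves implicit.
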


\begin{proof}
The lengths of the segments $\overline{P_{1,i}P_{1,i+1}}$ are increasing in $i$, so the average length of a segment of $L_n$ is larger than the average length of a segment of $L_m$.  Therefore, \begin{flalign*}&\dsp \tfrac{L_n}{n} > \tfrac{L_m}{m},&\end{flalign*}  implying the desired result.  
\end{proof}

\section{Circumference and Area are Intrinsic}

\begin{definition}
A \emph{circuit} is a finite sequence of points $\left(P_1, \dots, P_{n+1}\right)$ on $\C$ that is counterclockwise ordered and with the property that $P_1$ is equal to $P_{n+1}$. 
\end{definition}

Without loss in generality and in order to simplify the exposition, assume that the arc connecting adjacent points of a circuit is less than half of a circle. 

\begin{definition}
A \emph{refinement of a circuit $\mathcal S_1$} is a circuit $\mathcal S_2$ such that, as functions on finite sets, the range of $\mathcal S_1$ is a subset of the range of $\mathcal S_2$.  
\end{definition}

Denote by $S(\C)$ the set of all circuits on $\C$. Suppose that $\left(P_1,\dots, P_{n+1}\right)$ is in $S(\C)$.  The inscribed polygon $g\!\left(P_1,\dots,P_{n+1}\right)$ is the polygon whose vertices are the points in the circuit and the circumscribed polygon $G\!\left(P_1,\dots,P_{n+1}\right)$ is the polygon whose vertices are the points given by the intersections of the lines tangent to $\C$ at adjacent points of the circuit.  

\begin{definition}
For any polygon $P$, denote by $\pi(P)$ the perimeter of $P$. Let $\min\!\left(P_1,\dots,P_{n+1}\right)$ denote the minimum edge length of $g\!\left(P_1,\dots,P_{n+1}\right)$ and denote by $\max\!\left(P_1,\dots,P_{n+1}\right)$ the maximal edge length of $g\!\left(P_1,\dots,P_{n+1}\right)$.
\end{definition}

\medskip

\dwpicp

\medskip

\subsection{Approximation By Rational Circuits}

Let the length $\ell$ be less than 2.  Construct recursively a sequence of points and a corresponding piecewise linear path in the following way. Take $P_1$ to be a point on $\C$.  Let $P_n$ be the $n^{\rm th}$ term in a sequence of points on $\C$ so that if $P_i$ and $P_{i+1}$ are adjacent points in the sequence, then $\ell(\overline{P_iP_{i+1}})$ is equal to $\ell$ and the triangle $\triangle OP_iP_{i+1}$ is counterclockwise oriented. The circle $\C_{\ell,n}$ of radius $\ell$ with center $P_n$ intersects $\C$ in exactly two places.  Let $P_{n+1}$ be the first of these points counterclockwise from $P_n$.  The line segment $\overline{P_nP_{n+1}}$ is of length $\ell$.  If the sequence has the property that for some $k$, $P_k$ is equal to $P_1$, then $\ell$ is a \emph{rational length} and there is a first such $k$ that is the \emph{numerator of $\ell$}, denoted by ${\mathcal N}(\ell)$. The closed piecewise linear path $\Gamma(\ell)$ is the finite sequence of edges \begin{flalign*}&\dsp \Gamma(\ell) = \left(\overline{P_1P_2},\dots, \overline{P_{{\mathcal N}(\ell)-1}P_{{\mathcal N}(\ell)}}\right).&\end{flalign*}  Let $\overline{OP_1}$ be the line from the origin to the point $P_1$. Denote by ${\mathcal D}(\ell)$, the \emph{denominator of $\ell$},  the number of times the lines $\overline{P_iP_{i+1}}$ intersect the line segment $\overline{OP_1}$, where $i$ in $(1, {\mathcal{N}}(\ell)]$. The denominator counts how many times $\Gamma(\ell)$ wraps around the origin.  A length $\ell$ is \emph{integral} if ${\mathcal D}(\ell)$ is equal to 1, implying that $\ell$ is the edge length of a regular inscribed polygon.  A circuit is said to be \emph{integral} if all of its edge lengths are integral.  A \emph{regular circuit} is an integral circuit in which the distance between any two adjacent points in the circuit is the same.  The counterclockwise ordered set of intersections of any circumscribed regular polygon with $\C$ is a regular circuit.  Furthermore, any regular circuit is the set of intersections of a regular circumscribed polygon with $\C$.

\begin{proposition}\label{Proposition:Propositionfourone}
Let $\ell$ and $m$ be rational lengths. Let $L$ and $M$ denote the edge lengths of circumscribed edges corresponding to the inscribed edges $\ell$ and $m$ respectively. If $m$ is less than $\ell$, then \begin{flalign*}&\dsp\dfrac{{\mathcal N}(\ell)}{{\mathcal D}(\ell)} \ell < \dfrac{{\mathcal N}(m)}{{\mathcal D}(m)} m\quad {\rm and} \quad\dfrac{{\mathcal N}(\ell)}{{\mathcal D}(\ell)} L > \dfrac{{\mathcal N}(m)}{{\mathcal D}(m)} M.&\end{flalign*}
\end{proposition}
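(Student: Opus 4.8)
The plan is to reduce both inequalities to the edge–comparison Theorems~\ref{Theorem:TheoremTwo} and~\ref{Theorem:TheoremThree} by passing to a common subdivision of a single arc. The bridge between the arithmetic data $({\mathcal N},{\mathcal D})$ and those theorems is the observation, immediate from the construction defining rational lengths, that a chord of length $\ell$ subtends an arc equal to the fraction ${\mathcal D}(\ell)/{\mathcal N}(\ell)$ of $\C$: stepping by such a chord ${\mathcal N}(\ell)$ times wraps around $O$ exactly ${\mathcal D}(\ell)$ times, so that ${\mathcal N}(\ell)/{\mathcal D}(\ell)$ is the reciprocal of the fraction of $\C$ subtended by $\ell$, and likewise for $m$. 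Since $\ell$ and $m$ are both less than $2$, each subtends an arc less than half of $\C$; and because among arcs of at most a semicircle a longer chord subtends a strictly longer arc, the hypothesis $m<\ell$ forces the fraction subtended by $m$ to be strictly smaller than that subtended by $\ell$.

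Next I would introduce the common subdivision. Writing the two fractions over a common denominator, there are positive integers $a>b$ and $c$ with ${\mathcal D}(\ell)/{\mathcal N}(\ell)=a/c$ and ${\mathcal D}(m)/{\mathcal N}(m)=b/c$; by refining the common denominator (replacing $c,a,b$ by $tc,ta,tb$) I may also assume $a\ge 3$. Let $\A$ be an arc of $\C$ equal to the fraction $a/c$ of $\C$. Since $a/c<\tfrac12$, the arc $\A$ is less than half of $\C$, so Theorems~\ref{Theorem:TheoremTwo} and~\ref{Theorem:TheoremThree} apply to it. Let $(P_1,\dots,P_{a+1})$ be a regular partition of $\A$ into $a$ equal sub-arcs. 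Then $\overline{P_1P_{a+1}}$ subtends the fraction $a/c$ and $\overline{P_1P_{b+1}}$ subtends the fraction $b/c$ of $\C$, so by the monotone chord–arc correspondence these chords have lengths $\ell$ and $m$ respectively.

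For the first inequality I would apply Theorem~\ref{Theorem:TheoremTwo} with $a$ and $b$ playing the roles of its $n$ and $m$, giving $a\,m>b\,\ell$. Since ${\mathcal N}(\ell)/{\mathcal D}(\ell)=c/a$ and ${\mathcal N}(m)/{\mathcal D}(m)=c/b$, dividing $a\,m>b\,\ell$ by $ab$ and multiplying by $c$ yields exactly $\tfrac{{\mathcal N}(\ell)}{{\mathcal D}(\ell)}\ell<\tfrac{{\mathcal N}(m)}{{\mathcal D}(m)}m$. For the second inequality I would apply Theorem~\ref{Theorem:TheoremThree} to the same partition, which gives $a\,\ell(\overline{P_1P_{1,b+1}})<b\,\ell(\overline{P_1P_{1,a+1}})$. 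Because $P_1$ and $P_{a+1}$ are separated by exactly the central angle subtended by $\ell$, they are adjacent points of tangency of the circumscribed polygon associated with $\ell$, and $P_{1,a+1}$ is the vertex of that polygon lying between them; hence $\overline{P_1P_{1,a+1}}$ is half of the circumscribed edge $L$, and symmetrically $\overline{P_1P_{1,b+1}}$ is half of $M$. Substituting $\ell(\overline{P_1P_{1,a+1}})=\tfrac12 L$ and $\ell(\overline{P_1P_{1,b+1}})=\tfrac12 M$ and again using ${\mathcal N}(\ell)/{\mathcal D}(\ell)=c/a$, ${\mathcal N}(m)/{\mathcal D}(m)=c/b$ turns this into $\tfrac{{\mathcal N}(\ell)}{{\mathcal D}(\ell)}L>\tfrac{{\mathcal N}(m)}{{\mathcal D}(m)}M$.

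The routine parts are the translation from $({\mathcal N},{\mathcal D})$ to a fraction of $\C$ and the final algebra, where the common factor $\tfrac12$ on $L$ and $M$ cancels. The main obstacle, and the step I would state most carefully, is the correct identification of the segments occurring in the two theorems with $\ell,m,L,M$: in particular that $\overline{P_1P_{1,a+1}}$ and $\overline{P_1P_{1,b+1}}$ are genuinely half-edges of the circumscribed polygons associated with $\ell$ and $m$. I would also make explicit the elementary fact that the chord subtending a prescribed fraction of $\C$ below a semicircle is well defined and strictly increasing in that fraction, since this is precisely what converts the hypothesis $m<\ell$ into the combinatorial inequality $b<a$ on which the whole reduction rests.
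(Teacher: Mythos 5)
Your proof is correct and follows essentially the same route as the paper: both arguments pass to a common regular subdivision of the circle (your common denominator $c$ is in effect the paper's ${\mathcal N}(\ell){\mathcal N}(m)$, with $a = {\mathcal D}(\ell){\mathcal N}(m)$ and $b = {\mathcal D}(m){\mathcal N}(\ell)$) and then invoke Theorem~\ref{Theorem:TheoremTwo} for the inscribed inequality and Theorem~\ref{Theorem:TheoremThree} for the circumscribed one. The only difference is cosmetic: you spell out the identification of $\overline{P_1P_{1,a+1}}$ and $\overline{P_1P_{1,b+1}}$ as half-edges of the circumscribed polygons, a detail the paper leaves implicit in its one-line treatment of the circumscribed case.
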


\begin{remark}
If $\ell$ and $m$ are integral lengths, then ${\mathcal D}(\ell)$ and ${\mathcal D}(m)$ are both equal to 1 and the proposition gives a comparison of the perimeters of the regular polygons of side lengths $\ell$ and $m$.  In the rational but non-integral case, it is not reasonable to compare the lengths of the curves $\Gamma(\ell)$ and $\Gamma(m)$ because of the dissimilar wrapping of the curves $\Gamma(\ell)$ and $\Gamma(m)$ around the circle.  Dividing the lengths of the above curves by their respective denominators gives a way to compare the average length of the curves on a single wrapping around the circle.  It is this idea that will prove important in the next proposition.
\end{remark}

\begin{proof}
Since length is invariant under rotation, assume that both paths $\Gamma(\ell)$ and $\Gamma(m)$ start with the same initial point.  All the points on the paths $\Gamma(\ell)$ and $\Gamma(m)$ are points on the regular polygon with ${\mathcal N}(\ell){\mathcal N}(m)$ sides containing the point $P_1$. Since $\ell$ may be realized as the edge length of a straight line segment traversing ${\mathcal{D}}(\ell)$ segments of a regular ${\mathcal N}(\ell)$-gon and $m$ may be realized as the edge length of a straight line segment traversing ${\mathcal D}(m)$ segments of a regular ${\mathcal N}(m)$-gon, $\ell$ may also be realized as the edge length of a straight line segment traversing ${\mathcal{D}}(\ell){\mathcal N}(m)$ segments of a regular ${\mathcal N}(\ell){\mathcal N}(m)$-gon and $m$ may also be realized as the edge length of a straight line segment traversing ${\mathcal D}(m){\mathcal N}(\ell)$ segments of a regular ${\mathcal N}(m)$-gon. Since $\ell$ is larger than $m$, \begin{flalign*}&\dsp {\mathcal{D}}(\ell){\mathcal N}(m) > {\mathcal D}(m){\mathcal N}(\ell).&\end{flalign*} By assumption, both lengths are less than a diameter.  Theorem \ref{Theorem:TheoremTwo} therefore implies that \begin{flalign*}&\dsp {\mathcal D}(\ell){\mathcal N}(m) m < {\mathcal D}(m){\mathcal N}(\ell) \ell,&\end{flalign*} and this proves the proposition.

The case of circumscribed polygons is similar except that instead of comparing the lengths $\ell$ and $m$, compare the lengths $L$ and $M$ and appeal to Theorem \ref{Theorem:TheoremThree} to obtain the reverse inequality.
\end{proof}

\begin{proposition}\label{Proposition:PropositionFourTwo}
If $(P_1, \dots, P_{k+1})$ and $(Q_1,\dots, Q_{n+1})$ are two rational circuits, then \begin{flalign*}&\dsp \min(P_1, \dots, P_{k+1}) > \max(Q_1,\dots, Q_{l+1})&\end{flalign*} implies that \begin{flalign*}&\dsp (1)\quad \pi (g(Q_1,\dots, Q_{l+1})) > \pi (g(P_1, \dots, P_{k+1}))& \end{flalign*} and \begin{flalign*}&\dsp (2)\quad  \pi (G(Q_1,\dots, Q_{l+1})) < \pi (G(P_1, \dots, P_{k+1})).&\end{flalign*}
\end{proposition}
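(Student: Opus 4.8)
The plan is to treat each circuit's perimeter as a weighted average of the ``per-wrap'' lengths that appear in Proposition~\ref{Proposition:Propositionfourone}, and then to reduce the comparison of two whole circuits to the single comparison of their extreme edges.

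First I would record the arc bookkeeping. Each edge $\overline{P_iP_{i+1}}$ of $g(P_1,\dots,P_{k+1})$ is a chord of a rational length $\ell_i$; since a chord of given length always subtends the same central angle, the arc it cuts off is the fraction $f_i = {\mathcal D}(\ell_i)/{\mathcal N}(\ell_i)$ of $\C$ (this is exactly the fraction cut off by one edge of the regular ${\mathcal N}(\ell_i)$-gon that realizes $\ell_i$, which wraps ${\mathcal D}(\ell_i)$ times). Because the circuit is counterclockwise ordered and closes up after a single revolution, $\sum_i f_i = 1$. Writing $\ell_i = f_i\,\bigl({\mathcal N}(\ell_i)/{\mathcal D}(\ell_i)\bigr)\ell_i$, the inscribed perimeter becomes the convex combination $\pi(g(P_1,\dots,P_{k+1})) = \sum_i f_i\bigl({\mathcal N}(\ell_i)/{\mathcal D}(\ell_i)\bigr)\ell_i$, i.e. a weighted average of the per-wrap lengths $({\mathcal N}(\ell_i)/{\mathcal D}(\ell_i))\ell_i$ with weights $f_i$ summing to $1$.

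Next I would extract monotonicity from Proposition~\ref{Proposition:Propositionfourone}: for rational lengths the map $\ell \mapsto ({\mathcal N}(\ell)/{\mathcal D}(\ell))\ell$ is strictly decreasing. Let $\ell_\ast = \min(P_1,\dots,P_{k+1})$ and $m^\ast = \max(Q_1,\dots,Q_{l+1})$, and set $C = ({\mathcal N}(\ell_\ast)/{\mathcal D}(\ell_\ast))\ell_\ast$ and $C' = ({\mathcal N}(m^\ast)/{\mathcal D}(m^\ast))m^\ast$. Since every $P$-edge has $\ell_i \ge \ell_\ast$, monotonicity gives $({\mathcal N}(\ell_i)/{\mathcal D}(\ell_i))\ell_i \le C$, so the weighted average obeys $\pi(g(P_1,\dots,P_{k+1})) \le C$; dually, every $Q$-edge has $m_j \le m^\ast$, whence $\pi(g(Q_1,\dots,Q_{l+1})) \ge C'$. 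The hypothesis $\ell_\ast > m^\ast$ together with Proposition~\ref{Proposition:Propositionfourone} gives $C < C'$ strictly, and chaining $\pi(g(P_1,\dots,P_{k+1})) \le C < C' \le \pi(g(Q_1,\dots,Q_{l+1}))$ yields (1).

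For (2) I would run the identical argument on the circumscribed side. The one point that needs checking is that the circumscribed perimeter also splits along arcs: for each arc $\overline{P_iP_{i+1}}$ the two tangent segments meeting at the vertex $V_i$ where the tangents at $P_i$ and $P_{i+1}$ meet have equal length by the usual tangent-segment symmetry, and their sum depends only on the central angle of that arc, so it equals the circumscribed edge length $L_i$ that Proposition~\ref{Proposition:Propositionfourone} attaches to $\ell_i$; summing over $i$ gives $\pi(G(P_1,\dots,P_{k+1})) = \sum_i L_i = \sum_i f_i ({\mathcal N}(\ell_i)/{\mathcal D}(\ell_i))L_i$. Now the circumscribed per-wrap length $\ell \mapsto ({\mathcal N}(\ell)/{\mathcal D}(\ell))L$ is strictly \emph{increasing} by the second inequality of Proposition~\ref{Proposition:Propositionfourone}, so the same averaging bounds give $\pi(G(P_1,\dots,P_{k+1})) \ge D$ and $\pi(G(Q_1,\dots,Q_{l+1})) \le D'$ with $D > D'$, proving (2). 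The main obstacle is the bookkeeping of the first paragraph: correctly identifying the arc-fraction $f_i$ with ${\mathcal D}(\ell_i)/{\mathcal N}(\ell_i)$, verifying $\sum_i f_i = 1$, and recognizing that $\sum_i f_i = 1$ is exactly what turns the perimeter into a convex combination, so that the two-circuit comparison collapses to the single extremal comparison already handled by Proposition~\ref{Proposition:Propositionfourone}.
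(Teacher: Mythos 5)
Your proof is correct and follows essentially the same route as the paper: both express each perimeter as a convex combination of the per-wrap lengths $\bigl({\mathcal N}(\ell)/{\mathcal D}(\ell)\bigr)\ell$ with arc-fraction weights summing to $1$, and then invoke the monotonicity of Proposition~\ref{Proposition:Propositionfourone} to collapse the comparison to the extremal edges. The only difference is presentational: the paper clears denominators by taking $\Lambda = \prod {\mathcal N}(\cdot)$ copies of each polygon and counting integer wrappings of $\C$, which is exactly your identity $\sum_i f_i = 1$ in disguise.
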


\begin{proof}
Denote by $\ell_i$ and $m_i$ the lengths \[\ell_i = \ell(\overline{P_iP_{i+1}})\quad {\rm and} \quad m_j =
\ell(\overline{Q_jQ_{j+1}}).\]  Abuse notation and denote again by $\ell_i$ and $m_i$ the respective segments of length $\ell_i$ and $m_i$.  There are $k$ segments $\ell_1,\dots,
\ell_k$ for $g\!\left(P_1,\dots, P_{k+1}\right)$ and $n$ segments
$m_1,\dots,m_n$ for $g\!\left(Q_1,\dots,Q_{n+1}\right)$.  Let $\ell_1$ be the shortest of the segments of $g\!\left(P_1,\dots, P_{k+1}\right)$ and $m_n$ be the longest of the segments of $g\!\left(Q_1,\dots,Q_{n+1}\right)$.  Take
$\Lambda$ to be the product \[\Lambda = {\mathcal N}(\ell_1)\cdots{\mathcal N}(\ell_k){\mathcal
N}(m_1)\cdots{\mathcal N}(m_n)\] and take $\Lambda$ copies of both polygons.  Compute
the respective perimeters of these polygons to obtain

\begin{flalign*}
\dsp\Lambda \pi(g(P_1, \dots, P_{k+1})) &=
\Lambda(\ell_1 + \ell_2 + \dots + \ell_k)\\ &=
\frac{\Lambda}{{\mathcal N}(\ell_1)} {\mathcal N}(\ell_1) \ell_1  + \dots +
\frac{\Lambda}{{\mathcal N}(\ell_k)} {\mathcal N}(\ell_k) \ell_k \\ & =
{\mathcal D}(\ell_1)\frac{\Lambda}{{\mathcal N}(\ell_1)} \frac{{\mathcal
N}(\ell_1)}{{\mathcal D}(\ell_1)} \ell_1 + \cdots + {\mathcal
D}(\ell_k)\frac{\Lambda}{{\mathcal N}(\ell_k)} \frac{{\mathcal
N}(\ell_k)}{{\mathcal D}(\ell_k)} \ell_k \\%
& <
{\mathcal D}(\ell_1)\frac{\Lambda}{{\mathcal N}(\ell_1)} \frac{{\mathcal
N}(\ell_1)}{{\mathcal D}(\ell_1)} \ell_1 + \cdots + {\mathcal
D}(\ell_k)\frac{\Lambda}{{\mathcal N}(\ell_k)} \frac{{\mathcal
N}(\ell_1)}{{\mathcal D}(\ell_1)} \ell_1\\
 &=\left({\mathcal D}(\ell_1)\frac{\Lambda}{{\mathcal N}(\ell_1)} + \cdots +
{\mathcal D}(\ell_k)\frac{\Lambda}{{\mathcal N}(\ell_k)} \right)
\frac{{\mathcal N}(\ell_1)}{{\mathcal D}(\ell_1)}\ell_1=
\Lambda \frac{{\mathcal N}(\ell_1)}{{\mathcal D}(\ell_1)} \ell_1.&
\end{flalign*}

The inequality follows from Theorem \ref{Theorem:TheoremTwo} since $\ell_1$ is the minimum length of the segments of $g(P_1, \dots, P_{k+1})$.  The ultimate equality follows from the fact that \begin{flalign*}
&\dsp \sum_{i=1}^k {\mathcal D}(\ell_i)\frac{\Lambda}{{\mathcal N}(\ell_i)} = \Lambda.&\end{flalign*}  To justify the last assertion, note that given $\Lambda$ copies of the polygon $g(P_1, \dots, P_{k+1})$, the segments in all of these copies form a path that wraps around $\C$ exactly $\Lambda$ times.  However, this collection of polynomial contains $\Lambda$ copies of each segment $\ell_i$, hence $\frac{\Lambda}{{\mathcal N}(\ell_i)}$ copies of each path $\Gamma(\ell_i)$ which wraps around $\C$ exactly ${\mathcal D}(\ell_i)$ times. Summing each of these $\frac{\Lambda}{{\mathcal N}(\ell_i)}{\mathcal D}(\ell_i)$ wrappings gives the total number of wrappings, $\Lambda$, implying the above formula for the sum.

Similarly, use the reverse inequality and make use of Proposition \ref{Proposition:Propositionfourone} to obtain the inequality
\begin{flalign*}
&\dsp \Lambda\pi(g(Q_1, \dots, Q_{n+1})) > \Lambda \frac{{\mathcal N}(m_n)}{{\mathcal D}(m_n)} m_n > \Lambda \frac{{\mathcal N}(\ell_1)}{{\mathcal D}(\ell_1)} \ell_1 > \Lambda\pi(g(P_1, \dots, P_{k+1})).&\end{flalign*} Dividing both sides of the inequality by $\Lambda$ finishes the proof for inscribed polygons.

The proof of the result for circumscribed polygons is done in the same way but appeals to the appropriate inequality in Proposition~\ref{Proposition:Propositionfourone}, reversing the inequalities analogous to those given above.
\end{proof}

\begin{definition}
Following Archimedes, define $2\pi$ to be equal to $p_3$ (as defined in Proposition~\ref{pPaAn}). 
\end{definition}

Denote by ${\mathds Q}({\C})$ the set of all rational circuits on $\C$ and suppose that $\P$ is in ${\mathds Q}({\C})$. Denote  the maximal edge length of $\P$, the \emph{mesh} of $\P$, by $\mu(\P)$.    

\begin{theorem}\label{Theorem:TheoremFour}
For any positive real number $\varepsilon$ there is a positive real number $\delta$ such that \begin{flalign*}&\dsp \mu(\P)  < \delta\quad \text{implies that}\quad 0 < 2\pi -
\pi(g(\P)) < \varepsilon \quad  \text{and}\quad
0 < \pi(G(\P))-2\pi < \varepsilon.&\end{flalign*}
\end{theorem}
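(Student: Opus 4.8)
The plan is to use the regular $2^m\cdot 3$-gons, whose vertex sets are rational (indeed regular) circuits, as a fixed scale of comparison objects against which an arbitrary rational circuit $\P$ can be measured, and to feed these comparisons into Proposition~\ref{Proposition:PropositionFourTwo}. The two facts I would record at the outset are that $p_3(m)$ increases strictly to $2\pi$ and $P_3(m)$ decreases strictly to $2\pi$, so that $p_3(m) < 2\pi < P_3(m)$ for every $m$; these follow from the strict monotonicity in Proposition~\ref{Proposition:littlepanda}, the convergence in Proposition~\ref{pPaAn}, the equality $P_3 = p_3$ in Proposition~\ref{Proposition:PropositionTwoSix}, and the definition $2\pi = p_3$. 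I would also note that $\ell_3(m) = p_3(m)/(2^m\cdot 3) \to 0$, so the common edge length of the regular $2^m\cdot 3$-gon can be made smaller than any prescribed positive number, and that for this regular circuit one has $\min = \max = \ell_3(m)$ with $\pi(g(\cdot)) = p_3(m)$ and $\pi(G(\cdot)) = P_3(m)$ directly from the definitions.

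For the universal bounds $0 < 2\pi - \pi(g(\P))$ and $0 < \pi(G(\P)) - 2\pi$, I would fix a rational circuit $\P$ and choose $m$ so large that $\ell_3(m) < \min(\P)$, which is possible because $\min(\P) > 0$ and $\ell_3(m)\to 0$. The regular $2^m\cdot3$-gon then satisfies $\max = \ell_3(m) < \min(\P)$, so it plays the role of the finer circuit in Proposition~\ref{Proposition:PropositionFourTwo}, yielding $\pi(g(\P)) < p_3(m)$ and $\pi(G(\P)) > P_3(m)$. Since $p_3(m) < 2\pi < P_3(m)$, the strict inequalities $\pi(g(\P)) < 2\pi < \pi(G(\P))$ follow; these hold for every rational circuit and supply the left halves of both chains.

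For the convergence, given $\varepsilon > 0$ I would pick a single index $m_\varepsilon$ large enough that both $2\pi - p_3(m_\varepsilon) < \varepsilon$ and $P_3(m_\varepsilon) - 2\pi < \varepsilon$, possible by the two monotone limits, and set $\delta = \ell_3(m_\varepsilon)$. If $\mu(\P) < \delta$, then the regular $2^{m_\varepsilon}\cdot 3$-gon has minimal edge length $\ell_3(m_\varepsilon) = \delta > \mu(\P) = \max(\P)$, so now the regular polygon is the coarser circuit in Proposition~\ref{Proposition:PropositionFourTwo}. This gives $\pi(g(\P)) > p_3(m_\varepsilon)$ and $\pi(G(\P)) < P_3(m_\varepsilon)$, whence $2\pi - \pi(g(\P)) < 2\pi - p_3(m_\varepsilon) < \varepsilon$ and $\pi(G(\P)) - 2\pi < P_3(m_\varepsilon) - 2\pi < \varepsilon$. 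Combining with the universal bounds of the previous paragraph completes both chains of inequalities.

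I do not expect a deep obstacle, since Proposition~\ref{Proposition:PropositionFourTwo} already encodes the monotonicity of perimeter under refinement. The only real care is bookkeeping: keeping straight which of $\P$ and the comparison $2^m\cdot3$-gon is the finer circuit in each application of Proposition~\ref{Proposition:PropositionFourTwo} (it is the regular polygon for the universal bounds and $\P$ for the $\varepsilon$-estimate), and confirming that the regular $2^m\cdot 3$-gons are legitimate rational circuits with $\min = \max = \ell_3(m)$. The potentially delicate point to verify explicitly is the harmless identification of the quantities $p_3(m)$, $P_3(m)$ with the perimeters $\pi(g(\cdot))$, $\pi(G(\cdot))$ evaluated on the regular circuit, which is immediate once one observes that the inscribed and circumscribed polygons attached to that circuit are exactly the regular $2^m\cdot3$-gons.
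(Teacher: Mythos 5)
Your proposal is correct and follows essentially the same route as the paper: both halves of each chain are obtained by comparing $\P$ against regular $2^m\cdot 3$-gons via Proposition~\ref{Proposition:PropositionFourTwo}, once with the regular polygon as the finer circuit (for the universal bounds $\pi(g(\P)) < 2\pi < \pi(G(\P))$) and once as the coarser one with $\delta$ equal to its edge length (for the $\varepsilon$-estimates). The paper's proof is organized identically, with your $m_\varepsilon$ and $\P$-dependent $m$ playing the roles of its $m'$ and $m''$.
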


\begin{proof}
Let $\varepsilon$ be a positive real number.  Both $\pi(\g{m}{3})$ and $\pi(\G{m}{3})$ tend to $2\pi$ as $m$ tends to infinity, and so as long as $m^\prime$ is a large enough natural number, \begin{flalign}\label{Theorem:TheoremFour:a}&\dsp 0 < 2\pi - \pi(\g{m^\prime}{3}) < \varepsilon \quad {\rm and} \quad 0 < \pi(\G{m^\prime}{3}) - 2\pi < \varepsilon.&\end{flalign} Let $\mathcal P$ be a rational circuit.  Take $\delta$ to be a positive real number less than the side length of $\g{m^\prime}{3}$, so that the maximum possible side length of $G(\mathcal P)$ is less than the side length of $\G{m^\prime}{3}$ and the maximum possible side length of $g(\mathcal P)$ is less than the edge length of $\g{m^\prime}{3}$.  Since $\mu(\mathcal P)$ is less than $\delta$, Proposition~\ref{Proposition:PropositionFourTwo} implies that \begin{flalign}\label{Theorem:TheoremFour:b}&\dsp \pi(\g{m^\prime}{3})<\pi(g(\mathcal P)) \quad {\rm and}\quad \pi(\G{m^\prime}{3})>\pi(G(\mathcal P)).&\end{flalign}  Take $m^{\prime\prime}$ to be a large enough natural number so that the edge length of $\pi(\g{m^{\prime\prime}}{3})$ is smaller than the smallest edge length of $g(\mathcal P)$ and the edge length of $\pi(\G{m^{\prime\prime}}{3})$ is smaller than the smallest edge length of $G(\mathcal P)$.  Such a choice of $m^{\prime\prime}$ is possible because the side lengths of both $\pi(\g{m}{3})$ and $\pi(\G{m}{3})$ tend to zero as $m$ tends to infinity.  Proposition~\ref{Proposition:PropositionFourTwo} implies that \begin{flalign}\label{Theorem:TheoremFour:c}&\dsp \pi(g(\mathcal P)) < \pi(\g{m^{\prime\prime}}{3}) < 2\pi \quad {\rm and}\quad \pi(G(\mathcal P)) > \pi(\G{m^{\prime\prime}}{3})> 2\pi.&\end{flalign}  Inequalities \eqref{Theorem:TheoremFour:a}, \eqref{Theorem:TheoremFour:b}, and  \eqref{Theorem:TheoremFour:c} together imply the theorem.
\end{proof}

Denote by $\pi(n)$ the perimeter of a regular $n$-gon inscribed in $\C$ and denote by $\Pi(n)$ the perimeter of a regular circumscribed $n$-gon.  

\begin{corollary}[Corollary to Proposition \ref{Proposition:PropositionFourTwo}]
If $n$ is greater than $m$ and $m$ is greater than 2, then \begin{flalign*}&\dsp (1)\quad  \pi(n) > \pi(m)\quad \text{and}\quad (2)\quad  \Pi(n) < \Pi(m).&\end{flalign*}
\end{corollary}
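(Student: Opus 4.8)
The plan is to deduce both inequalities as immediate consequences of Proposition~\ref{Proposition:PropositionFourTwo}, taking the two circuits to be the vertex sets of the regular inscribed $m$-gon and the regular inscribed $n$-gon. First I would observe that the counterclockwise ordered vertices of a regular inscribed $k$-gon form a \emph{regular} circuit, which is in particular a rational circuit, so that Proposition~\ref{Proposition:PropositionFourTwo} applies. Moreover, by the remarks preceding that proposition, the inscribed polygon $g$ built from this circuit is exactly the regular inscribed $k$-gon, so its perimeter equals $\pi(k)$, and the circumscribed polygon $G$ built from it is exactly the regular circumscribed $k$-gon, so its perimeter equals $\Pi(k)$. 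Thus it suffices to compare the two circuits through the hypothesis of Proposition~\ref{Proposition:PropositionFourTwo}.

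Let $P$ denote the regular $m$-gon circuit and $Q$ the regular $n$-gon circuit. The key step is to verify the hypothesis $\min(P) > \max(Q)$. Since both polygons are regular, every edge of $P$ has the common length $\min(P) = \max(P)$ and every edge of $Q$ has the common length $\min(Q) = \max(Q)$, so it is enough to show that a single edge of the $m$-gon is longer than a single edge of the $n$-gon. Each edge of the regular inscribed $k$-gon is a chord subtending a central arc of measure $(360/k)^\circ$. Since $n > m \ge 3$, both arcs have measure at most $120^\circ$, hence are less than a semicircle, and the arc for the $n$-gon is strictly smaller than that for the $m$-gon. An elementary Euclidean argument, namely that the length of a chord of a fixed circle is strictly increasing in the measure of the arc it subtends as long as that arc is less than a semicircle, then shows that the $n$-gon's edge is strictly shorter than the $m$-gon's edge. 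Hence $\min(P) > \max(Q)$.

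With the hypothesis established, conclusion $(1)$ of Proposition~\ref{Proposition:PropositionFourTwo} yields $\pi(g(Q)) > \pi(g(P))$, which is precisely $\pi(n) > \pi(m)$, and conclusion $(2)$ yields $\pi(G(Q)) < \pi(G(P))$, which is precisely $\Pi(n) < \Pi(m)$; both parts thus follow at once. The only real obstacle is bookkeeping: one must assign the \emph{coarser} polygon (the $m$-gon) to the role of $P$, since it is the polygon whose minimum edge is the larger of the two, and one must phrase the chord-versus-arc monotonicity purely in terms of central angles and elementary geometry, rather than through the formula involving $2\sin\!\big((180/k)^\circ\big)$, so as not to invoke the value of $\pi$ that is itself being defined here.
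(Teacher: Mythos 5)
Your proof is correct and is exactly the argument the paper intends: the corollary is stated without proof precisely because it follows by applying Proposition~\ref{Proposition:PropositionFourTwo} to the regular circuits of the inscribed $m$-gon and $n$-gon, with the hypothesis $\min > \max$ supplied by the elementary fact that a chord of a fixed circle subtending a smaller central angle (here at most $120^\circ$) is shorter. Your additional care in verifying that regular circuits are rational and that $g$ and $G$ of such a circuit recover the regular inscribed and circumscribed polygons only makes explicit what the paper leaves implicit.
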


\begin{corollary}[Corollary to Theorem \ref{Theorem:TheoremFour}]
For any natural numbers $n$ and $m$ larger than 2, 
\begin{flalign*}&\dsp p_m = p_n, \quad P_m = P_n, \quad a_m = a_n, \quad {\rm and}\quad A_m = A_n,&\end{flalign*} where the notation follows Proposition~\ref{pPaAn}.  
\end{corollary}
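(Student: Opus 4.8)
The plan is to collapse the entire corollary to the single assertion that $p_n = 2\pi$ for every $n > 2$, and then let Proposition~\ref{Proposition:PropositionTwoSix} supply the remaining three equalities for free. Indeed, once I know $p_n = 2\pi = p_m$, part (iii) of that proposition gives $P_n = p_n = p_m = P_m$, part (i) gives $a_n = \tfrac12 p_n = \tfrac12 p_m = a_m$, and part (iv) gives $A_n = a_n = a_m = A_m$. So everything rests on showing that the inscribed-perimeter limit $p_n$ of Proposition~\ref{pPaAn} is the same number for every admissible $n$, namely the number $2\pi$ defined just before Theorem~\ref{Theorem:TheoremFour}.

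To prove $p_n = 2\pi$, I would fix $n > 2$ and view each regular inscribed $2^m n$-gon through the circuit machinery. Its $2^m n$ vertices are equally spaced points of $\C$, so together with the repeated initial point they form a regular circuit $\P_m$; being equilateral with integral edges, $\P_m$ is in particular a rational circuit, hence an element of $\mathds Q(\C)$ to which Theorem~\ref{Theorem:TheoremFour} applies. By construction $g(\P_m)$ is exactly the regular inscribed $2^m n$-gon, so $\pi(g(\P_m)) = p_n(m)$, and since every edge of $\P_m$ has the common length $\ell_n(m)$, its mesh is $\mu(\P_m) = \ell_n(m)$.

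Next I would exploit that this mesh tends to zero. Because $p_n(m) = 2^m n\,\ell_n(m)$ converges by Proposition~\ref{pPaAn}, the factor $\ell_n(m)$ must tend to $0$, so $\mu(\P_m) \to 0$. Now fix an arbitrary $\varepsilon > 0$ and let $\delta$ be the threshold produced by Theorem~\ref{Theorem:TheoremFour}. For all sufficiently large $m$ one has $\mu(\P_m) < \delta$, and therefore $0 < 2\pi - p_n(m) < \varepsilon$. Letting $m \to \infty$ and using $p_n(m) \to p_n$, the strict chain relaxes to $0 \le 2\pi - p_n \le \varepsilon$. Since $\varepsilon > 0$ was arbitrary, this forces $2\pi - p_n = 0$, i.e.\ $p_n = 2\pi$, a value manifestly independent of $n$.

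Once the circuit framework is granted, the argument is essentially obstacle-free; the only points demanding care are purely clerical. The first is the bookkeeping that identifies the abstract quantities $\pi(g(\P_m))$ and $\mu(\P_m)$ appearing in Theorem~\ref{Theorem:TheoremFour} with the concrete sequences $p_n(m)$ and $\ell_n(m)$, which hinges on recognizing the equally spaced vertices of a regular inscribed polygon as a legitimate regular (hence rational) circuit. The second is the routine observation that the strict inequality $0 < 2\pi - p_n(m)$ only survives as the non-strict $0 \le 2\pi - p_n$ in the limit, a weakening that is harmless because the arbitrariness of $\varepsilon$ still pins $p_n$ exactly at $2\pi$. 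One could alternatively run the circumscribed half of Theorem~\ref{Theorem:TheoremFour} against the tangent-point circuits to obtain $P_n = 2\pi$ directly, but this would be redundant given Proposition~\ref{Proposition:PropositionTwoSix}.
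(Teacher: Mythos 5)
Your proof is correct and follows essentially the route the paper intends: the corollary is stated without proof precisely because one applies Theorem~\ref{Theorem:TheoremFour} to the regular (hence rational) circuits formed by the vertices of $\g{m}{n}$, whose mesh $\ell_n(m)\to 0$, to conclude $p_n=2\pi$ for every $n$, and then invokes Proposition~\ref{Proposition:PropositionTwoSix} for the remaining three equalities. Your bookkeeping identifying $\pi(g(\P_m))$ with $p_n(m)$ and $\mu(\P_m)$ with $\ell_n(m)$, and the relaxation of the strict inequality in the limit, are both handled correctly.
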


\begin{remark}
The next section presents results somewhat more general than those of the above theorem. Until this point we have only used the convergence of bounded monotone sequences, the axioms of Euclidean geometry, and the properties of the natural numbers. In this next section, we use a cardinality argument to show that not all edge lengths are rational lengths and so this section relies on a consideration that is highly unlikely to have been considered by a contemporary of Archimedes, but is necessary from a modern perspective.
\end{remark}

\subsection{Approximation by General Circuits}

Each rational length corresponds to a pair of two integers, a numerator and a denominator, making the set of rational lengths a countably infinite set.  However, there are uncountably many possible edge lengths and so this necessitates a study of circuits that are not necessarily rational.

\begin{proposition}\label{Proposition:PropositionFourFour}
Given any circuit $(R_1, \dots, R_{n+1})$ and a positive real number $\varepsilon$, there is a rational circuit $(P_1,\dots, P_{n+1})$ such that \begin{flalign*}&\dsp |\pi(g(R_1, \dots, R_{n+1})) - \pi(g(P_1,\dots, P_{n+1}))|<\varepsilon.&\end{flalign*}
\end{proposition}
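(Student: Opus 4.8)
The plan is to reparametrize a circuit by the family of arcs between its consecutive vertices and to reduce the proposition to a density-and-closure statement about those arcs. For the given circuit $(R_1,\dots,R_{n+1})$, let $\sigma_i$ denote the fraction of $\C$ cut off by the counterclockwise arc from $R_i$ to $R_{i+1}$. Because the circuit closes up and is counterclockwise ordered with arcs less than half of $\C$, the numbers $\sigma_1,\dots,\sigma_n$ all lie in $\left(0,\tfrac12\right)$ and satisfy $\sum_{i=1}^n \sigma_i = 1$. The chord $\ell(\overline{R_iR_{i+1}})$ depends only on $\sigma_i$, through a function that is continuous (indeed strictly increasing) on $\left[0,\tfrac12\right]$; hence the inscribed perimeter $\pi(g(R_1,\dots,R_{n+1})) = \sum_{i} \ell(\overline{R_iR_{i+1}})$ is a continuous function of $(\sigma_1,\dots,\sigma_n)$.

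The crucial observation is a clean characterization of rational lengths in terms of arcs. Stepping around $\C$ by equal chords of length $\ell$ is the same as stepping by the equal arc that $\ell$ subtends, and the recursive process of the preceding definition returns to its starting point exactly when some integer multiple of that arc is an integer number of full revolutions. Thus a chord is a rational length if and only if the arc it subtends is a rational fraction of $\C$, with ${\mathcal N}(\ell)$ and ${\mathcal D}(\ell)$ recording that fraction in lowest terms. In particular the rational fractions are dense in $\left(0,\tfrac12\right)$, which is what licenses the approximation.

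The construction then proceeds as follows. First I choose rational numbers $\sigma_1',\dots,\sigma_{n-1}'$, each within $\eta$ of the corresponding $\sigma_i$, and then I set $\sigma_n' = 1 - \sum_{i=1}^{n-1}\sigma_i'$. The key point is that $\sigma_n'$ is automatically rational, being a difference of rationals, so all $n$ of the new arcs are rational fractions of $\C$, and moreover $|\sigma_n' - \sigma_n| \le (n-1)\eta$. Let $(P_1,\dots,P_{n+1})$ be the vertices obtained by placing $P_1 = R_1$ and cutting off the successive arcs $\sigma_1',\dots,\sigma_n'$. Since these arcs sum to $1$, one has $P_{n+1} = P_1$, so the sequence is a genuine circuit, and by the characterization above each edge is a rational length, so it is a rational circuit. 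As each $\sigma_i$ is bounded strictly away from $0$ and $\tfrac12$, for $\eta$ small enough every $\sigma_i'$ still lies in $\left(0,\tfrac12\right)$, so the new circuit remains counterclockwise ordered with arcs less than half of $\C$. Finally, by uniform continuity of the chord-length function on $\left[0,\tfrac12\right]$ I may shrink $\eta$ so that each of the $n$ chord lengths changes by less than $\varepsilon/n$ when $\sigma_i$ is replaced by $\sigma_i'$; summing yields $|\pi(g(R_1,\dots,R_{n+1})) - \pi(g(P_1,\dots,P_{n+1}))| < \varepsilon$, as required.

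The main obstacle, and indeed the whole content of the argument, is the closure constraint: a naive coordinatewise rational perturbation of the vertices need not yield a circuit, since an arbitrary rational approximation of all $n$ arcs will generally fail to sum to exactly $1$ and so will not close up. Absorbing the accumulated error into the single final arc $\sigma_n'$, together with the trivial but decisive fact that a difference of rationals is rational, is precisely what reconciles ``closes up'' with ``all edges rational.'' The only other point requiring care is the elementary equivalence between rational edge lengths and rational arc fractions, which underlies both the density step and the assertion that the constructed circuit is rational.
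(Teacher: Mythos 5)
Your argument is correct, but it takes a genuinely different route from the paper's. The paper approximates the circuit by snapping vertices rather than arcs: it fixes $m$ so large that the edge length $\ell$ of the regular inscribed polygon $\g{m}{3}$ is less than $\varepsilon/(2n)$, and replaces each $R_i$ by the first vertex $P_i$ of that polygon counterclockwise from $R_i$; then $\ell(\overline{P_iR_i}) < \ell$, so the triangle inequality gives $|\ell(\overline{P_iP_{i+1}}) - \ell(\overline{R_iR_{i+1}})| < 2\ell < \varepsilon/n$, and since all the $P_i$ lie on one regular polygon, every edge is automatically a rational length and the perturbed sequence is automatically a circuit. The closure constraint that you rightly identify as the main obstacle for an arcwise perturbation simply never arises in that construction, and its only analytic input is the triangle inequality. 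Your construction instead perturbs the central-angle fractions $\sigma_i$, absorbs the closure defect into the single rational arc $\sigma_n'$, and appeals to two additional facts: that a chord is a rational length exactly when its arc is a rational fraction of $\C$ (true, and a fair reading of the paper's stepping definition of ${\mathcal N}$ and ${\mathcal D}$), and that chord length is a uniformly continuous, strictly increasing function of the central angle on $[0^\circ,180^\circ]$ (also true, but a further lemma in the paper's deliberately austere framework, where arc length is not yet defined --- so $\sigma_i$ must be read off from the angle $\angle R_iOR_{i+1}$, as you implicitly do, rather than from arc length). What your route buys is explicit control of the closure problem, the freedom to use arcs with unrelated denominators, and the avoidance of a small degeneracy the paper's proof leaves unaddressed (two nearby $R_i$ could snap to the same vertex of $\g{m}{3}$, collapsing an edge); what the paper's route buys is a shorter argument resting only on the triangle inequality and a single ambient regular polygon.
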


\begin{proof}
Suppose that a circuit $\mathcal R$ has $n$ distinct points and thus corresponds to a vertex set of an inscribed polygon with $n$ sides. Let $\varepsilon$ be a positive real number.  There is a natural number $m$ such that the edge length $\ell$ of $\g{m}{3}$ is smaller than $\frac{\varepsilon}{2n}$.  Take $R_1$ to be a point of $\g{m}{3}$.  To each point $R_i$ of $\mathcal R$, take $P_i$ to be $R_i$ if $R_i$ is a vertex of $\g{m}{3}$.  Otherwise, take $P_i$ to be the first vertex of $\g{m}{3}$ that is counterclockwise from $R_i$. With such a choice of $P_i$, the length $\ell({\overline{P_iR_i}})$ is less than $\ell$.  The circuit $\P$ that is equal to $(P_1, \dots, P_{n+1})$ is a rational circuit.  Furthermore, \begin{flalign*}&\dsp \left|\overline{\ell(P_iP_{i+1}}) -
\ell(\overline{R_iR_{i+1}})\right| < 2\ell < \frac{\varepsilon}{n}&\end{flalign*} for each natural number $i$ in $[1, k+1]$.  Therefore, \begin{flalign*}&\dsp |\pi(g(\P)) - \pi(g({\mathcal R}))| < \varepsilon.&\end{flalign*}%
\end{proof}

\begin{corollary}
For any circuits $(P_1, \dots, P_{k+1})$ and $(Q_1,\dots, Q_{l+1})$ on $\C$, \begin{flalign*}&\dsp \min(P_1, \dots, P_{k+1}) > \max(Q_1,\dots, Q_{l+1}) \implies \pi(Q_1,\dots, Q_{l+1}) \geq \pi(P_1, \dots, P_{k+1}).&\end{flalign*}
\end{corollary}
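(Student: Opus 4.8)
The plan is to reduce the general case to the rational case already settled in Proposition~\ref{Proposition:PropositionFourTwo}, then recover the non-strict inequality as a limit of strict ones. Write $P = (P_1,\dots,P_{k+1})$ and $Q = (Q_1,\dots,Q_{l+1})$, and set $\gamma := \min(P_1,\dots,P_{k+1}) - \max(Q_1,\dots,Q_{l+1})$, which the hypothesis guarantees is strictly positive. The governing idea is that a sufficiently fine rational approximation of each circuit perturbs its edge lengths by less than $\gamma/2$, so the separation $\min > \max$ survives the approximation and Proposition~\ref{Proposition:PropositionFourTwo} becomes available on the approximants.

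First I would fix an arbitrary $\varepsilon > 0$ and apply Proposition~\ref{Proposition:PropositionFourFour} to each circuit, obtaining rational circuits $\tilde P = (\tilde P_1,\dots,\tilde P_{k+1})$ and $\tilde Q = (\tilde Q_1,\dots,\tilde Q_{l+1})$, with the same numbers of vertices, whose inscribed perimeters satisfy $|\pi(g(P)) - \pi(g(\tilde P))| < \varepsilon$ and $|\pi(g(Q)) - \pi(g(\tilde Q))| < \varepsilon$. The construction in that proof snaps each vertex onto a fine inscribed grid $\g{m}{3}$ and thereby controls \emph{each} individual edge length to within $2\ell$, where $\ell$ is the mesh of the grid and may be taken arbitrarily small. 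Taking the grid fine enough, I would simultaneously demand $|\ell(\overline{\tilde P_i\tilde P_{i+1}}) - \ell(\overline{P_iP_{i+1}})| < \gamma/2$ for every edge of $P$, and the analogous bound for every edge of $Q$.

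With this choice, every edge of $\tilde P$ is longer than $\min(P_1,\dots,P_{k+1}) - \gamma/2$ and every edge of $\tilde Q$ is shorter than $\max(Q_1,\dots,Q_{l+1}) + \gamma/2$, whence $\min(\tilde P_1,\dots,\tilde P_{k+1}) > \max(\tilde Q_1,\dots,\tilde Q_{l+1})$: the gap condition is preserved. Proposition~\ref{Proposition:PropositionFourTwo} then applies to the two rational circuits and yields $\pi(g(\tilde Q)) > \pi(g(\tilde P))$, and chaining this with the two perimeter estimates gives
\[ \pi(g(Q)) > \pi(g(\tilde Q)) - \varepsilon > \pi(g(\tilde P)) - \varepsilon > \pi(g(P)) - 2\varepsilon. \]
Since $\varepsilon$ was arbitrary, letting $\varepsilon \to 0$ collapses this into $\pi(g(Q)) \geq \pi(g(P))$, as claimed.

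The hard part will be exactly the edgewise control invoked in the second paragraph: Proposition~\ref{Proposition:PropositionFourFour} is advertised only as a bound on the total perimeter, yet to legitimately apply Proposition~\ref{Proposition:PropositionFourTwo} to the approximants I need enough per-edge control to preserve the strict separation $\min > \max$. Fortunately this control is already implicit in the construction, since the snapping bound $2\ell$ is uniform across all edges; hence the reduction is sound, and the remaining passage to a non-strict inequality in the limit is then routine.
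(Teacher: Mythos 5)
Your proposal is correct and follows essentially the same route as the paper: approximate both circuits by rational ones via Proposition~\ref{Proposition:PropositionFourFour} while preserving the $\min>\max$ separation, apply Proposition~\ref{Proposition:PropositionFourTwo} to the approximants, and let $\varepsilon\to 0$ to recover the non-strict inequality. In fact you supply the per-edge control argument that the paper explicitly leaves to the reader as a ``straightforward detail,'' so nothing is missing.
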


\begin{proof}
Suppose that $\mathcal P$ is the circuit $(P_1, \dots, P_{k+1})$ and that $\mathcal Q$ is the circuit $(Q_1,\dots, Q_{l+1})$.  Suppose further that $\min(P_1, \dots, P_{k+1})$ is greater than $\max(Q_1,\dots, Q_{l+1})$.  For any positive real number $\varepsilon$, there are rational circuits $\hat{\mathcal P}$ and $\hat{\mathcal Q}$ such that \begin{flalign*}&\dsp |\pi(g(\hat{\mathcal P})) - \pi(g({\mathcal P}))| < \frac{\varepsilon}{2} \quad {\rm and} \quad |\pi(g(\hat{\mathcal Q})) - \pi(g({\mathcal Q}))| < \frac{\varepsilon}{2}&\end{flalign*} and furthermore such that \begin{flalign*}&\dsp \min(\hat{P}_1, \dots, \hat{P}_{k+1}) > \max(\hat{Q}_1,\dots, \hat{Q}_{l+1}).&\end{flalign*}  We leave the straightforward details of the proof to the reader.  Proposition~\ref{Proposition:PropositionFourFour} implies that $\pi(\hat{\mathcal Q})$ is greater than $\pi(\hat{\mathcal P})$, and so \begin{flalign*}&\dsp \pi({\mathcal Q}) + \varepsilon > \pi({\mathcal P}).&\end{flalign*}  Since the above inequality holds for any positive $\varepsilon$, $\pi({\mathcal Q})$ is greater than or equal to $\pi({\mathcal P}).$
\end{proof}

Given any two points on $\C$, there is an $m$ large enough so that the arc between the two points contains two vertices of $\g{m}{3}$.  The following lemma follows from this fact and the fact that rotations preserve the ordering of points on $\C$.

\begin{lemma}
Given points $P$, $Q$, and $R$ on $\C$ in counterclockwise order, there is a point $S$ between $Q$ and $R$ such that $\ell(\overline{PS})$ is a rational length.
\end{lemma}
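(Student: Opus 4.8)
The plan is to exploit the fact that the chord joining any two vertices of a regular inscribed $2^m\cdot 3$-gon is a rational length, and that for large $m$ such polygons have vertices packed densely enough that one lands strictly between $Q$ and $R$ while the polygon still passes through $P$.

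First I would record the observation connecting the geometry to the definition of rational length. Fix a regular inscribed polygon congruent to $\g{m}{3}$ but rotated so that one of its vertices coincides with $P$; call it $\mathcal G$. Its $2^m\cdot 3$ vertices are equally spaced on $\C$ at angular separation $2\pi/(2^m\cdot 3)$, so the central angle subtended by the chord from $P$ to any other vertex $S$ of $\mathcal G$ is an integer multiple of $2\pi/(2^m\cdot 3)$, hence a rational multiple of $2\pi$. I claim this forces $\ell(\overline{PS})$ to be a rational length: if $\theta\in(0,\pi)$ is the non-reflex central angle of the chord, then $\theta$ is a rational multiple of $2\pi$, so stepping counterclockwise by $\theta$ from any starting point returns to that point after finitely many steps, which is exactly the terminating condition in the recursive construction defining a rational length. (One also needs $\ell(\overline{PS})<2$, i.e. $S$ not antipodal to $P$, which I handle below.)

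Next I would produce such a vertex $S$ inside the arc from $Q$ to $R$. Since $P$, $Q$, $R$ occur in counterclockwise order they are distinct, so the counterclockwise open arc $\A$ from $Q$ to $R$ has positive angular length $\beta$, and $P\notin\A$. Choose $m$ large enough that $2\pi/(2^m\cdot 3) < \beta/2$. Because rotations preserve both the cyclic ordering and the spacing of points, the vertices of $\mathcal G$ cut $\C$ into equal arcs of length $2\pi/(2^m\cdot 3)$; an open arc of length $\beta$ exceeding twice this spacing must contain at least two vertices of $\mathcal G$, since an arc meeting at most one vertex lies inside an interval of length $2\cdot 2\pi/(2^m\cdot 3) < \beta$. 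At most one vertex of $\mathcal G$ can be antipodal to $P$, so among these (at least two) vertices lying in $\A$ I select one, $S$, that is not antipodal to $P$.

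This $S$ finishes the proof: it lies strictly between $Q$ and $R$ by construction, it is distinct from $P$ because $P\notin\A$, and $\overline{PS}$ is a chord of $\mathcal G$ between non-antipodal vertices, so $\ell(\overline{PS})<2$ and is a rational length by the opening observation. The only genuinely delicate points, which I would state carefully rather than compute in detail, are the translation between ``chord between vertices of a regular $2^m\cdot 3$-gon'' and the recursive definition of rational length, and the elementary arc-counting that guarantees \emph{two} (not merely one) vertices in $\A$, so that the excluded diameter case $\ell=2$ can be dodged.
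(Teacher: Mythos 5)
Your argument is correct and is essentially the paper's own: the paper disposes of this lemma in one sentence by noting that for large $m$ the arc from $Q$ to $R$ contains two vertices of $\g{m}{3}$ and that rotations preserve ordering, which is exactly the rotate-a-vertex-to-$P$-and-count-vertices argument you carry out. You merely fill in the details the paper leaves implicit, namely that a chord between non-antipodal vertices of a regular $2^m\cdot 3$-gon closes up under the stepping construction and hence is a rational length, and that two vertices in the arc let you dodge the antipodal case.
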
%
Establish the following notation for the statements and proofs of Lemma~\ref{Lemma:LemBij} and Lemma~\ref{Lemma:LemBij2}.  Suppose that $A$, $B$ and $C$ are points on $\C$ in counterclockwise order and the arc from $A$ to $C$ is less than half of $\C$.  For any points $X$ and $Y$ on $\C$, denote by $P_{X, Y}$ the intersection of the lines tangent to $\C$ at $X$ and $Y$ \lpar Figure~\ref{r} \rpar.

\bigskip

\dwpicr

\bigskip

\begin{lemma}\label{Lemma:LemBij}
There is an order preserving bijection from the points on the line segment $\overline{P_{A, B}P_{A, C}}$ and the points on the arc between $B$ and $C$.
\end{lemma}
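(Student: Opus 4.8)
The plan is to exhibit the claimed bijection explicitly as the tangent-intersection map. For a point $X$ on the arc between $B$ and $C$, let $L_X$ denote the line tangent to $\C$ at $X$ and set $f(X) = P_{A,X}$, the intersection of $L_X$ with the tangent line $L_A$ to $\C$ at $A$. First I would check that $f$ is well defined: since $A$, $B$, $C$ occur in counterclockwise order and the arc from $A$ to $C$ is less than half of $\C$, every $X$ on the arc from $B$ to $C$ has the arc from $A$ to $X$ also less than half of $\C$, so Lemma~\ref{Lemma:LemmaMain} guarantees that $L_A$ and $L_X$ meet in a single point. By definition $f(B) = P_{A,B}$ and $f(C) = P_{A,C}$, and crucially every value $f(X) = P_{A,X}$ lies on the fixed line $L_A$; hence the image of $f$ is contained in the line carrying $\overline{P_{A,B}P_{A,C}}$, and the whole problem collapses to analyzing positions along this one line. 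The bijection asserted in the statement is then $f^{-1}$, which is order preserving exactly when $f$ is.

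The key step is to locate $f(X)$ on $L_A$ by a half-angle at the center. Applying Lemma~\ref{Lemma:LemmaMain} to the pair $A$, $X$ shows that the right triangles $\triangle OAP_{A,X}$ and $\triangle OXP_{A,X}$ are congruent, so $\overline{OP_{A,X}}$ bisects $\angle AOX$ and $m(\angle AOP_{A,X}) = \tfrac12 m(\angle AOX)$. Since $\overline{OA}$ is a radius meeting the tangent $L_A$ at a right angle, triangle $\triangle OAP_{A,X}$ is right-angled at $A$ with $\ell(\overline{OA}) = 1$, and therefore $\ell(\overline{AP_{A,X}}) = \tan\!\big(\tfrac12 m(\angle AOX)\big)$. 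Moreover every such $P_{A,X}$ lies on the same ray of $L_A$ emanating from $A$ (the side toward which the points of the arc lie), so this single distance determines $f(X)$ on $L_A$ with a consistent sign.

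Finally I would read off monotonicity and surjectivity from this formula. As $X$ traverses the arc counterclockwise from $B$ to $C$, the central angle $m(\angle AOX)$ increases continuously and strictly from $m(\angle AOB)$ to $m(\angle AOC)$, and it stays strictly between $0^\circ$ and $180^\circ$ because the arc from $A$ to $C$ is less than half of $\C$; hence $\tfrac12 m(\angle AOX)$ ranges in $(0^\circ, 90^\circ)$, where $\tan$ is continuous and strictly increasing. Therefore $\ell(\overline{AP_{A,X}})$ is a strictly increasing continuous function of the arc position, so $f$ is injective and, by the intermediate value theorem, maps the arc onto exactly the segment from $P_{A,B}$ to $P_{A,C}$; order preservation is immediate, since distance from $A$ increases from $P_{A,B}$ to $P_{A,C}$ along the segment. (Alternatively, surjectivity can be obtained without continuity by sending a point $Q$ on the open segment to the point of tangency of the second tangent line from $Q$ to $\C$, which is the explicit inverse.) The only real obstacle is the bookkeeping of orientations — confirming that all the intersection points fall on one ray from $A$ and that the counterclockwise order on the arc matches the $P_{A,B}$-to-$P_{A,C}$ order on the segment — but the monotone dependence of $\ell(\overline{AP_{A,X}})$ on $m(\angle AOX)$ settles this cleanly.
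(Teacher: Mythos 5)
Your proof is correct in its essentials, but it reaches the result by a noticeably different route than the paper. The paper constructs \emph{both} directions explicitly and synthetically: for a point $P$ on the tangent segment it draws the auxiliary circle centered at $P$ of radius $\ell(\overline{CP})$, shows (via tangency of the radius $\overline{OC}$ to all such circles) that it meets $\C$ at exactly one point besides the tangency point, and orders the images by comparing the radii of these nested circles; then it separately sends an arc point $Q$ to the tangent-line intersection and checks the two maps are mutually inverse. You instead work with the single map $X\mapsto P_{A,X}$ and extract everything from the half-angle consequence of Lemma~\ref{Lemma:LemmaMain}, namely $\ell(\overline{AP_{A,X}})=\tan\bigl(\tfrac12 m(\angle AOX)\bigr)$, which is a cleaner and more transparent way to see injectivity and order preservation (the paper's radius comparison is secretly the same monotonicity). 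The trade-off is in surjectivity: your main line appeals to continuity of the central angle along the arc and the intermediate value theorem, which sits outside the paper's deliberately minimal toolkit (Euclidean axioms plus bounded monotone convergence); your parenthetical fallback --- the second tangent line from a point $Q$ of the segment --- is exactly the paper's inverse map, but to make it a complete argument you still must verify that its tangency point lands on the arc between $B$ and $C$ rather than elsewhere on $\C$, and that verification is precisely what the paper's nested-circle inequalities $\ell(\overline{BC})<\ell(\overline{QC})<\ell(\overline{AC})$ accomplish. So either finish the continuity argument honestly or import that one containment check; with either patch your argument is complete.
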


\begin{proof}
Suppose $P$ is a point on $\overline{P_{A,C}P_{B,C}}$.  Let $\C_P$ be the circle of radius $\ell(\overline{CP})$ centered at $P$.  The circle $\C_p$ intersects $\C$ at precisely two points, at the point $C$ and at a point $Q$ that lies on the arc from $A$ to $B$.  Define the circles $\C_{P_{B,C}}$ and $\C_{P_{A,C}}$ in the same way as $\C_P$.  The circles $\C_{P_{B,C}}$, $\C_P$, and $\C_{P_{A,C}}$ all intersect at $C$ and, since they have different radii, they can meet at no more than two points.  Since $\overline{OC}$ is tangent to all three circles, the three circles meet only at $C$.  The radius of $\C_{P_{A,C}}$ is greater than the radius of $\C_{P}$, which is greater than the radius of $\C_{P_{B,C}}$, therefore \begin{flalign*}&\dsp \ell(\overline{BC}) <  \ell(\overline{QC})  < \ell(\overline{AC}).&\end{flalign*}  Since all three points lie counterclockwise from $A$ on an arc less that half of a circle, $Q$ is clockwise from $B$ and counterclockwise from $A$.  

Suppose that $Q$ is a point between $A$ and $B$ on the arc from $A$ to $B$, that $B$ is clockwise from $C$, and that the arc from $A$ to $C$ is less than half of a circle.  Let $L$ be the line tangent to $\C$ at $Q$.  Since $\angle QOC$ is less than a straight line, $L$ intersects the line tangent to $\C$ at $C$ at a point $P$ and the intersection occurs on the same side of $C$ as $P_{A,C}$ and $P_{B,C}$.  The point $Q$ is between $A$ and $B$, and so \begin{flalign*}&\dsp \ell(\overline{BC}) <  \ell(\overline{QC})  < \ell(\overline{AC}),\quad \text{hence}\quad m(\angle{BOC}) <  m(\angle{QOC})  < m(\angle{AOC}).&\end{flalign*}   The line segments $\overline{BO}$, $\overline{QO}$, and $\overline{AO}$ are all radii, implying the equality of $\ell(\overline{BO})$, $\ell(\overline{QO})$, and $\ell(\overline{CO})$ and, therefore, the inequalities \begin{flalign*}&\dsp \ell(\overline{CP_{B,C}}) <  \ell(\overline{CP})  < \ell(\overline{CP_{A,C}}).&\end{flalign*}  The point $P$ therefore lies on the line segment $\overline{P_{A,C}P_{B,C}}$.  Note that Lemma \ref{Lemma:LemmaMain} implies that $\overline{PQ}$ and $\overline{CP}$ are congruent and so the map we initially constructed will map $P$ to the point $Q$.

Let $\phi$ be the map taking points on $\overline{P_{A,C}P_{B,C}}$ to points on the arc from $A$ to $B$ and let $\psi$ be the map taking points on the arc from $A$ and $B$ to point on $\overline{P_{A,C}P_{B,C}}$.  These functions are inverses of each other, and so both are invertible, hence bijective.
\end{proof}

\begin{lemma}\label{Lemma:LemBij2}
Let $\varepsilon$ be a positive real number.  There is a $Q$ on the arc from $A$ to $B$ such that $Q$ is not equal to $B$ and \begin{flalign}\label{lastineq}&\dsp \left|\ell(\overline{AP_{A,B}}) +  \ell(\overline{P_{A,B}B}) +\ell(\overline{BP_{B,C}}) + \ell(\overline{P_{B,C}C})\right.&\notag\\&\dsp\qquad\qquad\qquad\qquad\left. - \left(\ell(\overline{AP_{A,Q}}) +  \ell(\overline{P_{A,Q}Q}) + \ell(\overline{QP_{C,Q}}) + \ell(\overline{P_{C,Q}C})\right)\right| < \varepsilon.&\end{flalign} Furthermore, any point $Q^\prime$ on the arc from $Q$ to $B$ will also satisfy the above estimate where $Q$ is replaced by $Q^\prime$.
\end{lemma}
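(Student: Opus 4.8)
The plan is to recognize that both bracketed quantities are lengths of circumscribed polygonal paths, and that the quantity in~\eqref{lastineq} simply records how that length changes when the middle inscribed vertex is moved from $B$ to $Q$; the lemma then reduces to the continuity of this length in the position of the middle vertex. Concretely, for a point $X$ on the open arc from $A$ to $C$ set
\[ f(X) = \ell(\overline{AP_{A,X}}) + \ell(\overline{P_{A,X}X}) + \ell(\overline{XP_{C,X}}) + \ell(\overline{P_{C,X}C}), \]
which is well defined because each of the arcs from $A$ to $X$ and from $X$ to $C$ is shorter than the arc from $A$ to $C$, hence less than half of $\C$, so the tangents involved meet by Lemma~\ref{Lemma:LemmaMain}. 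Since $P_{C,B}=P_{B,C}$, the first bracket in~\eqref{lastineq} is exactly $f(B)$ and the second is exactly $f(Q)$, so the inequality reads $|f(B)-f(Q)|<\varepsilon$, and the claim is that such a $Q$ can be found arbitrarily close to $B$ on its $A$-side, together with every intermediate point.

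First I would make the continuity of $f$ transparent. The two tangent segments drawn to $\C$ from the external point $P_{A,X}$ have equal length, and Lemma~\ref{Lemma:LemmaMain} shows that $\overline{OP_{A,X}}$ bisects $\angle AOX$; since $\C$ is a unit circle, the right triangle with unit leg $\overline{OX}$ gives $\ell(\overline{AP_{A,X}})=\ell(\overline{P_{A,X}X})=\tan\beta_1$, where $\beta_1=\tfrac12 m(\angle AOX)$, a quantity that is a continuous, strictly increasing function of $\beta_1$. The same reasoning gives $\ell(\overline{XP_{C,X}})=\ell(\overline{P_{C,X}C})=\tan\beta_2$ with $\beta_2=\tfrac12 m(\angle XOC)$, so that
\[ f(X) = 2\tan\beta_1 + 2\tan\beta_2, \qquad \beta_1 + \beta_2 = \Theta, \]
where $2\Theta=m(\angle AOC)$ is fixed and $\Theta<90^\circ$ because the arc from $A$ to $C$ is less than half of $\C$. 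As $X$ moves along the arc, $\beta_1$ and $\beta_2$ vary continuously and stay strictly inside $(0^\circ,90^\circ)$, so $f$ is continuous on the arc; the same conclusion also follows from Lemma~\ref{Lemma:LemBij}, since an order preserving bijection between the arc and the segment $\overline{P_{A,C}P_{B,C}}$ is automatically a homeomorphism of intervals and hence moves $P_{A,X}$ and $P_{C,X}$ continuously with $X$.

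Finally I would run the standard open-neighborhood argument. Because $B$ is an interior point of the arc from $A$ to $C$ and $f$ is continuous there, the set $\{X : |f(X)-f(B)|<\varepsilon\}$ is open and contains $B$; let $U$ be the connected component of $B$ in this set, which is an open sub-arc about $B$. Choosing for $Q$ any point of $U$ strictly between $A$ and $B$ (such points exist since $U$ is open and $B$ is not the $A$-endpoint of the arc) gives $Q\neq B$ with $|f(Q)-f(B)|<\varepsilon$, which is exactly~\eqref{lastineq}. The \emph{furthermore} clause is then immediate: any $Q'$ on the arc from $Q$ to $B$ lies between the two points $Q,B\in U$, and $U$ is an arc, hence $Q'\in U$ and the same estimate holds. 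The only delicate point, and the main obstacle, is the continuity of $f$ at $B$, that is, the continuous dependence of the tangent-intersection points $P_{A,X}$ and $P_{C,X}$ on $X$ with no degeneration as $X\to B$; this is precisely what the hypothesis that the arc from $A$ to $C$ is less than half of $\C$ buys us, since it keeps both half-angles $\beta_1,\beta_2$ bounded away from $90^\circ$ so that the tangents always meet and the four lengths stay finite and vary continuously.
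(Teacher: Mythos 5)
Your proof is correct, and it rests on the same underlying mechanism as the paper's --- the controlled, monotone dependence of the tangent-intersection points on the arc point, which is exactly what Lemma~\ref{Lemma:LemBij} provides --- but you package it differently. The paper argues quantitatively and constructively: it uses Lemma~\ref{Lemma:LemBij} twice to pick points $z$ and then $q$ on $\overline{AP_{A,B}}$ within $\tfrac{\varepsilon}{4}$ of $P_{A,B}$, takes $Q$ to be the arc point whose tangent passes through $q$, and lets the order-preservation of the bijection deliver the ``furthermore'' clause; notably, it only makes explicit the control of the two terms involving $P_{A,\cdot}$ and leaves the reader to check that the two $C$-side terms are likewise controlled. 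You instead define $f(X)$ as the total circumscribed path length with middle vertex $X$, observe via the equal-tangent-segment property and Lemma~\ref{Lemma:LemmaMain} that $f(X)=2\tan\beta_1+2\tan\beta_2$ with $\beta_1+\beta_2$ fixed and both half-angles bounded away from $90^\circ$, and then run the standard continuity-plus-connected-component argument. This buys uniform treatment of all four terms at once (so your write-up actually closes the small gap the paper leaves implicit) and makes the ``furthermore'' clause an immediate consequence of order-convexity of the component; what it costs is a slight departure from the paper's aesthetic, since $\tan$ of an angle appears before the trigonometric functions are officially introduced --- though this is harmless, as you use it only as the leg ratio of a right triangle with unit leg, and your alternative justification of continuity directly from the order-preserving bijection of Lemma~\ref{Lemma:LemBij} (a monotone surjection of intervals, hence continuous) keeps the argument entirely within the paper's elementary framework.
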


\begin{proof}
Pick a point $z$ on $\overline{AP_{A,B}}$ such that $\ell(\overline{zP_{A,B}})$ is less than $\frac{\varepsilon}{4}$.   Lemma~\ref{Lemma:LemBij} guarantees the existence of a point $Z$ on the arc between $A$ and $B$ such that $z$ is the point of intersection of the line tangent to $\C$ at $Z$ and the line segment $\overline{AP_{A,B}}$.  Pick a point $q$ on $\overline{zP_{A,B}}$ such that $\ell(\overline{qP_{A,B}})$ is less than $\frac{\varepsilon}{4}$.   Lemma~\ref{Lemma:LemBij} guarantees the existence of a point $Q$ on the arc from $Z$ to $B$ such that $q$ is the point of intersection of the line tangent to $\C$ at $Q$ and the line segment $\overline{AP_{A,B}}$.  Using the established notation, $q$ is the point $P_{A,Q}$ and, furthermore, $Q$ will satisfy the estimate \eqref{lastineq}.  Since the bijection given in Lemma~\ref{Lemma:LemBij} is order preserving, if $Q^\prime$ is any point not equal to $B$ in the arc from $Q$ to $B$, then the point $Q^\prime$ will also satisfy the estimate \eqref{lastineq}.
\end{proof}

\begin{proposition}\label{Proposition:PropositionFourFive}
Suppose that $(P_1, \dots, P_{k+1})$ is a circuit with the property that the arc between any adjacent points is less than a fourth of a circle.  Suppose $\varepsilon$ is a positive real number. There is a rational circuit $(Q_1,\dots, Q_{l+1})$ such that \begin{flalign*}&\dsp |\pi(G(P_1, \dots, P_{k+1})) - \pi(G(Q_1,\dots, Q_{l+1}))|<\varepsilon.&\end{flalign*}
\end{proposition}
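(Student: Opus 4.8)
The plan is to mimic the inscribed-case argument of Proposition~\ref{Proposition:PropositionFourFour}, but to move the points of the circuit to rational positions \emph{one at a time}, invoking Lemma~\ref{Lemma:LemBij2} to keep each move's effect on the circumscribed perimeter under control. The enabling observation is a clean decomposition of that perimeter. Writing $V_j$ for the vertex $P_{j,j+1}$ of $G(P_1,\dots,P_{k+1})$ cut out by the tangents at $P_j$ and $P_{j+1}$, the edge of $G$ lying on the tangent at $P_{j+1}$ runs from $V_j$ through $P_{j+1}$ to $V_{j+1}$, and Lemma~\ref{Lemma:LemmaMain} forces the tangent segments $\overline{V_jP_j}$ and $\overline{V_jP_{j+1}}$ to be congruent. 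Writing $t_{j,j+1}$ for this common tangent half-length attached to the arc $P_jP_{j+1}$, one obtains $\pi(G(P_1,\dots,P_{k+1}))=2\sum_j t_{j,j+1}$, where $t_{j,j+1}$ depends only on the arc between $P_j$ and $P_{j+1}$.

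The decisive consequence is that replacing a single point $P_i$ by a nearby point leaves every $t_{j,j+1}$ unchanged except the two adjacent to $P_i$. Thus, with $A=P_{i-1}$, $B=P_i$, $C=P_{i+1}$, the change in $\pi(G)$ produced by moving $B$ to a point $Q$ is exactly
\begin{flalign*}&\dsp 2t_{A,Q}+2t_{Q,C}-2t_{A,B}-2t_{B,C},&\end{flalign*}
which is precisely the quantity bounded in Lemma~\ref{Lemma:LemBij2}, since each sum of two tangent segments appearing there equals one of these $2t$ terms. Hence, for any prescribed tolerance, Lemma~\ref{Lemma:LemBij2} furnishes a whole sub-arc of admissible target points $Q$ lying near $P_i$ on the $A$-side for which this perimeter change is below the tolerance.

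First I would set $Q_1=P_1$ and process $i=2,\dots,k$ in order. At step $i$ the points $Q_1,\dots,Q_{i-1}$ are already fixed, each near the corresponding $P$ and at a rational arc-distance from $Q_1$; I apply Lemma~\ref{Lemma:LemBij2} with $A=Q_{i-1}$, $B=P_i$, $C=P_{i+1}$ and tolerance $\varepsilon/(k-1)$ to obtain an admissible sub-arc abutting $P_i$. Inside that sub-arc, the density lemma stated just before Lemma~\ref{Lemma:LemBij} (every sub-arc contains a point $S$ with $\ell(\overline{Q_1S})$ a rational length) lets me choose $Q_i$ with $\ell(\overline{Q_1Q_i})$ rational. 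Because each $Q_i$ sits at a rational arc-distance from the common base $Q_1$, all consecutive arcs---including the closing arc from $Q_k$ back to $Q_{k+1}=Q_1$, whose length is the rational complement of the cumulative arc---are rational, so $(Q_1,\dots,Q_{k+1})$ is a rational circuit.

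Finally, writing $\Pi_i$ for the perimeter after the first $i-1$ moves, each step satisfies $|\Pi_i-\Pi_{i-1}|<\varepsilon/(k-1)$ by the second paragraph, and telescoping with the triangle inequality yields $|\pi(G(P_1,\dots,P_{k+1}))-\pi(G(Q_1,\dots,Q_{k+1}))|<\varepsilon$. The hypothesis that each arc is less than a \emph{fourth} of a circle is exactly what keeps this safe: it guarantees that after the small one-sided perturbations every arc stays well below half a circle, so all tangent intersections persist and Lemmas~\ref{Lemma:LemmaMain}, \ref{Lemma:LemBij}, and~\ref{Lemma:LemBij2} remain applicable throughout. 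The real analytic difficulty---the continuity of the tangent-segment length as the tangent point slides along the circle, which (unlike the chord length in the inscribed case) cannot be controlled by a crude estimate---has already been isolated and dispatched in Lemma~\ref{Lemma:LemBij2}; granting it, the only remaining care is the bookkeeping of the order of moves so that the contributions telescope.
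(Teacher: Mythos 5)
Your proof follows essentially the same route as the paper's: fix $Q_1=P_1$ and replace the $P_i$ one at a time by nearby points lying on a common regular polygon (equivalently, at rational arc-distances from $Q_1$), invoking Lemma~\ref{Lemma:LemBij2} at each step to control the change in the circumscribed perimeter and then closing the circuit. Your explicit decomposition $\pi(G)=2\sum_j t_{j,j+1}$ into tangent half-lengths and the $\varepsilon/(k-1)$ per-step budget with telescoping make the bookkeeping somewhat more transparent than the paper's inductive formulation, but the underlying argument is the same.
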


\begin{proof}
Set $Q_1$ equal to $P_1$.  Suppose that $(Q_1, Q_2, \dots, Q_{i}, P_{i+1}, \dots, P_{k+1})$ is a sequence of points where each $Q_i$ lies on the arc between $P_i$ and $P_{i+1}$, each $Q_i$ lies on the same regular polygon, each edge is less than a quarter of the circle, and \begin{flalign*}&\dsp \left|\pi(G(P_1, P_2, \dots, P_{i}, P_{i+1}, \dots, P_{k+1}))  \right. &\\ &\dsp \qquad\qquad\qquad\qquad\left.- \pi(G(Q_1, Q_2, \dots, Q_{i}, P_{i+1}, \dots, P_{k+1}))\right|<\varepsilon.&\end{flalign*}  Lemma~\ref{Lemma:LemBij2} implies that there is a $Q_{i+1}$ not equal to $P_{i+1}$ but on the arc between $Q_{i+1}$ and $P_{i+2}$ such that for any $Q$ on the arc between $Q_{i+1}$ and $P_{i+2}$, \begin{flalign*}&\dsp \left|\pi(G(P_1, P_2, \dots, P_{i}, P_{i+1}, \dots, P_{k+1}))  \right. &\\ &\dsp \qquad\qquad\qquad\qquad\left.- \pi(G(Q_1, Q_2, \dots, Q_{i}, Q, P_{i+2}, \dots, P_{k+1}))\right|<\varepsilon.&\end{flalign*}  Take $Q$ to be a vertex of a regular polygon on which $Q_1$ through $Q_i$ lie by choosing the regular polygon to be a sufficiently fine refinement of the regular polygon on which $Q_1$ through $Q_i$ are vertices.  Recursively construct in this way a polygon $G(Q_1, \dots, Q_{k+1})$, with $Q_1$ and $Q_{k+1}$ both equal to $P_{1}$, which approximates $G(P_1, \dots, P_{k+1})$ in the sense that \begin{flalign*}&\dsp \left|\pi(G(P_1, \dots, P_{k+1})) - \pi(G(Q_1,\dots, Q_{k+1}))\right|<\varepsilon.&\end{flalign*}
\end{proof}

\begin{corollary}
If $(P_1, \dots, P_{k+1})$ and $(Q_1,\dots, Q_{l+1})$ are two arbitrary sequences of points on $C$
then \begin{flalign*}&\dsp \min(P_1, \dots, P_{k+1}) > \max(Q_1,\dots, Q_{l+1})&\end{flalign*} implies that \begin{flalign*}&\dsp \pi(G(Q_1,\dots, Q_{l+1})) \leq \pi(G(P_1, \dots, P_{k+1})).&\end{flalign*}
\end{corollary}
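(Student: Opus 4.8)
The plan is to mirror the proof of the preceding corollary for inscribed polygons, replacing the inscribed approximation result Proposition~\ref{Proposition:PropositionFourFour} by its circumscribed analogue Proposition~\ref{Proposition:PropositionFourFive}, and replacing part (1) of Proposition~\ref{Proposition:PropositionFourTwo} by part (2) so as to reverse the relevant inequality. Write $\mathcal P$ for $(P_1, \dots, P_{k+1})$ and $\mathcal Q$ for $(Q_1,\dots, Q_{l+1})$, and suppose $\min(\mathcal P) > \max(\mathcal Q)$. Fix an arbitrary positive real number $\varepsilon$; the whole argument reduces to establishing the single estimate $\pi(G(\mathcal Q)) < \pi(G(\mathcal P)) + \varepsilon$, since letting $\varepsilon \to 0$ then forces $\pi(G(\mathcal Q)) \le \pi(G(\mathcal P))$.

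First I would produce rational approximants controlling the circumscribed perimeters. By Proposition~\ref{Proposition:PropositionFourFive} there are rational circuits $\hat{\mathcal P}$ and $\hat{\mathcal Q}$ with
\[\left|\pi(G(\mathcal P)) - \pi(G(\hat{\mathcal P}))\right| < \tfrac{\varepsilon}{2} \quad\text{and}\quad \left|\pi(G(\mathcal Q)) - \pi(G(\hat{\mathcal Q}))\right| < \tfrac{\varepsilon}{2}.\]
The construction behind that proposition replaces each vertex by a nearby point of $\C$, and Lemma~\ref{Lemma:LemBij2} lets each replacement vertex be chosen as close to the original as one likes. Because the inscribed edge lengths vary continuously with the positions of the vertices and $\min(\mathcal P) > \max(\mathcal Q)$ is a \emph{strict} inequality, I would take the two approximations fine enough that the comparison persists, namely
\[\min(\hat{\mathcal P}) > \max(\hat{\mathcal Q}).\]

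With this in hand I would invoke Proposition~\ref{Proposition:PropositionFourTwo}(2): since $\hat{\mathcal P}$ and $\hat{\mathcal Q}$ are rational circuits satisfying $\min(\hat{\mathcal P}) > \max(\hat{\mathcal Q})$, it yields $\pi(G(\hat{\mathcal Q})) < \pi(G(\hat{\mathcal P}))$. Chaining this with the two approximation estimates gives
\[\pi(G(\mathcal Q)) < \pi(G(\hat{\mathcal Q})) + \tfrac{\varepsilon}{2} < \pi(G(\hat{\mathcal P})) + \tfrac{\varepsilon}{2} < \pi(G(\mathcal P)) + \varepsilon,\]
which is precisely the target estimate, and the corollary follows upon sending $\varepsilon$ to zero.

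The main obstacle is the step of preserving the strict comparison $\min(\hat{\mathcal P}) > \max(\hat{\mathcal Q})$ through the rational approximation; this is the circumscribed counterpart of the ``straightforward details left to the reader'' in the inscribed corollary, and it rests on the fact that the replacement vertices may be taken arbitrarily close to the original ones while still meeting the perimeter tolerance. I would also take care that Proposition~\ref{Proposition:PropositionFourFive} carries the hypothesis that adjacent arcs are less than a fourth of a circle: in keeping with the paper's standing simplifying conventions on circuits, I would simply assume this at the outset (it is no loss for the comparison being proved), rather than attempting to refine $\mathcal P$ or $\mathcal Q$, since inserting points would alter the circumscribed polygons $G(\mathcal P)$ and $G(\mathcal Q)$ themselves.
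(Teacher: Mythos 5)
Your proposal is correct and follows essentially the same route as the paper, which proves this corollary by repeating the argument of the inscribed-polygon corollary verbatim, substituting Proposition~\ref{Proposition:PropositionFourFive} for Proposition~\ref{Proposition:PropositionFourFour} and using the reversed inequality of Proposition~\ref{Proposition:PropositionFourTwo}(2). Your write-up merely makes explicit the $\varepsilon/2$ bookkeeping and the preservation of the strict $\min$/$\max$ comparison that the paper leaves to the reader.
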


\begin{proof}
The above corollary is proved exactly as the first corollary to Proposition~\ref{Proposition:PropositionFourFour} is proved above except that the inequalities are reversed because they are reversed in Proposition~\ref{Proposition:PropositionFourTwo}.
\end{proof}

\begin{theorem}\label{Theorem:TheoremFive}
Suppose that $\P$ is a general partition of $\C$.  For any positive real number $\varepsilon$ there is a positive real number $\delta$ such that \begin{flalign*}&\dsp \mu(\P) < \delta\quad \text{implies that} \quad 0 < 2\pi - \pi(g(\P)) < \varepsilon \quad \text{and} \quad 0 < \pi(G(\P))-2\pi < \varepsilon.&\end{flalign*}
\end{theorem}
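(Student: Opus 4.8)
The plan is to deduce Theorem~\ref{Theorem:TheoremFive} from its rational counterpart, Theorem~\ref{Theorem:TheoremFour}, by using the approximation machinery of this subsection to replace a general partition $\P$ by a nearby rational circuit. It is cleanest to split the four inequalities into two groups: the \emph{strict} bounds $\pi(g(\P)) < 2\pi$ and $\pi(G(\P)) > 2\pi$, which hold for an arbitrary partition and require no smallness of the mesh, and the $\varepsilon$\emph{-estimates} $2\pi - \pi(g(\P)) < \varepsilon$ and $\pi(G(\P)) - 2\pi < \varepsilon$, which do.

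First I would dispose of the strict bounds. Since the edge length of $\g{m}{3}$ tends to $0$ as $m$ grows, I choose $m$ so large that this edge length is smaller than $\min(\P)$, so that the regular circuit $\g{m}{3}$ is finer than $\P$. The corollary to Proposition~\ref{Proposition:PropositionFourFour} then gives $\pi(\g{m}{3}) \ge \pi(g(\P))$, and because $(\pi(\g{m}{3}))$ increases strictly to $2\pi = p_3$, we obtain $\pi(g(\P)) \le \pi(\g{m}{3}) < 2\pi$. Symmetrically, the corollary following Proposition~\ref{Proposition:PropositionFourFive} gives the reverse comparison for circumscribed perimeters, $\pi(\G{m}{3}) \le \pi(G(\P))$, and since $(\pi(\G{m}{3}))$ decreases strictly to $2\pi$, we conclude $\pi(G(\P)) > 2\pi$. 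Note that neither argument uses any bound on $\mu(\P)$.

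For the $\varepsilon$-estimates I would first apply Theorem~\ref{Theorem:TheoremFour} with $\varepsilon/2$ in place of $\varepsilon$ to obtain a threshold $\delta_0$ valid for rational circuits. I then take $\delta$ small enough that $\mu(\P) < \delta$ forces every edge-arc of $\P$ to be less than a quarter of $\C$, so that Proposition~\ref{Proposition:PropositionFourFive} applies, and small enough that the rational circuit produced below has mesh below $\delta_0$. Given such a $\P$, Proposition~\ref{Proposition:PropositionFourFour} furnishes a rational circuit $\hat{\P}$ with $|\pi(g(\P)) - \pi(g(\hat{\P}))| < \varepsilon/2$; since that construction moves each vertex to a vertex of an arbitrarily fine regular polygon, each edge of $\hat{\P}$ differs from the corresponding edge of $\P$ by less than twice that polygon's edge length, so $\hat{\P}$ can simultaneously be taken with $\mu(\hat{\P}) < \delta_0$. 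Theorem~\ref{Theorem:TheoremFour} then yields $2\pi - \pi(g(\hat{\P})) < \varepsilon/2$, and the triangle inequality gives $2\pi - \pi(g(\P)) < \varepsilon$. The circumscribed estimate is obtained identically, invoking Proposition~\ref{Proposition:PropositionFourFive} in place of Proposition~\ref{Proposition:PropositionFourFour}.

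The main obstacle is the bookkeeping for the approximating rational circuit, which must be close to $\P$ in perimeter \emph{and} have mesh below $\delta_0$ \emph{and}, in the circumscribed case, inherit the quarter-circle condition needed for Proposition~\ref{Proposition:PropositionFourFive}. All of these are controlled by the single refinement parameter of the regular polygon onto which the vertices are relocated, so that $\mu(\hat{\P})$ can be forced below $\mu(\P)$ plus an arbitrarily small amount; coordinating these three requirements and the choice of $\delta$ is the only genuinely delicate step, and the remainder is the triangle inequality together with the already-established strict bounds.
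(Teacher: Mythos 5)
Your proof is correct, and its first half (the strict bounds $\pi(g(\P))<2\pi$ and $\pi(G(\P))>2\pi$ via comparison with a sufficiently fine $\g{m}{3}$ and $\G{m}{3}$) is exactly what the paper does: the paper's proof of Theorem~\ref{Theorem:TheoremFive} simply reruns the proof of Theorem~\ref{Theorem:TheoremFour}, replacing the rational comparison of Proposition~\ref{Proposition:PropositionFourTwo} by the corollaries to Propositions~\ref{Proposition:PropositionFourFour} and~\ref{Proposition:PropositionFourFive} on both sides of the sandwich. Where you diverge is in the $\varepsilon$-estimates: the paper gets $2\pi-\pi(g(\P))<\varepsilon$ directly from the same comparison corollary, by choosing $m^\prime$ with $2\pi-\pi(\g{m^\prime}{3})<\varepsilon$ and $\delta$ smaller than the side length of $\g{m^\prime}{3}$, so that $\pi(g(\P))\geq\pi(\g{m^\prime}{3})$; no approximating rational circuit is ever constructed. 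Your route instead perturbs $\P$ to a rational circuit $\hat\P$ and invokes Theorem~\ref{Theorem:TheoremFour} plus the triangle inequality. This is valid, but it costs you something the paper's route avoids: Propositions~\ref{Proposition:PropositionFourFour} and~\ref{Proposition:PropositionFourFive} as stated only control the \emph{perimeter} of the approximant, not its mesh, so you must reopen their proofs to check that $\mu(\hat\P)$ can simultaneously be kept below $\delta_0$ (and, for the circumscribed case, that the quarter-circle hypothesis survives). You correctly identify and discharge this bookkeeping, but the paper's direct sandwich needs only the corollaries as stated and is the leaner argument; yours has the mild advantage of quoting Theorem~\ref{Theorem:TheoremFour} as a black box rather than repeating its proof.
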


\begin{proof}
The theorem is proved in the same way as Theorem~\ref{Theorem:TheoremFour} but by appealing to Proposition~\ref{Proposition:PropositionFourTwo} and the corollary to Proposition~\ref{Proposition:PropositionFourFive} rather than Proposition~\ref{Proposition:PropositionFourTwo} and the corollary to Proposition~\ref{Proposition:PropositionFourFour}.
\end{proof}

Let $\P$ be a general partition of $\C$.  Denote by $\alpha(g(\P))$ the area of the inscribed polygon $g(\P)$ and $\alpha(G(\P))$ the area of the circumscribed polygon $G(\P)$.

\begin{theorem}
For any positive real number $\varepsilon$ there is a  positive real number $\delta$ such that  \begin{flalign*}&\dsp \mu(\P) < \delta\quad \text{implies that} \quad 0 < \pi - \alpha(g(\P)) < \varepsilon \quad \text{and} \quad 0 < \alpha(G(\P))-\pi < \varepsilon.&\end{flalign*}
\end{theorem}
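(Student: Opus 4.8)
The plan is to reduce both statements to the perimeter convergence already established in Theorem~\ref{Theorem:TheoremFive}, by relating the area of each polygon to its perimeter through the elementary triangle decomposition about the center $O$. Since every edge of a circumscribed polygon is tangent to $\C$, its perpendicular distance from $O$ equals the radius $1$; decomposing the convex polygon $G(\P)$ into the triangles spanned by $O$ and its edges gives the \emph{exact} identity
\[
\alpha(G(\P)) = \tfrac{1}{2}\pi(G(\P)),
\]
with no error term. Thus the circumscribed case is immediate: applying Theorem~\ref{Theorem:TheoremFive} with $2\varepsilon$ in place of $\varepsilon$ and halving the resulting bound $0 < \pi(G(\P)) - 2\pi < 2\varepsilon$ yields $0 < \alpha(G(\P)) - \pi < \varepsilon$ for the same $\delta$.

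For the inscribed polygon the analogous decomposition is only approximate. Writing $\ell_i = \ell(\overline{P_iP_{i+1}})$, the triangle $\triangle P_iOP_{i+1}$ has base $\ell_i$ and height $\sqrt{1 - \tfrac14\ell_i^2}$ (half the chord has length $\ell_i/2$ and the radius is $1$), so
\[
\alpha(g(\P)) = \tfrac{1}{2}\sum_i \ell_i\sqrt{1 - \tfrac{1}{4}\ell_i^2}.
\]
Comparing with $\tfrac12\pi(g(\P)) = \tfrac12\sum_i \ell_i$ and using the concavity estimate $1 - \sqrt{1-x} \le x$ for $x\in[0,1]$ with $x = \ell_i^2/4$, I would bound the defect by
\[
0 < \tfrac{1}{2}\pi(g(\P)) - \alpha(g(\P)) \le \tfrac{1}{8}\sum_i \ell_i^3 \le \tfrac{1}{8}\mu(\P)^2\,\pi(g(\P)) < \tfrac{\pi}{4}\mu(\P)^2,
\]
the last step using $\pi(g(\P)) < 2\pi$. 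Since also $\alpha(g(\P)) < \tfrac12\pi(g(\P)) < \pi$, the quantity $\pi - \alpha(g(\P))$ is positive, and it splits as $\bigl(\pi - \tfrac12\pi(g(\P))\bigr) + \bigl(\tfrac12\pi(g(\P)) - \alpha(g(\P))\bigr)$.

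To finish, given $\varepsilon > 0$ I would first use Theorem~\ref{Theorem:TheoremFive} to choose $\delta_1$ forcing $0 < \pi - \tfrac12\pi(g(\P)) < \varepsilon/2$, then shrink to $\delta = \min\{\delta_1,\ \sqrt{2\varepsilon/\pi}\}$ so that the defect term $\tfrac{\pi}{4}\mu(\P)^2 < \varepsilon/2$ as well; adding the two estimates gives $0 < \pi - \alpha(g(\P)) < \varepsilon$. The only real work lies in the inscribed half, and specifically in controlling $\sum_i \ell_i^3$: the point is that the extra factor $\mu(\P)^2$ produced by the estimate on $1 - \sqrt{1-x}$ is exactly what converts the bounded perimeter $\sum_i \ell_i$ into a quantity that vanishes with the mesh. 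A minor point to verify is that the triangle decomposition about $O$ is legitimate, i.e.\ that $O$ lies inside each polygon and the triangles tile it; this follows from the standing assumption that every subtending arc is less than half of $\C$ (so the circumscribed polygon is convex and both polygons contain $O$), exactly as in the regular case treated in Section~2.
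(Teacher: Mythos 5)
Your proposal is correct and follows essentially the same route as the paper: both cases are reduced to the perimeter convergence of Theorem~\ref{Theorem:TheoremFive}, using the exact identity $\alpha(G(\P)) = \tfrac{1}{2}\pi(G(\P))$ for the circumscribed polygon and the formula $\alpha(g(\P)) = \tfrac{1}{2}\sum_i \ell_i\sqrt{1-\ell_i^2/4}$ (which the paper obtains via Heron's theorem) together with the bound $1-\sqrt{1-x}\le x$ to control the inscribed defect by $\tfrac{\pi}{4}\mu(\P)^2$, exactly as in the paper's proof. Your explicit remarks on positivity and on the legitimacy of the triangle decomposition about $O$ are welcome additions but do not change the argument.
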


\begin{proof}
Let $\varepsilon$ be a positive real number.  Theorem~\ref{Theorem:TheoremFive} implies that there is a positive real number $\delta$ so that if $\P$ is a circuit $(P_1, \dots, P_{n+1})$ with mesh less than $\delta$, then \begin{flalign}\label{5:ineq:Gpi}&\dsp  0 < 2\pi - \pi(g(\P)) < \varepsilon \quad \text{and} \quad 0 < \pi(G(\P))-2\pi < \varepsilon.&\end{flalign}  Restrict $\delta$ so that $\frac{\pi}{4}\delta^2$ is less than $\varepsilon$. The equality \begin{flalign*}&\dsp \alpha(G(\P)) = \dfrac{1}{2}\pi(G(\P))&\end{flalign*} together with \eqref{5:ineq:Gpi} implies that \begin{flalign*}&\dsp 0 < \alpha(G(\P))-\pi = \dfrac{1}{2}\pi(G(\P)) - \pi < \pi + \frac{\varepsilon}{2} - \pi = \frac{\varepsilon}{2}.&\end{flalign*}  Denote by $\ell_i$ the length of the segment $\overline{P_iP_{i+1}}$. Heron's Theorem implies that \begin{flalign*}&\dsp \alpha(g(\P)) = \sum_{i=1}^n \dfrac{\ell_i}{2}\sqrt{1 - {\dfrac{\ell_i^2}{4}}}.&\end{flalign*}   For each $i$, \begin{flalign*}&\dsp \dfrac{\ell_i}{2}\left(1 - \dfrac{\mu(\P)^2}{4}\right) \leq \dfrac{\ell_i}{2}\left(1 - \dfrac{\ell_i^2}{4}\right) \leq \dfrac{\ell_i}{2}\sqrt{1 - {\dfrac{\ell_i^2}{4}}} \leq \dfrac{\ell_i}{2}.&\end{flalign*}  Since the sum of all of the $\ell_i$'s is the perimeter of an inscribed polygon and therefore less than $2\pi$, the bound on $\delta$ implies that  \begin{flalign*}&\dsp \dfrac{\pi(g(\P))}{2} - \frac{\varepsilon}{2}  <  \dfrac{\pi(g(\P))}{2} -  \dfrac{\mu(\P)^2}{8}\sum_{i=1}^n \ell_i \leq \alpha(g(\P)) \leq \dfrac{\pi(g(\P))}{2}.&\end{flalign*}  This proves the theorem.
\end{proof}

\section{Application to the Circular Trigonometric Functions}

\subsection{Arclength of an Arc and Area of a Sector}

Suppose that $A$ and $B$ are points on $\C$ and that $\A$ is the counterclockwise oriented arc from $A$ to $B$.  If the length $\ell(\overline{AB})$ is a rational length, then for some natural number $m$, there is a regular inscribed $n$-gon $\g{0}{n}$ such that $\A$ has on it exactly $k+1$ vertices of $\g{0}{n}$ and both $A$ and $B$ are vertices of $\g{0}{n}$.  Denote by $\ell_n$ the edge length of $\g{0}{n}$ to obtain the equality \begin{flalign*}&\dsp k\ell_n = \frac{k}{n}\pi(\g{0}{n}).&\end{flalign*}  The inscribed polygon $\g{m}{n}$ will have $2^mn+1$ vertices on $\A$.  Denote by $\pi_{m,n}(\A)$ the approximation of the arclength of $\A$ given by  \begin{flalign*}&\dsp\pi_{m,n}(\A) = 2^mk\ell_n = \frac{2^mk}{2^mn}\pi(\g{m}{n}) = \frac{k}{n}\pi(\g{m}{n}) \to \frac{2\pi k}{n}\quad \text{as}\quad m\to \infty.&\end{flalign*} This approximation of the length of an arc of $\C$ depends only on the fraction of $\C$ that $\A$ represents and so defines the length of an arc for any arc whose endpoints form a rational length.  Define similarly the area of a sector of $\C$, the area bounded by the lines $\overline{OA}$ and $\overline{OB}$ and the counterclockwise oriented arc $\A$, so that  \begin{flalign*}&\dsp\alpha_{m,n}(\A) = \frac{k}{n}\alpha(\g{m}{n}) \to \frac{\pi k}{n}\quad \text{as}\quad m\to \infty.&\end{flalign*}

If the length $\ell(\overline{AB})$ is not a rational length, then there is a sequence of rational lengths that approximate it.  In particular, take $A$ to be a point on $\g{0}{n}$ for some $n$ where $n$ is a large enough natural number so that at least one vertex of $\g{0}{n}$ lies on $\A$ between $A$ and $B$.  For each $m$, choose the vertex $B_m$ of $\g{m}{n}$ that is closest to $B$.  The counterclockwise oriented arc $\A_m$ from $A$ to $B_m$ will have rational length $\ell(\overline{AB_m})$ and will be a fraction $\phi(AB_m)$ of a circle, where $\phi(AB_m)$ is an increasing bounded above sequence with \begin{flalign*}&\dsp\lim_{m\to \infty} \phi(AB_m) = \phi(AB),&\end{flalign*} where $\phi(AB)$ is an irrational number.  Obtain as $m$ tends to infinity the limits \begin{flalign*}&\dsp\pi_{m,n}(\A_m) = 2\pi \phi(AB_m) \to 2\pi \phi(AB) \quad {\rm and}\quad \alpha_{m,n}(\A_m) = \pi\phi(AB_m) \to \pi\phi(AB_m),&\end{flalign*} where the limits are independent of the sequence $(B_m)$ that approximates $B$.  

\medskip

Using the notation above and given any counterclockwise arc $\A$ from a point $X$ on $\C$ to a point $Y$ on $\C$, define the angle measure $\theta(\A)$ by  \begin{flalign*}&\dsp\theta(\A) = 2\pi\phi(XY).&\end{flalign*} The length $\theta$ is the length of the arc $\A$.  The area, $\alpha(\A)$ of the sector bounded by $\overline{OX}$, $\overline{OY}$, and the counterclockwise oriented arc $\A$ is given by \begin{flalign*}&\dsp\alpha(\A) = \frac{\theta(\A)}{2}.&\end{flalign*}

\subsection{Limit of the Sine Function}

The argument frequently given by authors of calculus texts for calculating the limit \eqref{limsinxoverx} is perfectly valid, although it does require some explanation since we have not yet introduced a coordinate system nor defined the trigonometric functions.  View the unit circle as the subset $\C$ of the plane given by \begin{flalign*}&\dsp\C = \{(x,y)\colon x^2 + y^2 = 1\}.&\end{flalign*}  Define the trigonometric functions, as is customary, as functions of the argument $\theta$ in $[0, 2\pi)$, so that the point $(\cos(\theta), \sin(\theta))$ is the point on the unit circle so that the counterclockwise oriented arc $\A$ from $(1,0)$ to $(\cos(\theta), \sin(\theta))$ has length $\theta$.  The area of the sector defined by $\A$ is $\frac{\theta}{2}$. Take $\theta$ to be less that $\frac{\pi}{2}$.  Bound the area of the sector above and below respectively by the areas of the triangles \[\triangle (0,0)(1,0)\Big(1, \frac{\sin(\theta)}{\cos(\theta)}\Big)\quad {\rm and}\quad \triangle(0,0)(\cos(\theta),0)(\cos(\theta),\sin(\theta))\] to obtain the inequality \begin{flalign*}&\dsp \frac{1}{2}\cos(\theta)\sin(\theta) \leq \frac{1}{2}\theta\leq \frac{1}{2}\frac{\sin(\theta)}{\cos(\theta)} \quad \text{implying that}\quad 1 \leq \frac{\theta}{\sin(\theta)} \leq \frac{1}{\cos(\theta)}.&\end{flalign*} Take limits and use the sandwich theorem to obtain the limit \eqref{limsinxoverx} as a one sided limit.  If $(x,y)$ is a point in the first quadrant so that the signed length of the arc from $(1,0)$ to $(x,y)$ is $\theta$, then define the signed length of the arc from $(1,0)$ to $(x,-y)$ to be $-\theta$.  Given this signed argument, the sine function is an odd function of $\theta$ and the cosine function is even.  Extending the definitions of the trigonometric functions in this way to negative signed arc lengths permits the limit \eqref{limsinxoverx} to be viewed as a two sided limit and the oddness of the sine function implies this two sided limit.  While this is a standard approach to calculating the limit in many calculus texts, \cite{Ste} for example, the approach is non-rigorous because these texts omit a discussion of the equivalence of $\pi$ defined as half the circumference of a unit circle and as the area of a unit circle and so as well the relationship between the length of an arc and the area of the sector that the arc defines.  Our current discussion closely follows Archimedes and remedies this shortcoming.


\begin{thebibliography}{0}
%

\bibitem{Apostol} Apostol, T.\,M.: \textsl{Calculus}. Blaisdell, Waltham, Mass., (1967).

\bibitem{Heath} Heath, T.\,L.: \textsl{The Works of Archimedes}, Cambridge University Press (1897).

\bibitem{Rich93} Richman, F.: \textsl{A Circular Argument}. The College Mathematics Journal, Vol. 24, No. 2. (Mar., 1993), pp. 160-162.

\bibitem{Ste} Stewart, J.: \textsl{Calculus}. Brooks Cole; 6 edition (2007).

\bibitem{Unger} Unger, P.: Reviews, American Mathematical Monthly 93 (1986) 221--230.

\bibitem{Vietoris} Vietoris, L.: \textsl{Vom Grenzwert lim sin x}. Elem. Math. 12 (1957), pp. 8--10.

\bibitem{Zeisel} Zeisel, H.: \textsl{$\displaystyle \lim_{x\to0} \dfrac{\sin x}{x}$ and the definition of $\pi$}. eprint arXiv:1302.1167, Bibcode:  2013arXiv1302.1167Z.

\end{thebibliography}
\end{document}